\documentclass[3p]{elsarticle}
\usepackage{amsmath}
\usepackage{amssymb}
\usepackage{float}
\usepackage{amsfonts}
\usepackage{mathrsfs}
\usepackage{tikz}
\usepackage{tikz-cd}
\usepackage{booktabs}
\usepackage{multirow}
\usepackage{siunitx}
\usepackage{doi}
\usepackage{listings}
\usepackage{alltt}
\usepackage{tabularx}
\usepackage[font=small,labelfont=bf, hypcap=false]{caption}
\usepackage{amsthm}
\usepackage{hyperref}
\usepackage{enumitem}
\usepackage{graphicx}
\usepackage[title]{appendix}
\usepackage{xcolor}
\usepackage{textcomp}
\usepackage{manyfoot}
\usepackage{algorithm}
\usepackage{algorithmicx}
\usepackage{algpseudocode}
\usepackage{bookmark}
\usepackage{anyfontsize}
\usepackage{rotating}
\usepackage[utf8]{inputenc}
\usepackage[english]{babel}
\usepackage{array}

\usepackage[utf8]{inputenc}
\usepackage{amsmath,amssymb,amsthm,mathtools}
\usepackage{graphicx}
\usepackage{booktabs}
\usepackage{enumitem}
\usepackage{microtype}
\usepackage{xcolor}
\usepackage[capitalise,nameinlink,noabbrev]{cleveref}

\newtheorem{theorem}{Theorem}[section]
\newtheorem{lemma}[theorem]{Lemma}
\newtheorem{proposition}[theorem]{Proposition}
\newtheorem{corollary}[theorem]{Corollary}
\theoremstyle{definition}
\newtheorem{definition}[theorem]{Definition}
\newtheorem{example}[theorem]{Example}
\theoremstyle{remark}
\newtheorem{remark}[theorem]{Remark}
\numberwithin{equation}{section}

\newcommand{\R}{\mathbb{R}}
\newcommand{\N}{\mathbb{N}}
\newcommand{\D}{D_{q,\omega}}
\newcommand{\wo}{\omega_0}
\newcommand{\Is}{I^{*}}
\newcommand{\qn}[1]{[#1]_q}
\newcommand{\cl}{\mathrm{cl}}
\newcommand{\SO}{\mathrm{SO}(3)}
\DeclareMathOperator{\sgn}{sgn}
\newcommand{\abs}[1]{\lvert #1\rvert}
\newcommand{\norm}[1]{\lVert #1\rVert}

\begin{document}
    \title{Space curves in Hahn calculus: rotation structure, a fundamental theorem, and $q$-deformed invariants at the fixed point}
    
        \author[Umut]{Umut Selvi}
        \author[Umut]{Derya Sağlam\corref{cor1}}
        \ead{derya.saglam@hbv.edu.tr} 
            \cortext[cor1]{Corresponding author}

    \address[Umut]{Department of Mathematics, Ankara Hacı Bayram Veli University, Ankara, 06900, Türkiye}
    
    \begin{abstract}
       We develop the differential geometry of space curves in Hahn's quantum calculus, where the derivative is the difference quotient along the contraction $\sigma(t)=qt+\omega$ with fixed point $\omega_0=\omega/(1-q)$. The Hahn Frenet frame is transported along every orbit of $\sigma$ by rotations through turning and torsion angles, which identifies it with the discrete Frenet frame of the inscribed orbit polygon. An embedded regular Hahn curve is determined, up to linear reparametrization about $\omega_0$, by its trace and the induced contraction. We prove a sharp planarity criterion and a fundamental theorem via an infinite product of rotations and the Jackson--N\"orlund integral. For smooth curves, expansions uniform in $q$ show that at $\omega_0$ the Hahn curvature and torsion carry the factors $[2]_q/2$ and $[3]_q/3$. These factors, a sampling effect of geometrically decaying steps, disappear under edge-length normalization, but their quotient persists in the limiting ratio of torsion and turning angles of every polygon inscribed in a smooth curve with monotonically converging vertices and edge lengths decaying with ratio~$q$. In uniformly sampled relaxation towards a stable node, the factors can become measurable biases. We characterize generalized Hahn helices by a two-step relation between turning and torsion angles; their Lancret ratio at $\omega_0$ is $\pm([2]_q/2)\cot\vartheta$, where $\vartheta$ is the helix angle. Unlike the classical case, a constant ratio of torsion and curvature does not characterize helices: curves with constant curvature and torsion are never generalized Hahn helices, and, near $\omega_0$, unit-speed helices with a given ratio are unique up to similarities and linear reparametrization.
    \end{abstract}

    \begin{keyword}
        Hahn difference operator; Quantum calculus; Space curves; Discrete Frenet frame; Fundamental theorem of curves; Lancret theorem \\
        \MSC[2020] 53A04; 39A13; 53A70
    \end{keyword}

    \maketitle
	
    \section{Introduction}\label{sec:intro}

The Hahn difference operator
\[
  \D f(t)=\frac{f(qt+\omega)-f(t)}{(q-1)t+\omega},\qquad 0<q<1,\ \omega\in\R,
\]
was introduced by Hahn~\cite{Hahn1949} in his work on orthogonal polynomials. It contains the Jackson $q$-difference operator~\cite{Jackson1908} as the case $\omega=0$, and it formally reduces to the forward difference operator for $q=1$ and to the classical derivative as $q\to1$, $\omega\to0$. Its systematic theory, including the Jackson--N\"orlund integral as a right inverse of $\D$, was developed by Annaby, Hamza and Aldwoah~\cite{Aldwoah2009,AnnabyHamzaAldwoah2012}, and it has since become a substantial part of quantum calculus~\cite{KacCheung2002}: linear Hahn difference equations~\cite{HamzaAhmed2013}, the Leibniz rule and Fubini's theorem~\cite{HamzaMakharesh2016}, the Hahn quantum variational calculus~\cite{MalinowskaTorres2010}, general quantum difference operators containing $\D$~\cite{HamzaSarhanShehataAldwoah2015}, fractional Hahn calculus and boundary value problems~\cite{BrikshavanaSitthiwirattham2017,Sitthiwirattham2016,PatanarapeelertReunsumritSitthiwirattham2024}, Appell-type polynomials~\cite{OzarslanMalyali2025} and Nevanlinna theory~\cite{WangLong2025}. The Hahn operator fits into Aldwoah's framework of generalized time scales~\cite{Aldwoah2009}; on each orbit of $\sigma(t)=qt+\omega$ it acts as a forward or a backward difference operator in the sense of the calculus on time scales~\cite{BohnerPeterson2001}.

Geometric applications are much scarcer. Curves parametrized by time scales were studied by Guseinov and \"Ozy\i lmaz~\cite{GuseinovOzyilmaz2001}, \"Ozy\i lmaz~\cite{Ozyilmaz2006}, Atmaca~\cite{Atmaca2010}, Seyyido\u{g}lu et al.~\cite{Seyyidoglu2011} and Atmaca and Akg\"uller~\cite{AtmacaAkguller2015}, and Frenet--Serret formulas for curves defined by the Jackson $q$-derivative, which corresponds to $\omega=0$ in the present setting, were considered by Altun and Y\"uce~\cite{AltunYuce2019}. On the other hand, the differential geometry of polygons is a mature subject~\cite{BobenkoSuris2008,CarrollHankinsKoseSterling2014,Sauer1970}: discrete curvature and torsion are defined through turning and torsion angles, the discrete Frenet frame is transported from vertex to vertex by rotations~\cite{HuLundgrenNiemi2011}, a polygon is reconstructed from its invariants by discrete Frenet equations~\cite{CarrollHankinsKoseSterling2014}, discrete curve flows are linked to integrable systems~\cite{InoguchiKajiwaraMatsuuraOhta2012}, and polygonal general helices have recently been characterized by finite-step relations between turning and torsion angles~\cite{YoonLeeLee2026}. Frames along discrete curves are also used in applications, for instance in the analysis of protein backbones~\cite{HuLundgrenNiemi2011} and in computer graphics~\cite{WangJuttlerZhengLiu2008}.

The Hahn setting lies between these two worlds. For $t\neq\wo$, where $\wo=\omega/(1-q)$ is the fixed point of $\sigma(t)=qt+\omega$, the operator involves only the two points $t$ and $\sigma(t)$. The orbit $t,\sigma(t),\sigma^2(t),\dots$ converges geometrically to $\wo$, the parameter steps being $\delta(\sigma^n(t))=q^n\delta(t)$ with $\delta(t)=\sigma(t)-t$; for instance, the impact times of a ball bouncing with coefficient of restitution $q$ form such an orbit, which accumulates at a finite time (Zeno behaviour)~\cite{JohanssonEgerstedtLygerosSastry1999}. Along each orbit a Hahn curve is therefore a polygon whose edges shrink geometrically, whereas at $\wo$ the operator is, by definition, the classical derivative. A Hahn curve is thus a one-parameter family of polygons glued together at a point at which the structure becomes infinitesimal. The aim of this paper is to develop a Frenet theory that uses both aspects and to isolate phenomena that belong to the Hahn setting itself rather than to its two limiting cases.

Our first result makes this picture precise at the level of frames. Let $\gamma\colon I\to\R^3$ be a Frenet-regular Hahn curve (\cref{def:frenetregular}) on an interval $I\ni\wo$, with Hahn unit tangent $T=\D\gamma/\abs{\D\gamma}$, Hahn Frenet frame $(T,N,B)$, Hahn curvature $\kappa=\D T\cdot N$, Hahn torsion $\tau=-\D B\cdot N$ and tangential component $\beta=\D T\cdot T$, and let $F$ be the matrix with rows $T$, $N$, $B$. For $t\neq\wo$ we introduce the turning angle $\theta(t)$ from $T(t)$ to $T(\sigma(t))$ and the torsion angle $\varphi(t)$ from $B(t)$ to $B(\sigma(t))$. In \cref{thm:A} we show that the frame is transported along every orbit by an explicit rotation,
\[
  F(\sigma(t))=X(\varphi(t))\,Z(\theta(t))\,F(t)\qquad(t\neq\wo),
\]
where $Z(\theta)$ and $X(\varphi)$ are the rotations about the third and the first coordinate axis, acting on the rows of $F$: the frame is first rotated through the turning angle about the binormal and then through the torsion angle about the new tangent. Consequently $\delta\kappa=\sin\theta$ and $\delta\tau=\cos\theta\sin\varphi$, and the condition $\delta^2(\kappa^2+\tau^2)<1$, which is needed to recover the frame from the curvature and the torsion, holds if and only if $\cos\theta\cos\varphi\neq0$ (\cref{cor:relations}).

The parametrization of a regular Hahn curve is essentially rigid. The only reparametrizations compatible with the Hahn structure are the linear changes of parameter about $\wo$ (\cref{prop:reparam}), and a regular Hahn curve that is a topological embedding is determined, up to such a change, by its trace together with the contraction that $\sigma$ induces on it (\cref{rem:coordfree}); conversely, every sufficiently smooth increasing contraction of an embedded arc towards a point at which its derivative equals $q$ arises in this way, the Hahn parameter being its Koenigs coordinate (\cref{prop:coordfree}). In particular, unit Hahn speed is a genuine restriction rather than a normalization. \Cref{prop:coordfree} also shows that Hahn curves describe discrete-time dynamics along curves: if a map of $\R^3$ leaves an arc through a fixed point invariant and restricts to such a contraction on it, the iterates of each point of the arc are the vertices of an orbit polygon of the corresponding Hahn curve (\cref{sec:conclusion}). For instance, if a trajectory of an analytic flow converges to a non-resonant stable node tangentially to the slowest eigendirection, then its samples at uniform time steps are, from some index on, the vertices of an orbit polygon of a Hahn curve in Jackson's calculus whose parameter is the linearizing coordinate of the slowest mode (\cref{ex:relaxation}).

By \cref{thm:A}, along each orbit the Hahn Frenet frame is, up to the indexing convention, the discrete Frenet frame of the orbit polygon~\cite{CarrollHankinsKoseSterling2014,HuLundgrenNiemi2011}, and we use this connection throughout. The contribution of the paper does not lie in the algebra along a single orbit, which is that of polygons and of curves parametrized by time scales (\cref{rem:polygons}), but in the features that are specific to the Hahn setting: the geometric decay of the parameter steps, the gluing of all orbits at $\wo$ together with the continuity of the frame on $I$, and the normalization of the invariants by the parameter step.

These features govern the existence and uniqueness theory. In \cref{thm:B,thm:Bangle} we prove a fundamental theorem: continuous functions $\kappa>0$ and $\tau$ with $\delta^2(\kappa^2+\tau^2)<1$ or, without any such restriction, suitable continuous turning and torsion angles determine a unit-speed Hahn curve uniquely up to a proper rigid motion. The frame is obtained as an infinite product of rotations along the orbits and the curve as a Jackson--N\"orlund integral, and the natural initial condition is imposed at the fixed point, where the Hahn Frenet equations reduce to the classical ones (\cref{prop:fixedpoint}). Uniqueness rests on the continuity of the frame, which couples the different orbits (\cref{rem:orbits,rem:speed}). We also obtain a planarity criterion and show by an example that its non-degeneracy hypothesis cannot be dropped (\cref{thm:planar,rem:necessity}). As $q\to1$ with $\wo$ fixed, the curves given by \cref{thm:B} converge to the classical curve with the same curvature and torsion (\cref{prop:qto1}).

For a smooth curve sampled on the Hahn grid, the Hahn invariants can be compared with the classical ones. Let $\gamma$ be a regular $C^4$ curve with non-vanishing curvature, classical speed $s_{\cl}$, curvature $\kappa_{\cl}$ and torsion $\tau_{\cl}$, and let $\kappa_q$ and $\tau_q$ denote its Hahn curvature and torsion for the parameter $q\in(0,1)$, the fixed point $\wo$ being the same for all $q$. In \cref{thm:C} we prove first-order expansions that are uniform in $q$: on every compact interval containing $\wo$,
\[
  \kappa_q=\frac{\qn{2}}{2}\,s_{\cl}\kappa_{\cl}+O(\abs{\delta}),\quad
  \tau_q=\frac{\qn{3}}{3}\,s_{\cl}\tau_{\cl}+O(\abs{\delta}),\quad \qn{n}=1+q+\dots+q^{n-1},
\]
for $\abs{\delta(t)}$ sufficiently small, with constants that do not depend on $q$. These expansions contain the classical limit $q\to1$ (\cref{cor:classical}) as well as the limit at the fixed point for fixed $q$ (\cref{cor:fixedpoint}): at $\wo$ the Hahn curvature and torsion equal $s_{\cl}\kappa_{\cl}$ and $s_{\cl}\tau_{\cl}$ multiplied by $\qn{2}/2$ and $\qn{3}/3$, respectively. These factors arise because the Hahn derivative divides by the parameter step $\delta$, whereas the tangent and the binormal, which are built from two and from three consecutive points of an orbit, approximate the classical ones at the centroids of their parameters, and consecutive centroids are $\frac{\qn{2}}{2}\delta$ and $\frac{\qn{3}}{3}\delta$ apart (\cref{rem:qnumbers}). Dividing the angles by the mean length of two or three consecutive edges of the orbit polygons removes the factors (\cref{cor:edge}). Their quotient, however, is a property of the polygons themselves: for every polygon inscribed in such a curve, with vertices $\gamma(x_n)$ for a strictly monotone sequence $x_n\to x^*$ and edge lengths $\ell_n$ with bounded ratios, the ratio $\varphi_n/\theta_n$ of torsion and turning angles equals $\frac23(\ell_n+\ell_{n+1}+\ell_{n+2})/(\ell_n+\ell_{n+1})$ times $\tau_{\cl}(x^*)/\kappa_{\cl}(x^*)$ up to $O(\abs{x_n-x^*})$. It therefore tends to $2\qn{3}/(3\qn{2})$ times $\tau_{\cl}(x^*)/\kappa_{\cl}(x^*)$ if the edge lengths decay with ratio $q$, and to $\tau_{\cl}(x^*)/\kappa_{\cl}(x^*)$ for edges of asymptotically equal length (\cref{prop:polygons}). In \cref{ex:relaxation} these factors appear as biases of discrete estimates of curvature and torsion for a uniformly sampled trajectory of the Lorenz system near a stable node, where $q$ is determined by the time step and the slowest decay rate.

Finally, we study generalized Hahn helices, whose tangent makes a constant angle $\vartheta\in(0,\pi/2)$ with a fixed axis $u$. In \cref{thm:D} we prove an intrinsic Lancret-type theorem: a Frenet-regular Hahn curve is such a helix with $B\cdot u\neq0$ if and only if $\abs{\theta}<2\vartheta$ and an explicit relation between $\theta(t)$, $\theta(\sigma(t))$ and $\varphi(t)$ holds for all $t\neq\wo$, with a fixed sign $\varepsilon$. This relation involves only two consecutive steps of each orbit polygon and makes no reference to the axis. For each sign $\varepsilon$, the unit-speed helices of this kind with $\sgn(B\cdot u)=\varepsilon$ correspond bijectively, up to proper rigid motions, to their turning-angle functions (\cref{cor:allhelices}). The binormal of every such helix is differentiable at $\wo$, and the Lancret ratio there is $q$-deformed: $\tau(\wo)/\kappa(\wo)=\varepsilon\,\frac{\qn{2}}{2}\cot\vartheta$, and the ratio of the edge-normalized invariants of \cref{cor:edge} also has a $q$-deformed limit at $\wo$ (\cref{cor:lancretratio}). On each orbit, the characterization above is the counterpart of the conserved-vector characterization and of the phase formulation of polygonal general helices obtained recently by Yoon, Lee and Lee~\cite{YoonLeeLee2026}. What is specific to the Hahn setting is the $q$-deformation of the Lancret ratio, which does not occur for uniform steps~\cite[Proposition~5.5]{YoonLeeLee2026}, and the role of continuity: since the frame is continuous on $I$, the sign of $B\cdot u$ is constant and the axes of all orbits coincide, so that a single two-step relation suffices and the turning angles determine the helix, whereas for polygons given turning angles and helix angle in general admit several non-congruent helices (\cref{rem:polygonhelices}). Moreover, the classical characterization of general helices by a constant ratio $\tau/\kappa$ fails: Hahn curves with constant curvature and torsion are never generalized Hahn helices, and near $\wo$ the unit-speed generalized Hahn helices with a given constant ratio $\tau/\kappa$ are unique up to similarities and linear changes of parameter about $\wo$, their turning angle being determined by the Koenigs function of an explicit map (\cref{prop:constratio}).

The paper is organized as follows. \Cref{sec:prelim} collects the necessary facts on the Hahn operator and the Jackson--N\"orlund integral. \Cref{sec:curves} introduces Hahn curves and their Frenet apparatus, establishes the rotation structure and determines the admissible reparametrizations. \Cref{sec:fundamental} contains the fundamental theorem and the planarity criterion, \cref{sec:limits} the asymptotic results, and \cref{sec:helices} the theory of generalized Hahn helices. \Cref{sec:examples} presents high-precision numerical examples that illustrate the results, including uniformly sampled relaxation towards a stable node, and \cref{sec:conclusion} contains concluding remarks.

\section{Preliminaries on Hahn calculus}\label{sec:prelim}

Throughout, $q\in(0,1)$ and $\omega\in\R$ are fixed, and
\begin{gather*}
  \sigma(t)=qt+\omega,\qquad \delta(t)=\sigma(t)-t,\qquad \wo=\frac{\omega}{1-q},\\
  \qn{n}=\frac{1-q^n}{1-q}=1+q+\dots+q^{n-1}.
\end{gather*}
For $\omega>0$ this is the standard Hahn setting~\cite{AnnabyHamzaAldwoah2012,Hahn1949}; $\omega=0$ gives Jackson's $q$-calculus, with $\wo=0$.

\begin{lemma}\label{lem:sigma}
For all $t\in\R$ and $k\in\N_0$:
\begin{enumerate}[label=\textup{(\alph*)}]
\item $\sigma(t)-\wo=q(t-\wo)$, hence $\sigma^k(t)=\wo+q^k(t-\wo)$;
\item $\delta(t)=(1-q)(\wo-t)$; in particular $\delta(t)=0$ if and only if $t=\wo$, and $\sgn\delta(t)=\sgn(\wo-t)$;
\item $\delta(\sigma^k(t))=q^k\delta(t)$ and $\sum_{k=0}^{\infty}\abs{\delta(\sigma^k(t))}=\abs{\wo-t}$.
\end{enumerate}
\end{lemma}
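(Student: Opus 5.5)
The plan is to verify the three parts in order by direct computation from the definitions $\sigma(t)=qt+\omega$, $\delta(t)=\sigma(t)-t$ and $\wo=\omega/(1-q)$. The only identity needed is $\omega=(1-q)\wo$, which holds because $q\neq1$.

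For (a), substituting $\omega=(1-q)\wo$ gives $\sigma(t)-\wo=qt+(1-q)\wo-\wo=q(t-\wo)$. The formula $\sigma^k(t)=\wo+q^k(t-\wo)$ then follows by induction on $k$. The case $k=0$ is trivial. For the inductive step, apply the first identity to the point $\sigma^k(t)$, which gives $\sigma^{k+1}(t)-\wo=q(\sigma^k(t)-\wo)=q^{k+1}(t-\wo)$.

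For (b), write $\delta(t)=(\sigma(t)-\wo)-(t-\wo)$ and use (a):
\[
\delta(t)=q(t-\wo)-(t-\wo)=(1-q)(\wo-t).
\]
Since $1-q>0$, the number $\delta(t)$ vanishes exactly when $t=\wo$, and it has the same sign as $\wo-t$.

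For (c), combine (b) at the point $\sigma^k(t)$ with (a):
\[
\delta(\sigma^k(t))=(1-q)(\wo-\sigma^k(t))=(1-q)q^k(\wo-t)=q^k\delta(t).
\]
The sum is then a geometric series with ratio $q\in(0,1)$, so it converges and
\[
\sum_{k=0}^{\infty}\abs{\delta(\sigma^k(t))}=\abs{\delta(t)}\sum_{k=0}^{\infty}q^k=\frac{\abs{\delta(t)}}{1-q}=\abs{\wo-t},
\]
where the last equality uses (b) again.

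There is no real obstacle here. The only point that needs care is to use $1-q>0$, both for the sign statement in (b) and for the convergence of the geometric series in (c). In the case $t=\wo$ every term of the sum is zero and both sides of (c) vanish, so the formula holds there as well.
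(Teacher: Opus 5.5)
Your proposal is correct and follows essentially the same direct computation as the paper. The only difference is that you obtain (b) from (a), while the paper expands $qt+\omega-t$ directly; this changes nothing of substance.
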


\begin{proof}
(a) $qt+\omega-\omega/(1-q)=qt-q\omega/(1-q)=q(t-\wo)$; iterate. (b) $qt+\omega-t=(1-q)\bigl(\omega/(1-q)-t\bigr)$. (c) By (a) and (b), $\delta(\sigma^k(t))=(1-q)(\wo-\sigma^k(t))=(1-q)q^k(\wo-t)=q^k\delta(t)$, and $\sum_k q^k\abs{\delta(t)}=\abs{\delta(t)}/(1-q)=\abs{\wo-t}$.
\end{proof}

\emph{Standing assumption.} $I\subset\R$ is an interval of positive length with $\wo\in I$, and $\Is:=I\setminus\{\wo\}$. By \cref{lem:sigma}(a), $\sigma(t)$ lies between $t$ and $\wo$; hence $\sigma(I)\subset I$, and $\sigma^k(t)\to\wo$ monotonically for every $t\in I$.

\begin{definition}\label{def:hahnderivative}
Let $f\colon I\to\R^n$. The \emph{Hahn derivative} of $f$ is
\[
  \D f(t)=\frac{f(\sigma(t))-f(t)}{\delta(t)}\quad(t\in\Is),\qquad \D f(\wo)=f'(\wo),
\]
the latter provided that $f$ is differentiable at $\wo$; in this case $f$ is called \emph{Hahn differentiable} on $I$. We abbreviate $D=\D$.
\end{definition}

\begin{lemma}[{cf.~\cite{AnnabyHamzaAldwoah2012}}]\label{lem:rules}
Let $f,g\colon I\to\R^3$ and $\lambda\colon I\to\R$ be Hahn differentiable on $I$. Then, for every $t\in I$,
\begin{enumerate}[label=\textup{(\roman*)}]
\item $f(\sigma(t))=f(t)+\delta(t)\,Df(t)$;
\item $D(\lambda f)(t)=D\lambda(t)\,f(t)+\lambda(\sigma(t))\,Df(t)$, $D(f\cdot g)(t)=Df(t)\cdot g(t)+f(\sigma(t))\cdot Dg(t)$ and $D(f\times g)(t)=Df(t)\times g(t)+f(\sigma(t))\times Dg(t)$;
\item $Df\equiv0$ if $f$ is constant.
\end{enumerate}
\end{lemma}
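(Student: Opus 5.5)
The plan is to treat the two cases of \cref{def:hahnderivative} separately throughout, namely $t\in\Is$ and $t=\wo$. For $t\in\Is$ everything reduces to algebra with the difference quotient. For $t=\wo$ we use $\sigma(\wo)=\wo$ and $\delta(\wo)=0$, which follow from \cref{lem:sigma}(a),(b), and the ordinary differentiation rules.

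For (i), when $t\neq\wo$ we multiply the defining quotient by $\delta(t)\neq0$, which gives $f(\sigma(t))=f(t)+\delta(t)Df(t)$ at once. When $t=\wo$ both sides equal $f(\wo)$, because $\sigma(\wo)=\wo$ and $\delta(\wo)=0$.

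For (ii), when $t\neq\wo$ we add and subtract a mixed term in the numerator. For the scalar multiple,
\[
\lambda(\sigma(t))f(\sigma(t))-\lambda(t)f(t)=\bigl(\lambda(\sigma(t))-\lambda(t)\bigr)f(t)+\lambda(\sigma(t))\bigl(f(\sigma(t))-f(t)\bigr),
\]
and dividing by $\delta(t)$ yields the claimed formula. The same splitting works for the dot and cross products, because both are bilinear. For the cross product we keep the order of the factors, with $f(\sigma(t))$ on the left of $Dg(t)$. At $t=\wo$ the three formulas become the classical Leibniz rules $(\lambda f)'=\lambda'f+\lambda f'$, $(f\cdot g)'=f'\cdot g+f\cdot g'$ and $(f\times g)'=f'\times g+f\times g'$, evaluated at $\wo$, since $\lambda(\sigma(\wo))=\lambda(\wo)$ and $f(\sigma(\wo))=f(\wo)$. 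These rules hold because $\lambda,f,g$ are differentiable at $\wo$ by assumption. In particular, the products are again Hahn differentiable, so their $D$ at $\wo$ is defined.

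For (iii), if $f$ is constant the numerator $f(\sigma(t))-f(t)$ vanishes, so $Df(t)=0$ for $t\neq\wo$. At $\wo$ we have $f'(\wo)=0$.

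There is no real obstacle here. The only point needing care is to check at $\wo$ that the product is differentiable, so that $D$ of the product is defined there, and that the shifted factor $\lambda(\sigma(t))$ (respectively $f(\sigma(t))$) collapses to the unshifted one. Both follow from $\sigma(\wo)=\wo$.
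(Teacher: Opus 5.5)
Your proposal is correct and follows the paper's proof. For $t\in\Is$ the identities come from the difference quotient by adding and subtracting a mixed term, and at $t=\wo$ they reduce to the classical rules because $\sigma(\wo)=\wo$ and $\delta(\wo)=0$; your explicit check that the products are differentiable at $\wo$, so that their Hahn derivative is defined there, is a detail the paper leaves implicit.
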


\begin{proof}
For $t\in\Is$ the identities follow from the definition; for instance $f(\sigma(t))\cdot g(\sigma(t))-f(t)\cdot g(t)=f(\sigma(t))\cdot\bigl(g(\sigma(t))-g(t)\bigr)+\bigl(f(\sigma(t))-f(t)\bigr)\cdot g(t)$. At $t=\wo$ they are the classical rules, because $\sigma(\wo)=\wo$ and $\delta(\wo)=0$.
\end{proof}

For a function $f$ that is bounded near $\wo$ we use the Jackson--N\"orlund integral~\cite{AnnabyHamzaAldwoah2012,Hahn1949}
\begin{equation}\label{eq:JN}
  \int_{\wo}^{t}f(x)\,d_{q,\omega}x:=-\sum_{k=0}^{\infty}\delta(\sigma^k(t))\,f(\sigma^k(t))
  =\bigl((1-q)t-\omega\bigr)\sum_{k=0}^{\infty}q^k f\bigl(tq^k+\omega\qn{k}\bigr);
\end{equation}
the two expressions agree by \cref{lem:sigma}, since $\sigma^k(t)=tq^k+\omega\qn{k}$ and $-\delta(t)=(1-q)t-\omega$.

\begin{lemma}\label{lem:JN}
Let $f\colon I\to\R^n$ be bounded on a neighbourhood of $\wo$ in $I$, and let $\Psi(t):=\int_{\wo}^{t}f(x)\,d_{q,\omega}x$. Then the series~\eqref{eq:JN} converges absolutely for every $t\in I$, and:
\begin{enumerate}[label=\textup{(\alph*)}]
\item $\Psi(\wo)=0$, $\Psi$ is continuous at $\wo$, and $\Psi(\sigma(t))-\Psi(t)=\delta(t)f(t)$ for all $t\in I$;
\item if $f$ is continuous at $\wo$, then $\Psi$ is differentiable at $\wo$ with $\Psi'(\wo)=f(\wo)$; hence $D\Psi=f$ on $I$;
\item if $G\colon I\to\R^n$ is continuous at $\wo$ and $G(\sigma(t))=G(t)$ for all $t\in\Is$, then $G$ is constant. Consequently, if $G$ is continuous at $\wo$ and $DG=f$ on $\Is$, then $G=G(\wo)+\Psi$.
\end{enumerate}
\end{lemma}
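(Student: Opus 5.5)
Convergence comes first. Choose a neighbourhood $U$ of $\wo$ in $I$ on which $\abs{f}\le M$. By \cref{lem:sigma}(a), $\sigma^k(t)\to\wo$ monotonically, so there is $k_0$ with $\sigma^k(t)\in U$ for all $k\ge k_0$. The finitely many terms with $k<k_0$ are finite vectors. The tail is dominated by $M\sum_k\abs{\delta(\sigma^k(t))}=M\abs{\wo-t}$, using \cref{lem:sigma}(c). Hence the series converges absolutely. The same bound, applied for $t\in U$, gives $\abs{\Psi(t)}\le M\abs{t-\wo}$, so $\Psi(\wo)=0$ (trivially, since $\delta(\wo)=0$) and $\Psi$ is continuous at $\wo$.

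For the shift identity in (a), write
\[
\Psi(\sigma(t))=-\sum_{k\ge0}\delta(\sigma^{k+1}(t))f(\sigma^{k+1}(t)).
\]
This is the series for $\Psi(t)$ with its $k=0$ term $-\delta(t)f(t)$ removed. Therefore $\Psi(\sigma(t))-\Psi(t)=\delta(t)f(t)$.

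For (b), I would compare $\Psi(t)$ with $(t-\wo)f(\wo)$. Since $-\sum_k\delta(\sigma^k(t))=t-\wo$, we get
\[
\Psi(t)-(t-\wo)f(\wo)=-\sum_k\delta(\sigma^k(t))\bigl(f(\sigma^k(t))-f(\wo)\bigr).
\]
For $\abs{t-\wo}<\eta$, all points $\sigma^k(t)$ lie between $t$ and $\wo$. Choosing $\eta$ so that $\abs{f(x)-f(\wo)}<\varepsilon$ within distance $\eta$ of $\wo$, the right-hand side is bounded by $\varepsilon\abs{t-\wo}$. This gives $\Psi'(\wo)=f(\wo)$. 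On $\Is$ the identity $D\Psi=f$ is (a) divided by $\delta(t)\ne0$, and at $\wo$ it is the derivative just computed.

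For (c), fix $t$. Then $G(t)=G(\sigma^k(t))$ for all $k$, and $\sigma^k(t)\to\wo$, so continuity at $\wo$ gives $G(t)=G(\wo)$. For the consequence, set $H=G-\Psi$. It is continuous at $\wo$ by (a), and $H(\sigma(t))-H(t)=\delta(t)(DG(t)-f(t))=0$ on $\Is$. The first part then gives $H\equiv H(\wo)=G(\wo)$.

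The only delicate point is the uniform control near $\wo$ in (b), namely that the whole orbit stays inside the neighbourhood. This follows directly from the monotonicity in \cref{lem:sigma}(a).
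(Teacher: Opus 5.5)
Your proof is correct and follows the paper's argument essentially step by step: tail domination via \cref{lem:sigma}(c), the index shift for (a), and orbit invariance plus continuity at $\wo$ applied to $G-\Psi$ for (c). The only deviation is in (b). There you bound $\Psi(t)-(t-\wo)f(\wo)$ directly by $\varepsilon\abs{t-\wo}$, using that the whole orbit stays within $\abs{t-\wo}$ of $\wo$, whereas the paper writes the difference quotient as $(1-q)\sum_k q^k f(\sigma^k(t))$ and applies dominated convergence; the two arguments are equivalent.
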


\begin{proof}
Choose $\eta>0$ and $\Lambda\ge0$ with $\abs{f}\le\Lambda$ on $\mathcal U:=I\cap(\wo-\eta,\wo+\eta)$. The set $\mathcal U$ is $\sigma$-invariant, since $\abs{\sigma(t)-\wo}=q\abs{t-\wo}$. For $t\in I$ there is $k_0$ with $\sigma^k(t)\in\mathcal U$ for $k\ge k_0$, and then $\abs{\delta(\sigma^k(t))f(\sigma^k(t))}\le\Lambda q^k\abs{\delta(t)}$; this proves absolute convergence.

(a) All terms vanish for $t=\wo$. For $t\in\mathcal U$, \cref{lem:sigma}(c) gives $\abs{\Psi(t)}\le\Lambda\sum_kq^k\abs{\delta(t)}=\Lambda\abs{t-\wo}$, hence continuity at $\wo$. Shifting the summation index, $\Psi(\sigma(t))=-\sum_{k\ge1}\delta(\sigma^k(t))f(\sigma^k(t))=\Psi(t)+\delta(t)f(t)$.

(b) For $t\in\mathcal U\setminus\{\wo\}$, by \cref{lem:sigma},
\[
  \frac{\Psi(t)-\Psi(\wo)}{t-\wo}=-\frac{\delta(t)}{t-\wo}\sum_{k=0}^{\infty}q^kf(\sigma^k(t))=(1-q)\sum_{k=0}^{\infty}q^kf(\sigma^k(t)).
\]
As $t\to\wo$, each term tends to $q^kf(\wo)$ and is bounded by $\Lambda q^k$; by dominated convergence the limit is $(1-q)\sum_kq^kf(\wo)=f(\wo)$. Together with (a) this gives $D\Psi=f$ on $I$.

(c) For $t\in I$ and $n\in\N$ we have $G(t)=G(\sigma^n(t))\to G(\wo)$. For the second statement apply this to $G-\Psi$, which is continuous at $\wo$ and satisfies $(G-\Psi)(\sigma(t))=(G-\Psi)(t)$ by (a).
\end{proof}

\begin{remark}\label{rem:orbits}
A relation such as $DG=f$ on $\Is$ only links the values of $G$ along each orbit $\{\sigma^n(t)\}$; it is continuity at $\wo$ that couples the different orbits (\cref{lem:JN}(c)). This mechanism is used repeatedly below.
\end{remark}

\section{Hahn space curves and their rotation structure}\label{sec:curves}

\begin{definition}\label{def:curve}
A \emph{Hahn curve} is a map $\gamma\colon I\to\R^3$ that is Hahn differentiable on $I$. Its \emph{Hahn velocity} is $v:=D\gamma$ and its \emph{Hahn speed} is $s:=\abs{v}$. The curve is \emph{regular} if $s>0$ on $I$ and \emph{unit-speed} if $s\equiv1$; for a regular Hahn curve the \emph{Hahn unit tangent} is $T:=v/s$.
\end{definition}

\begin{remark}\label{rem:polygons}
For $t\in\Is$, \cref{lem:rules}(i) gives $\gamma(\sigma(t))-\gamma(t)=\delta(t)v(t)$. Hence $T(t)$ is the unit vector along the chord joining $\gamma(t)$ and $\gamma(\sigma(t))$, oriented in the direction of increasing parameter, and this chord has length $\abs{\delta(t)}s(t)$. For $t_0\in\Is$ the points $\gamma(\sigma^n(t_0))$, $n\ge0$, are therefore the vertices of a polygon, the \emph{orbit polygon} of $t_0$, whose edge $[\gamma(\sigma^n(t_0)),\gamma(\sigma^{n+1}(t_0))]$ is parallel to $T(\sigma^n(t_0))$ and has length $q^n\abs{\delta(t_0)}s(\sigma^n(t_0))$. Since $\gamma$ is continuous at $\wo$, the vertices converge to $\gamma(\wo)$, and $T(\wo)=\gamma'(\wo)/\abs{\gamma'(\wo)}$ is the classical unit tangent. In general $T(t)$ is not the classical tangent of $\gamma$ at $t\neq\wo$ (see \cref{fig:circle}). For $t_0<\wo$, the points $\sigma^n(t_0)$ are right-scattered points of the time scale $\{\sigma^n(t_0):n\ge0\}\cup\{\wo\}$, at which $D$ is the delta derivative~\cite{BohnerPeterson2001}; thus $T(\sigma^n(t_0))$ is the direction of the forward tangent line, in the sense of Guseinov and \"Ozy\i lmaz~\cite{GuseinovOzyilmaz2001}, of the restriction of $\gamma$ to this time scale. For $t_0>\wo$, $D$ is the nabla derivative of the corresponding time scale. Curvatures of curves parametrized by time scales were studied in~\cite{AtmacaAkguller2015,Seyyidoglu2011}.
\end{remark}

\begin{definition}\label{def:frenetregular}
A regular Hahn curve is \emph{Frenet-regular} if
\begin{enumerate}[label=\textup{(F\arabic*)}]
\item\label{F1} $T$ is continuous on $I$ and differentiable at $\wo$, and
\item\label{F2} $T(t)\times DT(t)\neq0$ for every $t\in I$.
\end{enumerate}
For $t\in\Is$ we have $T(t)\times DT(t)=T(t)\times T(\sigma(t))/\delta(t)$, so \ref{F2} says that consecutive edges of the orbit polygons are never parallel; at $\wo$ it says that $T'(\wo)\neq0$. The continuity required in \ref{F1} is not implied by Hahn differentiability, which only links the values of a function along each orbit (\cref{rem:orbits}); it couples the orbit polygons through different base points, and without it the uniqueness part of \cref{thm:B} fails (\cref{rem:speed}(iii)).
\end{definition}

\begin{lemma}\label{lem:continuityDf}
Let $f\colon I\to\R^n$ be differentiable at $\wo$. Then $Df(t)\to f'(\wo)=Df(\wo)$ as $t\to\wo$. In particular, for a Frenet-regular curve $DT$ is continuous on $I$.
\end{lemma}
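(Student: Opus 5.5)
The plan is to rewrite the Hahn derivative at $t\neq\wo$ as a difference quotient centred at $\wo$, and then let $t\to\wo$. For $t\in\Is$, \cref{lem:sigma}(a),(b) give $\sigma(t)-\wo=q(t-\wo)$ and $\delta(t)=(1-q)(\wo-t)$. Hence
\[
  Df(t)=\frac{f(\sigma(t))-f(t)}{\delta(t)}
  =\frac{\bigl(f(\sigma(t))-f(\wo)\bigr)-\bigl(f(t)-f(\wo)\bigr)}{(1-q)(\wo-t)}
  =\frac{1}{1-q}\Bigl(\frac{f(t)-f(\wo)}{t-\wo}-q\,\frac{f(\sigma(t))-f(\wo)}{\sigma(t)-\wo}\Bigr).
\]
The last equality uses $f(\sigma(t))-f(\wo)=q(t-\wo)\cdot\frac{f(\sigma(t))-f(\wo)}{\sigma(t)-\wo}$, which is legitimate because $\sigma(t)\neq\wo$ whenever $t\neq\wo$. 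This identity is the key step, and it is purely algebraic.

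Next I pass to the limit. As $t\to\wo$ we have $t\in I$ and $\sigma(t)\to\wo$ with $\sigma(t)\in I\setminus\{\wo\}$. Differentiability of $f$ at $\wo$ (within $I$) therefore sends both difference quotients to $f'(\wo)$. The bracket then tends to $(1-q)f'(\wo)$, so $Df(t)\to f'(\wo)$, and this equals $Df(\wo)$ by \cref{def:hahnderivative}. The only point needing care is that $\sigma(t)$ stays in $I$ and distinct from $\wo$; both follow from the standing assumption and \cref{lem:sigma}(a).

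For the consequence, let $\gamma$ be Frenet-regular and apply the first part to $f=T$, which is differentiable at $\wo$ by \ref{F1}. This gives continuity of $DT$ at $\wo$. At $t_0\in\Is$, the point $\sigma(t_0)\neq\wo$ also lies in $I$, and $\delta$ is continuous and nonzero near $t_0$. The map $t\mapsto\sigma(t)$ is continuous, and $T$ is continuous on $I$ by \ref{F1}, so $DT(t)=(T(\sigma(t))-T(t))/\delta(t)$ is a quotient of continuous functions with nonvanishing denominator near $t_0$, hence continuous there.

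I expect no step to be a genuine obstacle. The only subtle point is recognising the rewriting of $Df(t)$ as a combination of two difference quotients at $\wo$, since a direct attempt to bound $f(\sigma(t))-f(t)$ using only differentiability at the single point $\wo$ would otherwise seem insufficient.
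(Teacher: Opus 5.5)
Your proof is correct and is essentially the paper's argument. The paper writes $f(x)=f(\wo)+(x-\wo)\bigl(f'(\wo)+\epsilon(x)\bigr)$ and obtains $Df(t)=f'(\wo)+\bigl(\epsilon(t)-q\,\epsilon(\sigma(t))\bigr)/(1-q)$, which is your decomposition into the two difference quotients at $\wo$; the continuity of $DT$ on $\Is$ is argued in the same way in both.
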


\begin{proof}
Write $f(x)=f(\wo)+(x-\wo)\bigl(f'(\wo)+\epsilon(x)\bigr)$ with $\epsilon(x)\to0$ as $x\to\wo$. Since $\sigma(t)-\wo=q(t-\wo)$ and $\delta(t)=-(1-q)(t-\wo)$,
\[
  Df(t)=f'(\wo)+\frac{\epsilon(t)-q\,\epsilon(\sigma(t))}{1-q}\longrightarrow f'(\wo)\qquad(t\to\wo).
\]
For a Frenet-regular curve, $DT(t)=\bigl(T(\sigma(t))-T(t)\bigr)/\delta(t)$ is continuous on $\Is$ because $T$ is continuous, and it is continuous at $\wo$ by the first part.
\end{proof}

\begin{definition}\label{def:frame}
Let $\gamma$ be Frenet-regular. The \emph{Hahn binormal} and the \emph{Hahn principal normal} are
\[
  B:=\frac{T\times DT}{\abs{T\times DT}},\qquad N:=B\times T,
\]
so that $(T,N,B)$ is a positively oriented orthonormal frame ($T\times N=B$) at every $t\in I$; by \cref{lem:continuityDf}, $N$ and $B$ are continuous on $I$. We denote by $F(t)\in\SO$ the matrix whose rows are $T(t),N(t),B(t)$. The \emph{Hahn curvature}, the \emph{Hahn torsion} and the \emph{tangential component} are
\[
  \kappa:=DT\cdot N,\qquad \tau:=-DB\cdot N,\qquad \beta:=DT\cdot T,
\]
and the \emph{Hahn Frenet matrix} $M=(m_{ij})$ is defined by $DF=MF$, that is, $m_{ij}=DF_i\cdot F_j$ with $(F_1,F_2,F_3)=(T,N,B)$. The quantities $\kappa$, $\beta$ and the first row of $M$ are defined on all of $I$; $\tau$ and the last two rows of $M$ are defined on $\Is$, and at $\wo$ whenever $B$ is differentiable there.

For $t\in\Is$ the vector $B(t)$ is a multiple of $T(t)\times T(\sigma(t))$; hence $T(\sigma(t))\in\operatorname{span}\{T(t),N(t)\}$ and $B(t)\in\operatorname{span}\{N(\sigma(t)),B(\sigma(t))\}$. We may therefore define the \emph{turning angle} $\theta(t)\in(-\pi,\pi]$ and the \emph{torsion angle} $\varphi(t)\in(-\pi,\pi]$ by
\begin{equation}\label{eq:angles}
\begin{aligned}
  \cos\theta(t)&=T(t)\cdot T(\sigma(t)), &\qquad \sin\theta(t)&=T(\sigma(t))\cdot N(t),\\
  \cos\varphi(t)&=B(t)\cdot B(\sigma(t)), &\qquad \sin\varphi(t)&=N(\sigma(t))\cdot B(t).
\end{aligned}
\end{equation}
\end{definition}

Let
\begin{gather*}
  Z(\theta)=\begin{pmatrix}\cos\theta&\sin\theta&0\\-\sin\theta&\cos\theta&0\\0&0&1\end{pmatrix},\qquad
  X(\varphi)=\begin{pmatrix}1&0&0\\0&\cos\varphi&\sin\varphi\\0&-\sin\varphi&\cos\varphi\end{pmatrix},\\
  R(\theta,\varphi):=X(\varphi)Z(\theta).
\end{gather*}
These matrices act on frame matrices from the left, i.e.\ on their rows: for instance, the rows of $Z(\theta)F$ are $\cos\theta\,T+\sin\theta\,N$, $-\sin\theta\,T+\cos\theta\,N$ and $B$.

\begin{theorem}\label{thm:A}
Let $\gamma$ be Frenet-regular and $t\in\Is$. Then
\[
  F(\sigma(t))=R(\theta(t),\varphi(t))\,F(t),
\]
that is,
\begin{equation}\label{eq:rotation}
\begin{aligned}
  T(\sigma(t))&=\cos\theta\,T+\sin\theta\,N,\\
  N(\sigma(t))&=-\sin\theta\cos\varphi\,T+\cos\theta\cos\varphi\,N+\sin\varphi\,B,\\
  B(\sigma(t))&=\sin\theta\sin\varphi\,T-\cos\theta\sin\varphi\,N+\cos\varphi\,B,
\end{aligned}
\end{equation}
where $T,N,B,\theta,\varphi$ on the right-hand sides are evaluated at $t$. Moreover $\theta(t)\notin\{0,\pi\}$, $\sgn\theta(t)=\sgn\delta(t)$, and $M(t)=\bigl(R(\theta,\varphi)-I\bigr)/\delta(t)$, i.e.
\begin{equation}\label{eq:M}
  M=\frac1\delta\begin{pmatrix}\cos\theta-1&\sin\theta&0\\-\sin\theta\cos\varphi&\cos\theta\cos\varphi-1&\sin\varphi\\ \sin\theta\sin\varphi&-\cos\theta\sin\varphi&\cos\varphi-1\end{pmatrix}.
\end{equation}
In particular,
\begin{equation}\label{eq:invariants}
  \delta\kappa=\sin\theta,\qquad 1+\delta\beta=\cos\theta,\qquad \delta\tau=\cos\theta\sin\varphi .
\end{equation}
\end{theorem}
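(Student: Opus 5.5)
The plan is to express the frame at $\sigma(t)$ in the orthonormal basis $(T,N,B)$ at $t$ and read off the coefficients from the angle definitions~\eqref{eq:angles}. Fix $t\in\Is$ and abbreviate $T=T(t)$, $T_1=T(\sigma(t))$, and similarly for $N,B$.

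\emph{Tangent.} By construction $B$ is a positive multiple of $T\times DT=T\times T_1/\delta$. Hence $T_1\perp B$, so $T_1=(T_1\cdot T)T+(T_1\cdot N)N=\cos\theta\,T+\sin\theta\,N$; this is the first row. Condition \ref{F2} gives $T\times T_1\ne0$, so $\sin\theta\neq0$ and $\theta\notin\{0,\pi\}$. For the sign, compute
\[
T\times T_1=\sin\theta\,T\times N=\sin\theta\,B .
\]
On the other hand $T\times T_1=\delta\,\abs{T\times DT}\,B$. Comparing the two gives $\sgn\sin\theta=\sgn\delta$, and since $\theta\in(-\pi,\pi]$ this means $\sgn\theta=\sgn\delta$.

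\emph{Binormal and normal.} Applying the same observation at $\sigma(t)$: $T_1\times T(\sigma^2(t))$ is parallel to $B_1$, so $B_1\perp T_1$. By definition $B\perp T_1$ as well, so $B$ and $B_1$ both lie in the plane $T_1^\perp$. This plane has the two orthonormal bases $(N_1,B_1)$ and $(N',B)$ with $N':=B\times T_1=-\sin\theta\,T+\cos\theta\,N$. Both bases are positively oriented relative to $T_1$, because $T_1\times N_1=B_1$ and $T_1\times N'=B$; one checks the latter directly from $T_1=\cos\theta\,T+\sin\theta\,N$.

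So $(N_1,B_1)$ is obtained from $(N',B)$ by a rotation about $T_1$. By~\eqref{eq:angles} this rotation has $\cos\varphi=B\cdot B_1$ and $\sin\varphi=N_1\cdot B$. Therefore
\[
N_1=\cos\varphi\,N'+\sin\varphi\,B,\qquad B_1=-\sin\varphi\,N'+\cos\varphi\,B .
\]
The one sign to verify is that the coefficient of $B$ in $N_1$ is exactly $N_1\cdot B=\sin\varphi$, which holds by the definition, and orientation then forces the form of $B_1$. Substituting $N'$ yields~\eqref{eq:rotation}. Equivalently, $F(\sigma(t))=X(\varphi)Z(\theta)F(t)$, since $Z(\theta)F$ has rows $T_1,N',B$.

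\emph{Frenet matrix and invariants.} For $t\neq\wo$ we have $DF(t)=(F(\sigma(t))-F(t))/\delta=\frac1\delta(R-I)F$. Since $F$ is invertible, $M=(R-I)/\delta$, and multiplying out $X(\varphi)Z(\theta)$ gives~\eqref{eq:M}. Reading off the entries gives~\eqref{eq:invariants}:
\begin{itemize}
\item $\kappa=m_{12}=\sin\theta/\delta$;
\item $\beta=m_{11}=(\cos\theta-1)/\delta$;
\item $\tau=-DB\cdot N=-m_{32}=\cos\theta\sin\varphi/\delta$.
\end{itemize}

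The only delicate step is the orientation bookkeeping in the binormal step. I expect no other obstacle, since everything else is linear algebra in an orthonormal basis.
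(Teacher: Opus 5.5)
Your proposal is correct and follows essentially the same route as the paper. You read off the first row and the sign of $\sin\theta$ from $T\times T_1$, then write $(N_1,B_1)$ in the rotated basis $(N',B)$ of $T_1^{\perp}$ and finally compute $M=(R-I)/\delta$. The only difference is cosmetic: you fix the sign by the orientation of the plane $T_1^{\perp}$, whereas the paper computes $N_1=B_1\times T_1$ explicitly; also, $B_1\perp T_1$ follows directly from the orthonormality of the frame at $\sigma(t)$, so no detour through $\sigma^2(t)$ is needed.
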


\begin{proof}
Write $T_1:=T(\sigma(t))$, $N_1:=N(\sigma(t))$, $B_1:=B(\sigma(t))$ and omit the argument $t$. Since $DT=(T_1-T)/\delta$, we have $T\times DT=T\times T_1/\delta$, hence $B=\sgn(\delta)\,T\times T_1/\abs{T\times T_1}$ and
\[
  \sin\theta=T_1\cdot(B\times T)=B\cdot(T\times T_1)=\sgn(\delta)\,\abs{T\times T_1}.
\]
By \ref{F2}, $\sin\theta\neq0$; thus $\theta\notin\{0,\pi\}$ and $\sgn\theta=\sgn\delta$. As $T_1\in\operatorname{span}\{T,N\}$, the first line of~\eqref{eq:rotation} follows from~\eqref{eq:angles}. Put $N':=-\sin\theta\,T+\cos\theta\,N$. The rows of $Z(\theta)F$ are $T_1,N',B$; they form a positively oriented orthonormal frame, so that $T_1\times N'=B$, $N'\times B=T_1$ and $B\times T_1=N'$. Since $F(\sigma(t))$ is orthonormal, $B_1\perp T_1$, so $B_1=(B_1\cdot N')N'+(B_1\cdot B)B=-\mu N'+\cos\varphi\,B$, where $\mu:=-B_1\cdot N'$. Consequently
\[
  N_1=B_1\times T_1=-\mu\,N'\times T_1+\cos\varphi\,B\times T_1=\mu\,B+\cos\varphi\,N',
\]
and $\mu=N_1\cdot B=\sin\varphi$ by~\eqref{eq:angles}. Thus $N_1=\cos\varphi\,N'+\sin\varphi\,B$ and $B_1=-\sin\varphi\,N'+\cos\varphi\,B$, that is, $F(\sigma(t))=X(\varphi)Z(\theta)F(t)$; expanding $N'$ gives~\eqref{eq:rotation}. Finally $DF(t)=\bigl(F(\sigma(t))-F(t)\bigr)/\delta=\bigl((R-I)/\delta\bigr)F(t)$, and $F(t)$ is orthogonal, so $M=(R-I)/\delta$. The entries $m_{11}=\beta$, $m_{12}=\kappa$ and $m_{32}=-\tau$ of~\eqref{eq:M} give~\eqref{eq:invariants}.
\end{proof}

\begin{corollary}\label{cor:relations}
Let $\gamma$ be Frenet-regular. Then on $\Is$:
\begin{enumerate}[label=\textup{(\alph*)}]
\item $\kappa=\abs{\sin\theta}/\abs{\delta}>0$ and $(1+\delta\beta)^2+\delta^2\kappa^2=1$; moreover $\beta\neq0$ and $\sgn\beta=-\sgn\delta$;
\item $m_{13}=0$, $m_{21}=-\kappa\cos\varphi$, $m_{22}=(\cos\theta\cos\varphi-1)/\delta$, $m_{23}=\sin\varphi/\delta$, $m_{31}=\kappa\sin\varphi$, $m_{33}=(\cos\varphi-1)/\delta$;
\item $1-\delta^2(\kappa^2+\tau^2)=\cos^2\theta\cos^2\varphi$; hence $\delta^2(\kappa^2+\tau^2)<1$ if and only if $\cos\theta\cos\varphi\neq0$;
\item if $\delta^2(\kappa^2+\tau^2)<1$ on $\Is$, then $\abs{\theta}<\pi/2$ and $\abs{\varphi}<\pi/2$ on $\Is$, and with $\varrho_1:=\sqrt{1-\delta^2\kappa^2}$ and $\varrho_2:=\sqrt{1-\delta^2\tau^2/\varrho_1^2}$ we have $\cos\theta=\varrho_1$, $\sin\varphi=\delta\tau/\varrho_1$, $\cos\varphi=\varrho_2$ and
\begin{gather*}
  \beta=\frac{\varrho_1-1}{\delta},\quad m_{21}=-\kappa\varrho_2,\quad m_{22}=\frac{\varrho_1\varrho_2-1}{\delta},\\
  m_{23}=\frac{\tau}{\varrho_1},\quad m_{31}=\frac{\delta\kappa\tau}{\varrho_1},\quad m_{33}=\frac{\varrho_2-1}{\delta}.
\end{gather*}
\end{enumerate}
\end{corollary}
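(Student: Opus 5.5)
The whole corollary is read off from \cref{thm:A}. On $\Is$ the identity $M=(R(\theta,\varphi)-I)/\delta$ in~\eqref{eq:M} gives every entry of $M$ explicitly, together with the three relations~\eqref{eq:invariants}. Each item then follows by elementary trigonometry, using $\theta\notin\{0,\pi\}$ and $\sgn\theta=\sgn\delta$.

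For (a), I would start from $\delta\kappa=\sin\theta$ and $\sgn\sin\theta=\sgn\theta=\sgn\delta$. Their product $\delta\sin\theta$ is then positive, so $\kappa=\sin\theta/\delta=\abs{\sin\theta}/\abs{\delta}>0$. The identity $(1+\delta\beta)^2+\delta^2\kappa^2=1$ is $\cos^2\theta+\sin^2\theta=1$. For $\beta$, I would use $\delta\beta=\cos\theta-1$. This is strictly negative because $\theta\ne0$ and $\theta\in(-\pi,\pi]$. Hence $\beta\neq0$ and $\sgn\beta=-\sgn\delta$.

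Part (b) is just reading off the entries of~\eqref{eq:M}. Substituting $\sin\theta/\delta=\kappa$ gives $m_{21}=-\kappa\cos\varphi$ and $m_{31}=\kappa\sin\varphi$.

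For (c), I would use $\delta\kappa=\sin\theta$ and $\delta\tau=\cos\theta\sin\varphi$. Then $1-\delta^2\kappa^2-\delta^2\tau^2=\cos^2\theta-\cos^2\theta\sin^2\varphi=\cos^2\theta\cos^2\varphi$, and the equivalence is immediate.

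Part (d) is the only step needing care, namely the choice of square-root branches. The hypothesis together with (c) gives $\cos\theta\neq0$ and $\cos\varphi\neq0$ at every point of $\Is$. That alone does not fix the signs. I would use continuity: $T$ and $B$ are continuous on $I$ by \ref{F1} and \cref{def:frame}, and $\sigma$ is continuous, so $\cos\theta(t)=T(t)\cdot T(\sigma(t))$ and $\cos\varphi(t)=B(t)\cdot B(\sigma(t))$ are continuous on $I$. As $t\to\wo$ both tend to $1$, since $T(\wo)\cdot T(\wo)=B(\wo)\cdot B(\wo)=1$. Each component of $\Is$ is an interval with $\wo$ as an endpoint, and $I$ has positive length, so at least one such component exists. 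On each component the continuous, nonvanishing functions $\cos\theta$ and $\cos\varphi$ keep a constant sign, which must be positive because of the limit at $\wo$. This gives $\abs{\theta}<\pi/2$ and $\abs{\varphi}<\pi/2$.

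The formulas then follow directly. From $\cos\theta>0$ we get $\cos\theta=\sqrt{1-\sin^2\theta}=\varrho_1$, and $\varrho_1>0$. Next $\sin\varphi=\delta\tau/\cos\theta=\delta\tau/\varrho_1$, and with $\cos\varphi>0$ this gives $\cos\varphi=\sqrt{1-\delta^2\tau^2/\varrho_1^2}=\varrho_2$. Substituting into (a), (b) and~\eqref{eq:M} yields all the listed entries. For example, $m_{23}=\sin\varphi/\delta=\tau/\varrho_1$ and $m_{31}=\kappa\sin\varphi=\delta\kappa\tau/\varrho_1$.
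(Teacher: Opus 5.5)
Your proposal is correct and follows essentially the same route as the paper: (a)--(c) are read off from \eqref{eq:M} and \eqref{eq:invariants} using $\theta\notin\{0,\pi\}$ and $\sgn\theta=\sgn\delta$. For (d), the sign choices are fixed by the continuity of $T(t)\cdot T(\sigma(t))$ and $B(t)\cdot B(\sigma(t))$ on each component of $\Is$, their limit $1$ at $\wo$, and their non-vanishing by (c).
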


\begin{proof}
(a) and (b) are read off from~\eqref{eq:M}--\eqref{eq:invariants} and \cref{thm:A}; note that $\cos\theta\neq1$ because $\sin\theta\neq0$. (c) $\delta^2\kappa^2+\delta^2\tau^2=\sin^2\theta+\cos^2\theta\sin^2\varphi=1-\cos^2\theta\cos^2\varphi$. (d) The functions $\cos\theta(t)=T(t)\cdot T(\sigma(t))$ and $\cos\varphi(t)=B(t)\cdot B(\sigma(t))$ are continuous on $\Is$, because $T$ and $B$ are continuous on $I$, and they tend to $1$ as $t\to\wo$, because $T$ and $B$ are continuous at $\wo$. By (c) they do not vanish on $\Is$, so they are positive on each of the (at most two) components of $\Is$. The formulas follow from~\eqref{eq:invariants} and (b).
\end{proof}

Classically, $N'\cdot B=-B'\cdot N$. In the Hahn setting the two candidates for the torsion differ: by~\eqref{eq:invariants} and \cref{cor:relations}(b), $m_{23}=DN\cdot B=\sin\varphi/\delta$, whereas $\tau=-DB\cdot N=\cos\theta\sin\varphi/\delta$. They coincide at $\wo$ whenever $B$ is differentiable there (\cref{prop:fixedpoint}(b)), so none of the results at the fixed point depends on the choice. We use $\tau=-DB\cdot N$, which mirrors $B'=-\tau N$ and yields the identity in \cref{cor:relations}(c); with $m_{23}$ instead, the additional hypothesis in \cref{thm:planar}(v) would disappear, and the condition in \cref{thm:B} would become $\delta^2\kappa^2<1$ and $\delta^2m_{23}^2<1$.

\begin{proposition}\label{prop:fixedpoint}
Let $\gamma$ be Frenet-regular.
\begin{enumerate}[label=\textup{(\alph*)}]
\item $T'(\wo)=\kappa(\wo)N(\wo)$ with $\kappa(\wo)=\abs{T'(\wo)}>0$, and $\beta(\wo)=0$; moreover, as $t\to\wo$,
\[
  \kappa(t)\to\kappa(\wo),\qquad \beta(t)\to0,\qquad \theta(t)\to0,\qquad \frac{\theta(t)}{\delta(t)}\to\kappa(\wo).
\]
\item If, in addition, $B$ is differentiable at $\wo$, then
\[
  M(\wo)=\begin{pmatrix}0&\kappa(\wo)&0\\-\kappa(\wo)&0&\tau(\wo)\\0&-\tau(\wo)&0\end{pmatrix},
\]
so the Hahn Frenet equations at $\wo$ are the classical Frenet equations of $F$, and, as $t\to\wo$,
\[
  \tau(t)\to\tau(\wo),\qquad \varphi(t)\to0,\qquad \frac{\varphi(t)}{\delta(t)}\to\tau(\wo).
\]
\end{enumerate}
\end{proposition}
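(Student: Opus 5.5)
The plan is to work directly at $\wo$ with the classical derivatives that the Frenet-regularity assumptions provide, and then to pass to the limit $t\to\wo$ using \cref{lem:continuityDf} together with the explicit formulas of \cref{thm:A}.

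\emph{Part (a).} Since $\abs{T}\equiv1$ and $T$ is differentiable at $\wo$, we have $T\cdot T'(\wo)=0$, so $\beta(\wo)=T'(\wo)\cdot T(\wo)=0$. Because $DT(\wo)=T'(\wo)\neq0$ by \ref{F2}, $B(\wo)$ is the unit vector along $T\times T'$. Then $N(\wo)=B\times T$ is $T'(\wo)/\abs{T'(\wo)}$, since $T'\perp T$. Hence $T'(\wo)=\abs{T'(\wo)}N(\wo)$, and $\kappa(\wo)=\abs{T'(\wo)}>0$.

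For the limits, \cref{lem:continuityDf} gives that $DT$ is continuous on $I$, and $N$ and $T$ are continuous. Therefore $\kappa=DT\cdot N\to\kappa(\wo)$ and $\beta=DT\cdot T\to0$. Next, $\cos\theta=T\cdot T(\sigma(t))\to1$ by continuity of $T$ at $\wo$, and $\theta\in(-\pi,\pi]$, so $\theta\to0$. Finally, by \eqref{eq:invariants}, $\sin\theta/\delta=\kappa\to\kappa(\wo)$. Since $\theta\to0$, we have $\theta/\sin\theta\to1$, and therefore $\theta/\delta\to\kappa(\wo)$.

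\emph{Part (b).} Assume $B$ is differentiable at $\wo$. Then $N=B\times T$ is differentiable there too, so $F$ is differentiable at $\wo$ and $M(\wo)=F'(\wo)F(\wo)^{\mathsf T}$. Differentiating $FF^{\mathsf T}=I$ shows that $M(\wo)$ is skew-symmetric. We already have $m_{11}=0$ and $m_{12}=\kappa(\wo)$. The entry $m_{13}=T'\cdot B=\kappa(\wo)N\cdot B=0$, and $m_{32}=-\tau(\wo)$ by definition. Skew-symmetry then fills in the remaining entries, including $m_{23}=\tau(\wo)$.

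For the limits, \cref{lem:continuityDf} applied to $B$ gives $DB(t)\to B'(\wo)$, so $\tau=-DB\cdot N\to\tau(\wo)$. Likewise $\cos\varphi=B\cdot B(\sigma(t))\to1$, which gives $\varphi\to0$. By \eqref{eq:invariants}, $\sin\varphi/\delta=\tau/\cos\theta\to\tau(\wo)$, using $\cos\theta\to1$ from (a). This yields $\varphi/\delta\to\tau(\wo)$.

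\emph{Main obstacle.} The delicate points are the two passages from $\sin\theta\to0$ and $\sin\varphi\to0$ to $\theta\to0$ and $\varphi\to0$. The cosine limits exclude angles near $\pi$, and this is where continuity of $T$ and $B$ \emph{at} $\wo$ is essential. I would argue this step explicitly rather than leave it implicit.
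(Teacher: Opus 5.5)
Your proof is correct and follows the paper's argument essentially step by step: orthogonality at $\wo$, \cref{lem:continuityDf} for the limits, the cosine limits to force $\theta,\varphi\to0$, and skew-symmetry of $F'(\wo)F(\wo)^{\top}$. The only difference is that you pass from $\sin\theta/\delta$ to $\theta/\delta$ via $\theta/\sin\theta\to1$, which is legitimate because $\sin\theta\neq0$ on $\Is$ by \cref{thm:A}, whereas the paper uses the bound $\abs{\theta}\le\frac\pi2\abs{\sin\theta}$. For $\varphi$ the same quotient trick needs $x/\sin x$ extended continuously by $1$ at $0$ (or the paper's bound), because $\sin\varphi$ can vanish, for instance for planar curves.
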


\begin{proof}
(a) Since $\abs{T}\equiv1$, $T'(\wo)\perp T(\wo)$, and $T'(\wo)\perp B(\wo)$ because $B(\wo)$ is a multiple of $T(\wo)\times T'(\wo)$; hence $T'(\wo)=\kappa(\wo)N(\wo)$, $\beta(\wo)=0$, and $\kappa(\wo)=\abs{T'(\wo)}>0$ by \ref{F2}. By \cref{lem:continuityDf}, $DT(t)\to T'(\wo)$, while $T(t)\to T(\wo)$ and $N(t)\to N(\wo)$; hence $\kappa(t)\to\kappa(\wo)$ and $\beta(t)\to0$. Since $\cos\theta(t)=T(t)\cdot T(\sigma(t))\to1$, we have $\theta(t)\to0$, and eventually $\abs{\theta}\le\pi/2$, so that $\abs{\theta}\le\frac\pi2\abs{\sin\theta}=\frac\pi2\abs{\delta}\kappa$ by~\eqref{eq:invariants}. Thus $\theta/\delta$ is bounded, $(\theta-\sin\theta)/\delta=O(\theta^2)\to0$, and $\theta/\delta\to\kappa(\wo)$.

(b) Now $DF(\wo)=F'(\wo)$ ($N=B\times T$ is differentiable at $\wo$ as well). Differentiating $FF^{\top}=I$ shows that $M(\wo)=F'(\wo)F(\wo)^{\top}$ is skew-symmetric; together with $m_{13}(\wo)=T'(\wo)\cdot B(\wo)=0$, $m_{12}=\kappa(\wo)$ and $m_{32}=-\tau(\wo)$ this gives the form of $M(\wo)$. By \cref{lem:continuityDf}, $DB(t)\to B'(\wo)$, hence $\tau(t)\to\tau(\wo)$. Since $\cos\varphi(t)=B(t)\cdot B(\sigma(t))\to1$, we have $\varphi(t)\to0$ and eventually $\abs{\varphi}\le\frac\pi2\abs{\sin\varphi}$; with $\sin\varphi=\delta\tau/\cos\theta$ from~\eqref{eq:invariants} it follows as in (a) that $\varphi/\delta\to\tau(\wo)$.
\end{proof}

\begin{remark}\label{rem:DFF}
By \cref{rem:polygons} and \cref{thm:A}, along every orbit polygon the Hahn Frenet frame is transported from one edge to the next by a rotation through the turning angle about the binormal, followed by a rotation through the torsion angle about the new tangent. Up to the indexing convention, it is thus the discrete Frenet frame of the orbit polygon in the sense of~\cite{CarrollHankinsKoseSterling2014,HuLundgrenNiemi2011}; here the binormal $B(t)$ belongs to the vertex $\gamma(\sigma(t))$ that is shared by the edges with directions $T(t)$ and $T(\sigma(t))$, the edges being ordered by increasing parameter. \Cref{thm:A} also shows that $\kappa$ and $\beta$ carry no independent information: $\delta\kappa$ and $1+\delta\beta$ are the sine and the cosine of the turning angle.
\end{remark}

Since the Hahn structure is given by the fixed map $\sigma$, only special changes of parameter are compatible with it. Let $\gamma\colon I\to\R^3$ be a regular Hahn curve. We call a monotone bijection $h\colon I'\to I$, where $I'$ is an interval containing $\wo$, an \emph{admissible reparametrization} of $\gamma$ if $h\circ\sigma=\sigma\circ h$ on $I'$ and $\gamma\circ h$ is again a regular Hahn curve. The first condition means that $h$ maps orbits of $\sigma$ onto orbits, so that $\gamma$ and $\gamma\circ h$ have the same orbit polygons (\cref{rem:polygons}); without it, a change of parameter changes the polygons themselves.

\begin{proposition}\label{prop:reparam}
Let $\gamma\colon I\to\R^3$ be a regular Hahn curve, and let $h\colon I'\to I$ be a monotone bijection, where $I'$ is an interval containing $\wo$, with $h\circ\sigma=\sigma\circ h$ on $I'$. Then $\gamma\circ h$ is Hahn differentiable on $I'$ if and only if
\begin{equation}\label{eq:scaling}
  h(t)=\wo+\lambda\,(t-\wo)\qquad(t\in I')
\end{equation}
for some $\lambda\neq0$. In this case $\tilde\gamma:=\gamma\circ h$ is a regular Hahn curve, it is Frenet-regular if $\gamma$ is, and
\begin{equation}\label{eq:transf}
\begin{gathered}
  \tilde s=\abs{\lambda}\,s\circ h,\qquad \tilde\kappa=\abs{\lambda}\,\kappa\circ h,\qquad \tilde\tau=\abs{\lambda}\,\tau\circ h,\qquad \tilde\beta=\lambda\,\beta\circ h,\\
  \tilde T=\sgn(\lambda)\,T\circ h,\qquad \tilde N=N\circ h,\qquad \tilde B=\sgn(\lambda)\,B\circ h,\\
  \tilde\theta=\sgn(\lambda)\,\theta\circ h,\qquad \tilde\varphi\equiv\sgn(\lambda)\,\varphi\circ h\pmod{2\pi}.
\end{gathered}
\end{equation}
In particular, the edge lengths $\ell(t):=\abs{\gamma(\sigma(t))-\gamma(t)}=\abs{\delta(t)}\,s(t)$ of the orbit polygons and the quotients
\[
  \frac{\kappa}{s}=\frac{\abs{\sin\theta}}{\ell},\qquad \frac{\tau}{s}=\sgn(\delta)\,\frac{\cos\theta\sin\varphi}{\ell}\qquad\text{on }\Is
\]
transform by composition with $h$.
\end{proposition}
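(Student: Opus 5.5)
The plan is to reduce everything to the conjugacy $h\circ\sigma=\sigma\circ h$ written in coordinates centred at $\wo$. \textbf{Step 1.} I first show $h(\wo)=\wo$. Applying the conjugacy at $\wo$ gives $\sigma(h(\wo))=h(\wo)$, and by \cref{lem:sigma}(a) the only fixed point of $\sigma$ is $\wo$. Next I put $g(x):=h(\wo+x)-\wo$. \Cref{lem:sigma}(a) turns the conjugacy into $g(qx)=q\,g(x)$. Hence the quotient $\rho(x):=g(x)/x$ satisfies $\rho(qx)=\rho(x)$ for $x\neq0$. In other words, $\rho(t-\wo)=\bigl(h(t)-\wo\bigr)/(t-\wo)$ is constant along every orbit of $\sigma$ in $I'\setminus\{\wo\}$.

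\textbf{Step 2 (main point).} I will show that Hahn differentiability of $\gamma\circ h$ forces $\rho$ to be constant. On $I'\setminus\{\wo\}$ the Hahn derivative of $\gamma\circ h$ is always defined, so the only condition is differentiability at $\wo$. Since $\gamma$ is differentiable at $\wo$, I write
\[
\gamma(h(t))-\gamma(\wo)=\bigl(h(t)-\wo\bigr)\bigl(\gamma'(\wo)+\epsilon(h(t))\bigr),\qquad \epsilon(y)\to0.
\]
Here $h(t)\to\wo$ as $t\to\wo$, because $h$ is a monotone bijection between intervals and hence continuous. Suppose $\gamma\circ h$ is differentiable at $\wo$, say with derivative $w$. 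Dividing by $t-\wo$ and taking the dot product with $\gamma'(\wo)\neq0$ (regularity) gives
\[
\rho(t-\wo)\bigl(\abs{\gamma'(\wo)}^2+o(1)\bigr)\to w\cdot\gamma'(\wo).
\]
So $\rho(x)$ has a limit $\lambda$ as $x\to0$. This is a one-sided or two-sided limit according to whether $\wo$ is an endpoint of $I'$ or not. Since $\rho$ is invariant under $x\mapsto qx$ and every orbit accumulates at $0$, $\rho\equiv\lambda$ on $I'\setminus\{\wo\}$. When $\wo$ is interior, the two-sided limit gives the same constant on both sides. Finally, $\lambda\neq0$, since otherwise $h$ would be constant and not a bijection. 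This yields \eqref{eq:scaling}. Conversely, an affine $h$ as in \eqref{eq:scaling} commutes with $\sigma$, and $\gamma\circ h$ is differentiable at $\wo$ by the chain rule. I expect this step to be the only non-routine one; the obstacle is recognising that the derivative at the single point $\wo$, combined with the orbit-invariance of $\rho$, already forces global linearity.

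\textbf{Step 3 (transformation rules).} By \cref{lem:sigma}(b), $\delta(h(t))=(1-q)\bigl(\wo-h(t)\bigr)=\lambda\,\delta(t)$. On $I'\setminus\{\wo\}$ this gives $\tilde v=\lambda\,v\circ h$, and at $\wo$ the same identity is the chain rule. The remaining rules follow in order:
\begin{enumerate}[label=\textup{(\roman*)}]
\item $\tilde s=\abs{\lambda}\,s\circ h$ and $\tilde T=\sgn(\lambda)\,T\circ h$;
\item $D\tilde T=\sgn(\lambda)\,\lambda\,DT\circ h=\abs{\lambda}\,DT\circ h$;
\item $\tilde T\times D\tilde T=\lambda\,(T\times DT)\circ h$, which gives \ref{F2}, and normalising gives $\tilde B=\sgn(\lambda)\,B\circ h$;
\item $\tilde N=\tilde B\times\tilde T=N\circ h$.
\end{enumerate}
Condition \ref{F1} transfers because $h$ is affine. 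The curvature, torsion and tangential component are then read off: $\tilde\kappa=D\tilde T\cdot\tilde N=\abs{\lambda}\,\kappa\circ h$, $\tilde\beta=D\tilde T\cdot\tilde T=\lambda\,\beta\circ h$, and $\tilde\tau=-D\tilde B\cdot\tilde N=\abs{\lambda}\,\tau\circ h$.

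For the angles I use \eqref{eq:angles}. The cosines are unchanged, while the sines change by the factor $\sgn(\lambda)$. For $\theta$ there is no ambiguity, since $\theta\neq\pi$ by \cref{thm:A}. For $\varphi$ the value $\pi$ is possible, which is why the rule for $\tilde\varphi$ holds only modulo $2\pi$.

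Lastly, $\ell(t)=\abs{\gamma(\sigma(t))-\gamma(t)}$, so $\tilde\ell=\ell\circ h$ directly from the conjugacy. The formulas for $\kappa/s$ and $\tau/s$ follow from \eqref{eq:invariants} and $\ell=\abs{\delta}s$. Their invariance then follows either from the rules in Step 3, since the factors $\abs{\lambda}$ cancel, or from the angle formulas, using $\sgn\delta(h(t))=\sgn(\lambda)\sgn\delta(t)$.
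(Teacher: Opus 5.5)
Your proposal is correct and follows the paper's proof essentially step for step. Like the paper, you show $h(\wo)=\wo$, use that $(h(t)-\wo)/(t-\wo)$ is constant along orbits, extract its limit at $\wo$ from the differentiability of $\gamma\circ h$ and $\gamma'(\wo)\neq0$, and derive all transformation rules from $\delta\circ h=\lambda\,\delta$ and the conjugacy. Taking the dot product with $\gamma'(\wo)$ merely spells out the limit step that the paper obtains by factoring the difference quotient.
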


\begin{proof}
Since $h$ is a monotone bijection between intervals, it is continuous. As $\sigma(h(\wo))=h(\sigma(\wo))=h(\wo)$ and $\wo$ is the only fixed point of $\sigma$, we have $h(\wo)=\wo$. For $t\in I'\setminus\{\wo\}$ put $g(t):=(h(t)-\wo)/(t-\wo)$; then $g$ has constant sign, and $g(\sigma(t))=q\,(h(t)-\wo)/\bigl(q\,(t-\wo)\bigr)=g(t)$ by \cref{lem:sigma}(a). Suppose that $\gamma\circ h$ is differentiable at $\wo$. Since $h(t)\to\wo$ as $t\to\wo$ and $\gamma'(\wo)\neq0$, the identity
\[
  \frac{\gamma(h(t))-\gamma(\wo)}{t-\wo}=g(t)\,\frac{\gamma(h(t))-\gamma(\wo)}{h(t)-\wo}
\]
shows that $g$ has a finite limit $\lambda$ at $\wo$. Hence $g(t)=g(\sigma^n(t))\to\lambda$ for every $t$, i.e.\ $g\equiv\lambda$, and $\lambda\neq0$ because $h$ is injective. Conversely, if $h$ is given by~\eqref{eq:scaling}, then $\gamma\circ h$ is differentiable at $\wo$ by the chain rule.

Now $\delta(h(t))=h(\sigma(t))-h(t)=\lambda\,\delta(t)$, so that $D\tilde\gamma=\lambda\,(D\gamma)\circ h$ on $I'\setminus\{\wo\}$, and also at $\wo$ by the chain rule. This gives $\tilde s=\abs{\lambda}\,s\circ h$ and $\tilde T=\sgn(\lambda)\,T\circ h$, and in the same way $D\tilde T=\abs{\lambda}\,(DT)\circ h$. Hence $\tilde T\times D\tilde T=\lambda\,(T\times DT)\circ h$, which yields the Frenet-regularity of $\tilde\gamma$, $\tilde B=\sgn(\lambda)\,B\circ h$ and $\tilde N=\tilde B\times\tilde T=N\circ h$; then $D\tilde B=\abs{\lambda}\,(DB)\circ h$, and the remaining formulas in~\eqref{eq:transf} follow from \cref{def:frame}. Moreover $\tilde\ell=\ell\circ h$, because $h\circ\sigma=\sigma\circ h$. Finally, the expressions for $\kappa/s$ and $\tau/s$ follow from~\eqref{eq:invariants}, \cref{cor:relations}(a) and $\delta s=\sgn(\delta)\,\ell$.
\end{proof}

Thus the parametrization of a Hahn curve is rigid: the only freedom is a linear change of parameter about $\wo$, and for orientation-preserving changes it is removed by the normalization $s(\wo)=\abs{\gamma'(\wo)}=1$. The next proposition describes the geometric object behind a Hahn curve without reference to a parameter.

\begin{proposition}\label{prop:coordfree}
Let $c\colon\mathcal X\to\R^3$ be a $C^1$ embedding parametrized by arc length on an interval $\mathcal X\ni0$, put $C:=c(\mathcal X)$ and $p:=c(0)$, and let $\Sigma\colon\mathcal X\to\mathcal X$ be of class $C^2$ with $\Sigma'>0$, $\Sigma(0)=0$, $\Sigma'(0)=q$ and $0<\Sigma(x)/x<1$ for $x\neq0$. Then the limit $\mathcal K:=\lim_{n\to\infty}q^{-n}\Sigma^n$ exists and is an increasing $C^1$-diffeomorphism of $\mathcal X$ onto an interval $\mathcal J\ni0$ with $\mathcal K\circ\Sigma=q\,\mathcal K$ and $\mathcal K'(0)=1$, and
\[
  \gamma(t):=c\bigl(\mathcal K^{-1}(t-\wo)\bigr),\qquad t\in I:=\wo+\mathcal J,
\]
is an injective regular Hahn curve of class $C^1$ with $\gamma(I)=C$, $\gamma(\wo)=p$, $\abs{\gamma'(\wo)}=1$ and
\begin{equation}\label{eq:conj}
  \gamma\circ\sigma=c\circ\Sigma\circ c^{-1}\circ\gamma .
\end{equation}
Every continuous injective regular Hahn curve $\gamma_1\colon I_1\to\R^3$ with $\gamma_1(I_1)=C$ that satisfies~\eqref{eq:conj} is of the form $\gamma_1=\gamma\circ h$ with an admissible reparametrization $h$. Conversely, if $I\ni\wo$ is an interval and $\gamma\colon I\to\R^3$ is a $C^2$ embedding, i.e.\ an injective $C^2$ curve with $\gamma'\neq0$ that is a homeomorphism onto its image, then its arc-length parametrization $c$ with $c(0)=\gamma(\wo)$ and the map $\Sigma:=c^{-1}\circ\gamma\circ\sigma\circ\gamma^{-1}\circ c$ satisfy these assumptions, and~\eqref{eq:conj} holds.
\end{proposition}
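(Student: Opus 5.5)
The proof splits into three parts: existence and properties of the Koenigs limit $\mathcal K$, the Hahn curve built from it and the uniqueness statement, and the converse.

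\emph{The Koenigs limit.} I would follow the classical Koenigs linearization argument for a $C^2$ contraction with $0<\Sigma'(0)=q<1$. Since $\Sigma$ is $C^2$ near $0$, Taylor's formula gives $\Sigma(x)=qx\,(1+r(x))$ with $\abs{r(x)}\le L\abs{x}$ on compact subintervals of $\mathcal X$. The hypothesis $0<\Sigma(x)/x<1$ gives $\Sigma^n(x)\to0$ monotonically. Near $0$, $\abs{\Sigma(x)}\le q'\abs{x}$ for any $q'\in(q,1)$, so the decay is in fact geometric. Write
\[
  q^{-n}\Sigma^n(x)=x\prod_{k=0}^{n-1}\bigl(1+r(\Sigma^k(x))\bigr).
\]
This product converges locally uniformly, because $\sum_k\abs{\Sigma^k(x)}$ converges uniformly on compacta. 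Hence $\mathcal K$ exists and is continuous.

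For the derivative, $(q^{-n}\Sigma^n)'(x)=\prod_{k<n}\Sigma'(\Sigma^k(x))/q$. Since $\Sigma'$ is $C^1$ and positive, $\abs{\Sigma'(y)/q-1}\le L'\abs{y}$, so this product also converges locally uniformly, to a positive continuous limit. This gives that $\mathcal K$ is $C^1$ with $\mathcal K'>0$ and $\mathcal K'(0)=1$. The functional equation $\mathcal K\circ\Sigma=q\mathcal K$ follows by shifting $n$. Monotonicity then makes $\mathcal K$ a $C^1$-diffeomorphism onto an interval $\mathcal J\ni0$. Points far from $0$ need a word: one reduces to a neighbourhood of $0$ by applying finitely many iterates, using $\mathcal K=q^{-m}\mathcal K\circ\Sigma^m$.

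\emph{The Hahn curve and uniqueness.} Set $\gamma(t)=c(\mathcal K^{-1}(t-\wo))$. By \cref{lem:sigma}(a), $\sigma(t)-\wo=q(t-\wo)$. From $\mathcal K^{-1}(qy)=\Sigma(\mathcal K^{-1}(y))$ we get $\gamma\circ\sigma=c\circ\Sigma\circ\mathcal K^{-1}(\cdot-\wo)$, which is \eqref{eq:conj}. The curve $\gamma$ is a composition of $C^1$ maps, hence Hahn differentiable with $\gamma'(\wo)=c'(0)/\mathcal K'(0)$, a unit vector. Regularity at $t\neq\wo$ follows from injectivity: $\gamma(\sigma(t))\neq\gamma(t)$ since $\sigma(t)\neq t$, so $D\gamma(t)\ne0$. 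Injectivity, $\gamma(I)=C$ and $\gamma(\wo)=p$ are immediate.

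For uniqueness, let $\gamma_1$ be a curve as in the statement. Define $h:=\gamma^{-1}\circ\gamma_1$. Both curves are continuous injections of intervals onto the arc $C$, and $c$ is a homeomorphism onto $C$, so $h$ is a monotone bijection. Equation \eqref{eq:conj} for both curves gives $h\circ\sigma=\sigma\circ h$. Then $\gamma\circ h=\gamma_1$ is Hahn differentiable, and \cref{prop:reparam} shows that $h$ is linear about $\wo$, hence admissible.

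\emph{The converse.} Now let $\gamma$ be a $C^2$ embedding and $c$ its arc-length parametrization with $c(0)=\gamma(\wo)$. Write $u:=c^{-1}\circ\gamma$, a $C^2$ increasing diffeomorphism with $u(\wo)=0$ (reversing the orientation of $c$ if needed). Then $\Sigma=u\circ\sigma\circ u^{-1}$ is $C^2$ with $\Sigma'>0$, $\Sigma(0)=0$ and $\Sigma'(0)=\sigma'(\wo)=q$. Conjugation by an increasing map preserves the property that $\sigma$ moves each point strictly toward the fixed point without crossing it. This gives $0<\Sigma(x)/x<1$, and \eqref{eq:conj} holds by construction.

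The main obstacle is the $C^1$ convergence in the first step. It needs the Lipschitz-type bound on $\Sigma'/q-1$, which is where $C^2$ is used, together with uniformity on all of $\mathcal X$ rather than only near $0$.
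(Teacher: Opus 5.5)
Your proposal is correct and follows essentially the same route as the paper: Koenigs products for $q^{-n}\Sigma^n$ and its derivative, with the $C^2$ bounds on $\Sigma(x)/(qx)-1$ and $\Sigma'/q-1$ supplying summability; extension away from $0$ via $\mathcal K=q^{-m}\mathcal K\circ\Sigma^m$; regularity off $\wo$ from injectivity; uniqueness through the monotone map $h=\gamma^{-1}\circ\gamma_1$ commuting with $\sigma$, combined with \cref{prop:reparam}; and the converse by conjugating $\sigma$ with the arc-length function. The details you leave implicit are routine: the uniform geometric decay of $\Sigma^k$ on compact subintervals, and the fact that $c^{-1}\circ\gamma$ is the $C^2$ function $\xi(t)=\int_{\wo}^{t}\abs{\gamma'}$.
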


\begin{proof}
Choose $\eta>0$ and $C_\Sigma>0$ with $C_\Sigma\eta<1$ such that $\abs{\Sigma(x)}\le q_1\abs{x}$, where $q_1:=(1+q)/2$, and $\abs{\Sigma'(x)/q-1}+\abs{\Sigma(x)/(qx)-1}\le C_\Sigma\abs{x}$ for $x\in\mathcal X$ with $0<\abs{x}\le\eta$; this is possible because $\Sigma$ is of class $C^2$ with $\Sigma(0)=0$ and $\Sigma'(0)=q$. For $x\in \mathcal X_\eta:=\mathcal X\cap[-\eta,\eta]$ the products
\[
  \frac{\Sigma^n(x)}{q^n}=x\prod_{k<n}\frac{\Sigma(\Sigma^k(x))}{q\,\Sigma^k(x)},\qquad \Bigl(\frac{\Sigma^n}{q^n}\Bigr)'(x)=\prod_{k<n}\frac{\Sigma'(\Sigma^k(x))}{q}
\]
converge uniformly, because their factors differ from $1$ by at most $C_\Sigma q_1^k\eta<1$. Hence $\mathcal K$ is of class $C^1$ on $\mathcal X_\eta$, with $\mathcal K'>0$, $\mathcal K'(0)=1$ and $\mathcal K\circ\Sigma=q\,\mathcal K$; this is Koenigs' linearization~\cite{Koenigs1884,KuczmaChoczewskiGer1990}. Since $0<\Sigma(x)/x<1$ for $x\neq0$, for every $x\in\mathcal X$ the numbers $\abs{\Sigma^m(x)}$ decrease to a limit $L$, the limit point of $\Sigma^m(x)$ is fixed by $\Sigma$ by continuity, and hence $L=0$; moreover $\mathcal K(x)=q^{-m}\mathcal K(\Sigma^m(x))$ for $m$ so large that $\abs{\Sigma^m(x)}\le\eta$; thus $\mathcal K$ is an increasing $C^1$-diffeomorphism of $\mathcal X$ onto an interval $\mathcal J\ni0$ with $\mathcal K\circ\Sigma=q\,\mathcal K$. By \cref{lem:sigma}(a), $\gamma(\sigma(t))=c\bigl(\mathcal K^{-1}(q(t-\wo))\bigr)=c\bigl(\Sigma(\mathcal K^{-1}(t-\wo))\bigr)$, which is~\eqref{eq:conj}. Moreover, $\gamma$ is of class $C^1$ with $\abs{\gamma'(\wo)}=1/\mathcal K'(0)=1$, and its Hahn speed is positive on $\Is$ because $\gamma$ is injective. If $\gamma_1$ is as in the statement, then $c^{-1}\circ\gamma_1$ is continuous and injective, hence monotone, so that $h:=\gamma^{-1}\circ\gamma_1=\wo+\mathcal K\circ c^{-1}\circ\gamma_1$ is a monotone bijection of $I_1$ onto $I$; it satisfies $h\circ\sigma=\sigma\circ h$ by~\eqref{eq:conj} and the injectivity of $\gamma$, and $\gamma\circ h=\gamma_1$ is Hahn differentiable. By \cref{prop:reparam}, $h$ is an admissible reparametrization.

For the converse, the arc-length function $\xi(t):=\int_{\wo}^{t}\abs{\gamma'(u)}\,du$ is a $C^2$-diffeomorphism of $I$ onto an interval $\mathcal X\ni0$, since $\xi'=\abs{\gamma'}$ is positive and of class $C^1$. Thus $c=\gamma\circ\xi^{-1}$ and $\Sigma=\xi\circ\sigma\circ\xi^{-1}$ are of class $C^2$, $\Sigma'>0$ and $\Sigma'(0)=\xi'(\wo)\,q\,(\xi^{-1})'(0)=q$. Finally, $0<\Sigma(x)/x<1$ for $x\neq0$ because $\sigma(t)$ lies strictly between $t$ and $\wo$ (\cref{lem:sigma}(a)), and~\eqref{eq:conj} holds by construction.
\end{proof}

\begin{remark}\label{rem:coordfree}
By \cref{prop:reparam,prop:coordfree}, under the smoothness assumptions of \cref{prop:coordfree} a continuous injective regular Hahn curve is, up to a linear change of parameter about $\wo$, the same as an arc together with a contraction along it, and the Hahn parameter is the Koenigs coordinate of the contraction. The uniqueness part needs no smoothness: if $\gamma_1\colon I_1\to\R^3$ and $\gamma_2\colon I_2\to\R^3$ are regular Hahn curves that are topological embeddings with the same trace and $\gamma_1\circ\sigma\circ\gamma_1^{-1}=\gamma_2\circ\sigma\circ\gamma_2^{-1}$, then $h:=\gamma_2^{-1}\circ\gamma_1$ is a homeomorphism of $I_1$ onto $I_2$ with $h\circ\sigma=\sigma\circ h$ and $\gamma_2\circ h=\gamma_1$, so that $h$ is a linear change of parameter about $\wo$ by \cref{prop:reparam}. Regularity cannot be dropped: for $\gamma(t)=((t-\wo)^3,0,0)$ and every increasing bijection $h$ with $h\circ\sigma=\sigma\circ h$, the curve $\gamma\circ h$ is again Hahn differentiable. The parametrization is thus part of the geometric data rather than a gauge freedom: the same arc with different contractions carries different orbit polygons and different Hahn invariants. In particular, a smooth curve sampled with two parametrizations that do not differ by a linear change of parameter about $\wo$ yields two different Hahn curves; this applies to the comparison with classical invariants in \cref{sec:limits}. For Frenet-regular curves normalized by $\abs{\gamma'(\wo)}=1$ and with a fixed orientation, the functions $s$, $\kappa$, $\tau$, $\beta$, $\theta$ and $\varphi$ are invariants of the arc together with the contraction under proper rigid motions. In terms of the orbit polygons, the parametrization-free data are the edge lengths $\ell$ and the angles $\theta$ and $\varphi$ along the orbits of the contraction, as functions of the Koenigs coordinate; by \cref{prop:reparam} they transform by composition with $h$, the angles changing sign when the orientation is reversed.
\end{remark}

\section{The fundamental theorem and the planarity criterion}\label{sec:fundamental}

We use the operator norm on $3\times3$ matrices.

\begin{lemma}\label{lem:products}
The following hold.
\begin{enumerate}[label=\textup{(\alph*)}]
\item For $\theta,\varphi\in\R$, $\norm{R(\theta,\varphi)-I}\le\abs{\theta}+\abs{\varphi}$.
\item Let $R_k\in\SO$, $k\ge0$, with $A:=\sum_k\norm{R_k-I}<\infty$. Then $\Pi_n:=R_{n-1}\cdots R_1R_0$ converges in $\SO$ to a limit $\Pi_\infty$, with $\norm{\Pi_\infty-\Pi_n}\le\sum_{k\ge n}\norm{R_k-I}$ and $\norm{\Pi_\infty-I-\sum_{k\ge0}(R_k-I)}\le A^2$.
\end{enumerate}
\end{lemma}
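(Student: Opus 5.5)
For (a), I will use $R(\theta,\varphi)-I=X(\varphi)Z(\theta)-I=X(\varphi)\bigl(Z(\theta)-I\bigr)+\bigl(X(\varphi)-I\bigr)$. The first term has norm $\norm{Z(\theta)-I}$, because $X(\varphi)$ is orthogonal. So it suffices to show $\norm{Z(\theta)-I}\le\abs{\theta}$, and likewise for $X$. The matrix $Z(\theta)-I$ acts as zero on the third coordinate. On the first two coordinates it equals a planar rotation minus the identity, which is $\abs{e^{i\theta}-1}$ times a rotation. Hence its operator norm is $\abs{e^{i\theta}-1}=2\abs{\sin(\theta/2)}\le\abs{\theta}$. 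The same argument applies to $X(\varphi)$, and the triangle inequality gives (a).

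For (b), I first get the Cauchy property. Since $\Pi_{n+1}-\Pi_n=(R_n-I)\Pi_n$ and $\norm{\Pi_n}=1$, we have $\norm{\Pi_{n+1}-\Pi_n}\le\norm{R_n-I}$. Telescoping gives $\norm{\Pi_m-\Pi_n}\le\sum_{k=n}^{m-1}\norm{R_k-I}$ for $m>n$. The series converges, so $(\Pi_n)$ is Cauchy and has a limit $\Pi_\infty$. This limit lies in $\SO$ because $\SO$ is closed. Letting $m\to\infty$ gives the tail bound.

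For the second-order bound, I write $E_k:=R_k-I$ and expand
\[
  \Pi_n=\prod_{k=n-1}^{0}(I+E_k)=I+\sum_{k<n}E_k+\sum_{j\ge2}\ \sum_{n>k_1>\dots>k_j\ge0}E_{k_1}\cdots E_{k_j}.
\]
The norm of the remainder is at most
\[
  \sum_{j\ge2}\frac{A^j}{j!}=e^A-1-A.
\]
This is $\le A^2$ only when $A$ is small, so the crude expansion fails for large $A$. I will therefore argue differently, using that $\Pi_n$ is orthogonal.

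Set $S_n:=\sum_{k<n}E_k$ and $D_n:=\Pi_n-I-S_n$. From $\Pi_{n+1}=(I+E_n)\Pi_n$ I get the recursion $D_{n+1}=D_n+E_n(\Pi_n-I)$. I then bound $\norm{\Pi_n-I}$ in two ways. First, the telescoping estimate from $\Pi_0=I$ gives $\norm{\Pi_n-I}\le\sum_{k<n}\norm{E_k}\le A$. Second, both $\Pi_n$ and $I$ are orthogonal, so $\norm{\Pi_n-I}\le2$. Using the first bound, $\norm{D_{n+1}-D_n}\le\norm{E_n}\sum_{k<n}\norm{E_k}$. Summing over $n$ gives
\[
  \norm{D_n}\le\sum_{k<n'}\norm{E_{n'}}\,\norm{E_k}\le\tfrac12A^2\le A^2 .
\]
Letting $n\to\infty$ yields the claim, because $S_n\to\sum_k E_k$ absolutely.

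The main obstacle is getting the quadratic constant without assuming $A$ small. The recursion for $D_n$, which uses only first-order control of $\Pi_n-I$, avoids the exponential constant that the full expansion produces.
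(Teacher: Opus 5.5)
Your proof is correct and follows the paper's argument. In (a) you use the same splitting $X(\varphi)\bigl(Z(\theta)-I\bigr)+\bigl(X(\varphi)-I\bigr)$ with $\norm{Z(\theta)-I}=2\abs{\sin(\theta/2)}$; in (b) you use the same telescoping Cauchy estimate and the same recursion $D_{n+1}=D_n+E_n(\Pi_n-I)$ with $\norm{\Pi_n-I}\le A$. Your summation even gives the slightly sharper constant $\tfrac12A^2$, and the exponential-expansion detour and the unused bound $\norm{\Pi_n-I}\le2$ can simply be dropped.
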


\begin{proof}
(a) $Z(\theta)-I$ is a normal matrix with eigenvalues $0$ and $e^{\pm i\theta}-1$, so $\norm{Z(\theta)-I}=2\abs{\sin(\theta/2)}\le\abs{\theta}$; similarly $\norm{X(\varphi)-I}\le\abs{\varphi}$. For orthogonal $U,V$ we have $\norm{UV-I}\le\norm{U(V-I)}+\norm{U-I}=\norm{V-I}+\norm{U-I}$.

(b) Put $a_k:=\norm{R_k-I}$. By the last inequality and induction, $\norm{R_m\cdots R_n-I}\le\sum_{k=n}^{m}a_k$; hence $\norm{\Pi_{m+1}-\Pi_n}=\norm{(R_m\cdots R_n-I)\Pi_n}\le\sum_{k=n}^{m}a_k$. Thus $(\Pi_n)$ is a Cauchy sequence, and its limit belongs to the closed set $\SO$. Finally, $E_n:=\Pi_n-I-\sum_{k<n}(R_k-I)$ satisfies $E_0=0$ and $E_{n+1}=E_n+(R_n-I)(\Pi_n-I)$; since $\norm{\Pi_n-I}\le A$, we get $\norm{E_n}\le A\sum_{k<n}a_k\le A^2$.
\end{proof}

\begin{theorem}[fundamental theorem]\label{thm:B}
Let $\kappa\colon I\to(0,\infty)$ and $\tau\colon I\to\R$ be continuous and assume
\begin{equation}\label{eq:bound}
  \delta(t)^2\bigl(\kappa(t)^2+\tau(t)^2\bigr)<1\qquad(t\in I).
\end{equation}
\begin{enumerate}[label=\textup{(\alph*)}]
\item For every $p\in\R^3$ and every $F_0\in\SO$ there is a unit-speed Frenet-regular Hahn curve $\gamma\colon I\to\R^3$ with $\gamma(\wo)=p$ and $F(\wo)=F_0$ whose Hahn curvature and Hahn torsion are $\kappa$ and $\tau$ on $I$. Its Hahn Frenet frame $F$ is continuous on $I$ and differentiable at $\wo$, with $F'(\wo)=M_0F_0$, where
\[
  M_0=\begin{pmatrix}0&\kappa(\wo)&0\\-\kappa(\wo)&0&\tau(\wo)\\0&-\tau(\wo)&0\end{pmatrix}.
\]
\item If $\tilde\gamma\colon I\to\R^3$ is a unit-speed Frenet-regular Hahn curve whose Hahn curvature and Hahn torsion coincide with $\kappa$ and $\tau$ on $\Is$, then $\tilde\gamma=U\gamma+b$ for some $U\in\SO$ and $b\in\R^3$.
\end{enumerate}
\end{theorem}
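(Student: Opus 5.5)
For existence, I would first turn the prescribed $\kappa,\tau$ into angles. By \cref{cor:relations}(d) I define, for $t\in\Is$,
\[
\varrho_1=\sqrt{1-\delta^2\kappa^2},\qquad \theta=\arcsin(\delta\kappa),\qquad \varphi=\arcsin\bigl(\delta\tau/\varrho_1\bigr).
\]
Both arcsines are well defined because $\delta^2\tau^2/\varrho_1^2<1$ is equivalent to~\eqref{eq:bound}. These functions are continuous on $\Is$, satisfy $\abs{\theta},\abs{\varphi}<\pi/2$, and obey $\sin\theta=\delta\kappa$, $\cos\theta\sin\varphi=\delta\tau$. Put $R(t):=R(\theta(t),\varphi(t))$. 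By \cref{lem:products}(a), $\norm{R(t)-I}\le\abs{\theta}+\abs{\varphi}\le C\abs{\delta(t)}$ locally uniformly near $\wo$, since $\kappa,\tau$ are bounded there and $\arcsin x\le \frac\pi2\abs{x}$. Along an orbit, $\sum_k\norm{R(\sigma^k t)-I}\le C\abs{t-\wo}$ by \cref{lem:sigma}(c). By \cref{lem:products}(b) the product $\Pi(t):=\lim_n R(\sigma^{n-1}t)\cdots R(t)$ therefore exists in $\SO$. I then define
\[
F(t):=\Pi(t)^{-1}F_0,\qquad F(\wo)=F_0.
\]
By construction $\Pi(t)=\Pi(\sigma t)R(t)$, so $F(\sigma t)=R(t)F(t)$, which is the relation of \cref{thm:A}.

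The next step is the regularity of $F$, which I expect to be the main obstacle. Continuity on $\Is$ should follow from uniform convergence of the products on compact subsets of $\Is$, since each factor is continuous and the tail bound of \cref{lem:products}(b) is uniform. Continuity and differentiability at $\wo$ come from the second estimate of \cref{lem:products}(b):
\[
\Pi(t)=I+\sum_k\bigl(R(\sigma^kt)-I\bigr)+O(\abs{t-\wo}^2).
\]
Expand $R(\sigma^kt)-I=\delta(\sigma^kt)\,\bigl(M_0^{\top}\text{-type skew matrix}\bigr)+o(\delta(\sigma^k t))$, using the first-order expansion of $R(\theta,\varphi)$ together with $\theta/\delta\to\kappa(\wo)$ and $\varphi/\delta\to\tau(\wo)$, which follow from continuity of $\kappa,\tau$. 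Summing with $\sum_k\delta(\sigma^kt)=\wo-t$ gives $\Pi(t)=I-(t-\wo)\,S+o(t-\wo)$ for the appropriate skew matrix $S$. Hence $F(t)=F_0+(t-\wo)M_0F_0+o(t-\wo)$. The main care needed is checking the signs, so that one gets $M_0$ and not its transpose.

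Then let $T$ be the first row of $F$ and define $\gamma(t)=p+\int_{\wo}^tT\,d_{q,\omega}x$. By \cref{lem:JN}(b), $D\gamma=T$ on $I$, so $\gamma$ is unit speed. Since $T(\sigma t)=\cos\theta T+\sin\theta N$, we get $T\times DT=(\sin\theta/\delta)B=\kappa B\neq0$ on $\Is$. At $\wo$, $T'(\wo)=\kappa(\wo)N_0$. So the curve is Frenet-regular and its Hahn frame is $F$, because $\kappa>0$. Reading off $DT\cdot N=\sin\theta/\delta=\kappa$ and $-DB\cdot N=\cos\theta\sin\varphi/\delta=\tau$, and at $\wo$ using $F'(\wo)=M_0F_0$, completes (a).

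For uniqueness, let $\tilde\gamma$ be as in (b). By \cref{thm:A} and \cref{cor:relations}(d), its angles satisfy $\sin\tilde\theta=\delta\kappa$, $\cos\tilde\theta\sin\tilde\varphi=\delta\tau$ and $\abs{\tilde\theta},\abs{\tilde\varphi}<\pi/2$. Hence $\tilde\theta=\theta$ and $\tilde\varphi=\varphi$, and $\tilde F(\sigma t)=R(t)\tilde F(t)$. Choose $U:=\tilde F(\wo)^{\top}F_0$, where the rows are transformed accordingly, so that $G(t):=\tilde F(t)^{\top}F(t)$ — more precisely, take $G(t)=F(t)^{\top}\tilde F(t)$. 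Then
\[
G(\sigma t)=F^{\top}R^{\top}R\tilde F=G(t).
\]
$G$ is continuous at $\wo$ by \ref{F1} and \cref{lem:continuityDf}, so $G$ is constant by \cref{lem:JN}(c); this is exactly where continuity couples the orbits. Thus $\tilde T=UT$ for a constant $U\in\SO$. Finally, $D(\tilde\gamma-U\gamma)=0$ and $\tilde\gamma-U\gamma$ is continuous at $\wo$, so \cref{lem:JN}(c) gives $\tilde\gamma=U\gamma+b$.
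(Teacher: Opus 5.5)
Your proposal is correct and follows essentially the same route as the paper: arcsine step angles, the infinite rotation product $\Pi$ with $F=\Pi^{\top}F_0$, the first-order expansion of $\Pi$ at $\wo$ to get $F'(\wo)=M_0F_0$, the Jackson--N\"orlund integral for $\gamma$, and uniqueness from the identification $\tilde\theta=\theta$, $\tilde\varphi=\varphi$ together with continuity of the frame at $\wo$. The only differences are cosmetic. You obtain uniqueness by applying \cref{lem:JN}(c) to the $\sigma$-invariant matrix $G=F^{\top}\tilde F$; after your self-correction this does give $\tilde T=UT$ with $U=\tilde F(\wo)^{\top}F_0\in\SO$. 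The paper instead iterates $\tilde F(\sigma^n(t))=R(\sigma^{n-1}(t))\cdots R(t)\tilde F(t)$ and passes to the limit.
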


\begin{proof}
\emph{Step 1 (step rotations).} By~\eqref{eq:bound}, $\abs{\delta\kappa}<1$ and $\abs{\delta\tau}<\sqrt{1-\delta^2\kappa^2}$, so the functions
\[
  \theta(t):=\arcsin\bigl(\delta(t)\kappa(t)\bigr),\qquad \varphi(t):=\arcsin\Bigl(\frac{\delta(t)\tau(t)}{\cos\theta(t)}\Bigr)
\]
are well defined and continuous on $I$, take values in $(-\pi/2,\pi/2)$, satisfy $\theta(\wo)=\varphi(\wo)=0$, and $\sgn\theta=\sgn\delta$ on $\Is$ because $\kappa>0$. Let $R(t):=R(\theta(t),\varphi(t))$; then $R$ is continuous and $R(\wo)=I$.

\emph{Step 2 (estimates near $\wo$).} Choose $\eta>0$ and $C_1>0$ such that $\kappa\le C_1$, $\abs{\tau}\le C_1$ and $C_1\abs{\delta}\le\frac12$ on $\mathcal U:=I\cap(\wo-\eta,\wo+\eta)$. On $\mathcal U$ we then have $\cos\theta\ge\sqrt3/2$ and, since $\abs{\arcsin x}\le\frac\pi2\abs{x}$ for $\abs{x}\le1$,
\begin{equation}\label{eq:C0}
  \abs{\theta}+\abs{\varphi}\le\tfrac{\pi}{2}\bigl(1+2/\sqrt3\bigr)C_1\abs{\delta}=:C_0\abs{\delta},
\end{equation}
so that $\norm{R(t)-I}\le C_0\abs{\delta(t)}$ on $\mathcal U$ by \cref{lem:products}(a). For $t\in\mathcal U\setminus\{\wo\}$ put $M(t):=(R(t)-I)/\delta(t)$; then $\norm{M(t)}\le C_0$, and its entries are given by~\eqref{eq:M}. Since $\sin\theta/\delta=\kappa$, $\cos\theta\sin\varphi/\delta=\tau$, $\sin\varphi/\delta=\tau/\cos\theta$, while $(1-\cos\theta)/\delta$, $(1-\cos\varphi)/\delta$, $(1-\cos\theta\cos\varphi)/\delta$ and $\sin\theta\sin\varphi/\delta$ are $O(\abs{\delta})$ by~\eqref{eq:C0}, the continuity of $\kappa$ and $\tau$ at $\wo$ gives $M(t)\to M_0$ as $t\to\wo$.

\emph{Step 3 (the frame).} Fix $t\in I$. As $\sigma^k(t)\to\wo$, there is $k_0$ with $\sigma^k(t)\in\mathcal U$ for $k\ge k_0$, and then $\sum_{k\ge k_0}\norm{R(\sigma^k(t))-I}\le C_0\sum_kq^k\abs{\delta(t)}<\infty$ by \cref{lem:sigma}(c). By \cref{lem:products}(b), the limit
\[
  \Pi(t):=\lim_{n\to\infty}R(\sigma^{n-1}(t))\cdots R(\sigma(t))R(t)\in\SO
\]
exists. By construction,
\begin{equation}\label{eq:Pi}
  \begin{gathered}
  \Pi(t)=\Pi(\sigma(t))R(t)\quad(t\in I),\qquad \Pi(\wo)=I,\\
  \norm{\Pi(t)-I}\le C_0\abs{t-\wo}\quad(t\in\mathcal U),
  \end{gathered}
\end{equation}
the last estimate because $\sum_kC_0q^k\abs{\delta(t)}=C_0\abs{t-\wo}$. On a compact interval $L\subset I$ the index $k_0$ can be chosen uniformly, since $\abs{\sigma^k(t)-\wo}\le q^k\max_L\abs{t-\wo}$, so the tail estimate of \cref{lem:products}(b) is uniform on $L$; thus $\Pi$ is a uniform limit of continuous functions on $L$, and hence continuous on $I$. Define
\[
  F(t):=\Pi(t)^{\top}F_0\qquad(t\in I).
\]
Then $F(t)\in\SO$, i.e.\ its rows $T(t),N(t),B(t)$ form a positively oriented orthonormal frame, $F$ is continuous on $I$, $F(\wo)=F_0$, and~\eqref{eq:Pi} yields
\begin{equation}\label{eq:Fstep}
  F(\sigma(t))=R(t)F(t)\qquad(t\in I).
\end{equation}

\emph{Step 4 (differentiability at $\wo$).} Let $t\in\mathcal U\setminus\{\wo\}$. By \cref{lem:products}(b) and~\eqref{eq:Pi},
\[
  \Pi(t)=I+\sum_{k\ge0}\bigl(R(\sigma^k(t))-I\bigr)+E(t),\qquad \norm{E(t)}\le C_0^2\abs{t-\wo}^2,
\]
and $\sum_k\bigl(R(\sigma^k(t))-I\bigr)=\delta(t)\sum_kq^kM(\sigma^k(t))$. For each $k$, $M(\sigma^k(t))\to M_0$ as $t\to\wo$, and $\norm{q^kM(\sigma^k(t))}\le C_0q^k$; by dominated convergence $\sum_kq^kM(\sigma^k(t))\to M_0/(1-q)$. Since $\delta(t)/(1-q)=\wo-t$, we obtain $\Pi(t)=I+(\wo-t)M_0+o(\abs{t-\wo})$ and, $M_0$ being skew-symmetric,
\[
  F(t)=F_0+(t-\wo)M_0F_0+o(\abs{t-\wo}).
\]
Hence $F$ is differentiable at $\wo$ with $F'(\wo)=M_0F_0$.

\emph{Step 5 (the curve).} Let $\gamma(t):=p+\int_{\wo}^{t}T(x)\,d_{q,\omega}x$, where $T$ is the first row of $F$. By \cref{lem:JN}, $\gamma$ is continuous at $\wo$, $\gamma(\wo)=p$ and $D\gamma=T$ on $I$. Thus $\gamma$ is a unit-speed Hahn curve with Hahn unit tangent $T$, and \ref{F1} holds by Steps 3 and 4.

\emph{Step 6 (identification).} For $t\in\Is$ the first row of~\eqref{eq:Fstep} reads $T(\sigma(t))=\cos\theta\,T(t)+\sin\theta\,N(t)$; hence $T\times DT=(\sin\theta/\delta)\,T\times N=\kappa B$ with $\kappa>0$. At $\wo$, Step 4 gives $T'(\wo)=\kappa(\wo)N(\wo)$, so $T\times DT=\kappa(\wo)B$ there as well. Therefore \ref{F2} holds, $\gamma$ is Frenet-regular, and its Hahn Frenet frame is $F$. By \cref{thm:A} applied to $\gamma$, $F(\sigma(t))=R(\theta_\gamma(t),\varphi_\gamma(t))F(t)$ with the turning and torsion angles $\theta_\gamma,\varphi_\gamma$ of $\gamma$; comparing with~\eqref{eq:Fstep}, and observing that the entries $(1,1),(1,2),(2,3),(3,3)$ of $R(\theta,\varphi)$ determine $\theta$ and $\varphi$ modulo $2\pi$, we get $\theta_\gamma=\theta$ and $\varphi_\gamma=\varphi$ on $\Is$. Now~\eqref{eq:invariants} gives $\kappa_\gamma=\sin\theta/\delta=\kappa$ and $\tau_\gamma=\cos\theta\sin\varphi/\delta=\tau$ on $\Is$, and Step 4 gives $\kappa_\gamma(\wo)=T'(\wo)\cdot N(\wo)=\kappa(\wo)$ and $\tau_\gamma(\wo)=-B'(\wo)\cdot N(\wo)=\tau(\wo)$.

\emph{Step 7 (uniqueness).} Let $\tilde\gamma$ be as in (b), with frame $\tilde F$ and angles $\tilde\theta,\tilde\varphi$. By~\eqref{eq:bound} and \cref{cor:relations}(d), $\abs{\tilde\theta},\abs{\tilde\varphi}<\pi/2$, $\sin\tilde\theta=\delta\kappa$ and $\sin\tilde\varphi=\delta\tau/\sqrt{1-\delta^2\kappa^2}$; hence $\tilde\theta=\theta$ and $\tilde\varphi=\varphi$ on $\Is$, and \cref{thm:A} gives $\tilde F(\sigma(t))=R(t)\tilde F(t)$. Iterating, $\tilde F(\sigma^n(t))=R(\sigma^{n-1}(t))\cdots R(t)\tilde F(t)$, and letting $n\to\infty$ we obtain from the continuity of $\tilde F$ at $\wo$ that $\tilde F(\wo)=\Pi(t)\tilde F(t)$, i.e.\ $\tilde F(t)=\Pi(t)^{\top}\tilde F(\wo)$. Let $U\in\SO$ be the rotation that maps the frame $F_0$ onto $\tilde F(\wo)$, so that $\tilde F(\wo)=F_0U^{\top}$. Then $\tilde F(t)=F(t)U^{\top}$, i.e.\ $\tilde T=UT$ on $I$. The map $\tilde\gamma-U\gamma$ is continuous at $\wo$ and satisfies $D(\tilde\gamma-U\gamma)=\tilde T-UT=0$ on $\Is$, so it is constant by \cref{lem:JN}(c).
\end{proof}

\begin{remark}\label{rem:speed}
(i) If, in addition, a continuous function $s\colon I\to(0,\infty)$ is prescribed, then $\gamma(t)=p+\int_{\wo}^{t}s(x)T(x)\,d_{q,\omega}x$ is a regular Hahn curve with Hahn speed $s$ and with the same frame, curvature and torsion; thus $(s,\kappa,\tau)$ determine a Frenet-regular Hahn curve up to proper rigid motions. In contrast to the classical case, unit speed is a genuine restriction and not a normalization: by \cref{prop:reparam}, an admissible reparametrization changes $s$ into $\abs{\lambda}\,s\circ h$, so that a Frenet-regular Hahn curve with non-constant Hahn speed is not a reparametrization of a unit-speed one.
(ii) The initial condition at $\wo$ is natural: there the Hahn Frenet equations are the classical ones (\cref{prop:fixedpoint}), and the frame at any other point is obtained from $F(\wo)$ by the rotation $\Pi(t)^{\top}$. An initial frame at $t_0\in\Is$ may be prescribed instead, since $F(\wo)=\Pi(t_0)F(t_0)$.
(iii) Continuity of the frame at $\wo$ is essential for uniqueness: the relation $F(\sigma(t))=R(t)F(t)$ alone only links the values of $F$ along each orbit (\cref{rem:orbits}). Continuity on all of $I$ is needed as well, since it is used in \cref{cor:relations}(d) to recover $\cos\theta>0$ and $\cos\varphi>0$ from $(\kappa,\tau)$. Indeed, let $t_1\in\Is$ with $\sigma^{-1}(t_1)\notin I$ (such a point exists whenever $\sigma(I)\neq I$, for instance if $I$ is bounded), replace the torsion angle $\varphi(t_1)$ by $\pi-\varphi(t_1)$, i.e.\ $F(t_1)$ by $\tilde F(t_1):=R(\theta(t_1),\pi-\varphi(t_1))^{\top}F(\sigma(t_1))$, and $\gamma(t_1)$ by $\gamma(\sigma(t_1))-\delta(t_1)\tilde T(t_1)$, where $\tilde T(t_1)$ is the first row of $\tilde F(t_1)$. This changes neither $\kappa$ nor $\beta$ nor $\tau$ on $I$, but, since $\abs{\varphi(t_1)}<\pi/2$, it produces a unit-speed Hahn curve that is not congruent to $\gamma$ and whose tangent is discontinuous at $t_1$.
\end{remark}

Condition~\eqref{eq:bound} is exactly what is needed to recover the angles from $(\kappa,\tau)$ (\cref{cor:relations}). If the angles themselves are prescribed, no such condition is required.

\begin{theorem}\label{thm:Bangle}
Let $\theta,\varphi\colon I\to\R$ be continuous, with $\theta(\wo)=\varphi(\wo)=0$ and $0<\sgn(\delta(t))\,\theta(t)<\pi$ for $t\in\Is$. Assume that the limits $\kappa_0:=\lim_{t\to\wo}\theta(t)/\delta(t)>0$ and $\tau_0:=\lim_{t\to\wo}\varphi(t)/\delta(t)$ exist. Then for every $p\in\R^3$ and $F_0\in\SO$ there is a unit-speed Frenet-regular Hahn curve $\gamma$ with $\gamma(\wo)=p$, $F(\wo)=F_0$, turning angle $\theta$ and torsion angle congruent to $\varphi$ modulo $2\pi$; it is unique up to proper rigid motions. Its invariants are $\kappa=\sin\theta/\delta$, $\beta=(\cos\theta-1)/\delta$ and $\tau=\cos\theta\sin\varphi/\delta$ on $\Is$, and $\kappa(\wo)=\kappa_0$, $\tau(\wo)=\tau_0$.
\end{theorem}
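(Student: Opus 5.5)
I will follow the proof of \cref{thm:B} almost verbatim, with the prescribed angles replacing the angles reconstructed from $(\kappa,\tau)$. Put $R(t):=R(\theta(t),\varphi(t))$, which is continuous on $I$ with $R(\wo)=I$. The existence of the limits $\kappa_0$ and $\tau_0$ replaces Step~2. Since $\theta/\delta$ and $\varphi/\delta$ are continuous on $\Is$ and converge at $\wo$, they are bounded on some $\mathcal U=I\cap(\wo-\eta,\wo+\eta)$, say by $C_1$. Then $\norm{R(t)-I}\le 2C_1\abs{\delta(t)}$ on $\mathcal U$ by \cref{lem:products}(a). Moreover $M(t):=(R(t)-I)/\delta(t)$ is bounded on $\mathcal U\setminus\{\wo\}$ and tends to
\[
M_0=\begin{pmatrix}0&\kappa_0&0\\-\kappa_0&0&\tau_0\\0&-\tau_0&0\end{pmatrix}.
\]
To see this, read off the entries from~\eqref{eq:M}: $\sin\theta/\delta\to\kappa_0$, $\sin\varphi/\delta\to\tau_0$, $\cos\theta\sin\varphi/\delta\to\tau_0$, and the terms $(1-\cos\theta)/\delta$, $(1-\cos\varphi)/\delta$, $(1-\cos\theta\cos\varphi)/\delta$, $\sin\theta\sin\varphi/\delta$ are $O(\abs{\delta})$. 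No condition like~\eqref{eq:bound} enters anywhere.

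Steps~3--5 of the proof of \cref{thm:B} then carry over unchanged. First, $\Pi(t)$ is the infinite product of the $R(\sigma^k(t))$; it is continuous on $I$, and $\Pi(\wo)=I$. Next, $F:=\Pi^{\top}F_0$ satisfies $F(\sigma(t))=R(t)F(t)$ and is differentiable at $\wo$ with $F'(\wo)=M_0F_0$. Finally, $\gamma:=p+\int_{\wo}^{t}T\,d_{q,\omega}x$ satisfies $D\gamma=T$ by \cref{lem:JN}. For the identification (Step~6), the first row gives $T\times DT=(\sin\theta/\delta)B$ on $\Is$, and $\sin\theta/\delta>0$ by the hypothesis $0<\sgn(\delta)\theta<\pi$. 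At $\wo$ we have $T'(\wo)=\kappa_0N(\wo)$ with $\kappa_0>0$. Hence \ref{F2} holds and the Hahn frame of $\gamma$ is $F$, with the correct sign of $B$. By \cref{thm:A}, $R(\theta_\gamma,\varphi_\gamma)=R(\theta,\varphi)$ on $\Is$. The entries $(1,1),(1,2)$ determine $\theta$ modulo $2\pi$, and since both $\theta_\gamma$ and $\theta$ lie in $(-\pi,\pi]$, this gives $\theta_\gamma=\theta$. The entries $(2,3),(3,3)$ give $\varphi_\gamma\equiv\varphi$ modulo $2\pi$; here $\varphi$ itself is only prescribed as a real function, which is why the statement asserts only congruence. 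The formulas for $\kappa,\beta,\tau$ on $\Is$ are then~\eqref{eq:invariants}, and $\kappa(\wo)=\kappa_0$, $\tau(\wo)=-B'(\wo)\cdot N(\wo)=\tau_0$ follow from $F'(\wo)=M_0F_0$.

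Uniqueness is where the angle version is simpler than Step~7, since no recovery of angles from $(\kappa,\tau)$ is needed. Let $\tilde\gamma$ be a unit-speed Frenet-regular curve with turning angle $\theta$ and torsion angle congruent to $\varphi$. By \cref{thm:A}, $\tilde F(\sigma(t))=R(t)\tilde F(t)$, because $R$ is $2\pi$-periodic in each angle. Iterating and using continuity of $\tilde F$ at $\wo$ gives $\tilde F(t)=\Pi(t)^{\top}\tilde F(\wo)$. The rest is as in Step~7: set $\tilde F(\wo)=F_0U^{\top}$; then $\tilde T=UT$, and \cref{lem:JN}(c) gives $\tilde\gamma=U\gamma+b$.

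I expect the only delicate point to be the estimate near $\wo$, namely the boundedness of $M$ and its convergence to $M_0$. This requires the angles to be $O(\abs{\delta})$, and the limit hypotheses on $\theta/\delta$ and $\varphi/\delta$ supply exactly that. Everything else is bookkeeping from \cref{thm:B}.
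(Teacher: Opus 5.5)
Your proposal is correct and follows the paper's proof essentially step for step. The limit hypotheses replace Step~2 of \cref{thm:B} by giving $\abs{\theta}+\abs{\varphi}=O(\abs{\delta})$ and $(R-I)/\delta\to M_0$. Steps~3--6 then carry over, with $\kappa>0$ replaced by $\sin\theta/\delta>0$. Uniqueness follows from Step~7, because two curves with the same turning angle and torsion angles congruent modulo $2\pi$ have the same step rotations. Your remarks on why $\theta_\gamma=\theta$ holds exactly while $\varphi_\gamma$ agrees only modulo $2\pi$ are accurate details that the paper leaves implicit.
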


\begin{proof}
Put $R(t):=R(\theta(t),\varphi(t))$. Since $\theta/\delta$ and $\varphi/\delta$ have finite limits at $\wo$, there are $\eta,C_0>0$ with $\abs{\theta}+\abs{\varphi}\le C_0\abs{\delta}$ on $\mathcal U=I\cap(\wo-\eta,\wo+\eta)$, and~\eqref{eq:M} shows that $(R(t)-I)/\delta(t)$ tends to the matrix $M_0$ built from $\kappa_0$ and $\tau_0$. Steps 3--6 of the proof of \cref{thm:B} now apply verbatim; in Step 6 the condition $\kappa>0$ is replaced by $\sgn\theta=\sgn\delta$, which gives $\sin\theta/\delta>0$, the comparison of the step rotations gives $\theta_\gamma=\theta$ and $\varphi_\gamma\equiv\varphi\pmod{2\pi}$, and the invariants follow from~\eqref{eq:invariants}. Two such curves with the same turning angle and with torsion angles that are congruent modulo $2\pi$ have the same step rotations $R(t)$, so Step 7 applies.
\end{proof}

The torsion angle~\eqref{eq:angles} of the curve of \cref{thm:Bangle} is the representative of $\varphi(t)$ modulo $2\pi$ in $(-\pi,\pi]$, so it coincides with $\varphi$ wherever $\abs{\varphi}<\pi$. No bound on $\varphi$ is needed, since $R(\theta,\varphi)$ depends on $\varphi$ only modulo $2\pi$. In particular, \cref{thm:Bangle} also yields curves whose torsion angle reaches $\pi$; where $\varphi$ crosses an odd multiple of $\pi$, the torsion angle~\eqref{eq:angles} jumps by $2\pi$, although the frame is continuous. Conversely, let $\gamma$ be a unit-speed Frenet-regular Hahn curve whose binormal is differentiable at $\wo$. By~\eqref{eq:angles}, $\cos\varphi$ and $\sin\varphi$ are continuous on $\Is$ and tend to $1$ and $0$ as $t\to\wo$, so that the torsion angle of $\gamma$ has a unique continuous lift $\tilde\varphi\colon I\to\R$ with $\tilde\varphi(\wo)=0$. By~\eqref{eq:angles}, \cref{thm:A} and \cref{prop:fixedpoint}, the turning angle of $\gamma$, extended by $0$ at $\wo$, is continuous, and together with $\tilde\varphi$ it satisfies the hypotheses of \cref{thm:Bangle}; by uniqueness, $\gamma$ is the curve that \cref{thm:Bangle} gives for $p=\gamma(\wo)$ and $F_0=F(\wo)$.

\begin{theorem}\label{thm:planar}
Let $\gamma$ be Frenet-regular. The following are equivalent:
\begin{enumerate}[label=\textup{(\roman*)}]
\item $\gamma(I)$ is contained in a plane;
\item $B$ is constant on $I$;
\item $\varphi\equiv0$ on $\Is$;
\item $m_{23}=DN\cdot B\equiv0$ on $\Is$.
\end{enumerate}
If, in addition, $1+\delta\beta\neq0$ on $\Is$ (equivalently, $\theta(t)\neq\pm\pi/2$), then \textup{(i)--(iv)} are also equivalent to
\begin{enumerate}[label=\textup{(\roman*)},start=5]
\item $\tau\equiv0$ on $\Is$.
\end{enumerate}
\end{theorem}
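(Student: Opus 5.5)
The plan is to prove the cycle (ii)$\Rightarrow$(iii)$\Leftrightarrow$(iv), then (ii)$\Rightarrow$(i)$\Rightarrow$(ii), and (iii)$\Rightarrow$(ii), all from the rotation formulas~\eqref{eq:rotation} of \cref{thm:A}. At the end we add (v) under the extra hypothesis.

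For (iii)$\Leftrightarrow$(iv), \cref{cor:relations}(b) gives $m_{23}=\sin\varphi/\delta$, so (iv) is equivalent to $\sin\varphi\equiv0$, i.e.\ $\varphi(t)\in\{0,\pi\}$ on $\Is$. To exclude $\varphi=\pi$ we use continuity. The function $\cos\varphi(t)=B(t)\cdot B(\sigma(t))$ is continuous on $\Is$ and tends to $1$ as $t\to\wo$, since $B$ is continuous on $I$ by \cref{def:frame}. So if it takes only the values $\pm1$, it is $\equiv1$ on each component of $\Is$, which gives (iii). The converse is immediate.

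For (iii)$\Rightarrow$(ii), the third line of~\eqref{eq:rotation} with $\varphi=0$ gives $B(\sigma(t))=B(t)$ on $\Is$, and $B$ is continuous at $\wo$. Then \cref{lem:JN}(c) (the orbit-coupling mechanism of \cref{rem:orbits}) shows that $B$ is constant. Conversely, (ii)$\Rightarrow$(iii) holds because $B(\sigma(t))=B(t)$ forces $\cos\varphi=1$ by~\eqref{eq:angles}.

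For (ii)$\Rightarrow$(i), let $B\equiv B_0$. Then $D(\gamma\cdot B_0)=s\,T\cdot B_0=0$ on $I$. The function $\gamma\cdot B_0$ is continuous at $\wo$, so \cref{lem:JN}(c) makes it constant, and $\gamma(I)$ lies in a plane orthogonal to $B_0$. For (i)$\Rightarrow$(ii), let $\gamma(I)\subset\{x:x\cdot u=c\}$ with $\abs{u}=1$. Differencing gives $v\cdot u=0$ on $\Is$, and at $\wo$ we get $\gamma'(\wo)\cdot u=0$. Hence $T\cdot u=0$ on $I$, and likewise $DT\cdot u=0$ on $I$: on $\Is$ this is a difference of values of $T\cdot u$, and at $\wo$ a derivative. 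Since $T\times DT\neq0$ by \ref{F2}, $B$ is a unit vector orthogonal to the plane $u^\perp$, so $B=\pm u$. By continuity of $B$ on the interval $I$ the sign is constant, and $B$ is constant.

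Finally, (v): by~\eqref{eq:invariants}, $\delta\tau=\cos\theta\sin\varphi$ and $1+\delta\beta=\cos\theta$. Under the hypothesis $\cos\theta\neq0$, $\tau\equiv0$ on $\Is$ is therefore equivalent to $\sin\varphi\equiv0$, which is (iv). The only delicate step is excluding $\varphi=\pi$ and a sign flip of $B$, and both rely on the continuity of $B$ on all of $I$ guaranteed by Frenet-regularity. The rest is a direct reading of \cref{thm:A}.
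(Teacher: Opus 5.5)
Your proposal is correct and follows essentially the paper's proof. It uses the same ingredients: $m_{23}=\sin\varphi/\delta$, the continuity of $\cos\varphi=B\cdot(B\circ\sigma)$ with limit $1$ at $\wo$ to exclude $\varphi=\pi$, \cref{lem:JN}(c) for passing from $B\circ\sigma=B$ to constant $B$ and for (ii)$\Rightarrow$(i), the $B=\pm n$ plus continuity argument for (i)$\Rightarrow$(ii), and $\delta\tau=\cos\theta\sin\varphi$ for (v); the only difference is that you split the paper's single step (iv)$\Rightarrow$(ii) into (iv)$\Rightarrow$(iii)$\Rightarrow$(ii).
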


\begin{proof}
(i)$\Rightarrow$(ii) Let $\gamma(I)\subset\{x:x\cdot n=c_0\}$ with $\abs{n}=1$. Then $(\gamma\cdot n)(\sigma(t))=(\gamma\cdot n)(t)$, so $T(t)\cdot n=0$ for $t\in\Is$, and $T(\wo)\cdot n=\gamma'(\wo)\cdot n/s(\wo)=0$; differentiating $T\cdot n\equiv0$ at $\wo$ gives $T'(\wo)\cdot n=0$. Hence $T\times DT$ is parallel to $n$ on $I$, i.e.\ $B(t)\in\{n,-n\}$. As $B$ is continuous on the interval $I$, it is constant.
(ii)$\Rightarrow$(iii) $\cos\varphi=B(t)\cdot B(\sigma(t))=1$.
(iii)$\Rightarrow$(iv) By~\eqref{eq:M}, $m_{23}=\sin\varphi/\delta$.
(iv)$\Rightarrow$(ii) If $\sin\varphi\equiv0$, then $\cos\varphi(t)=B(t)\cdot B(\sigma(t))\in\{-1,1\}$ is continuous on each component of $\Is$ and tends to $1$ as $t\to\wo$, so $\varphi\equiv0$. By~\eqref{eq:rotation}, $B(\sigma(t))=B(t)$ on $\Is$, and \cref{lem:JN}(c) shows that $B$ is constant.
(ii)$\Rightarrow$(i) If $B\equiv n$, then $T\perp n$, so $(\gamma\cdot n)(\sigma(t))-(\gamma\cdot n)(t)=\delta(t)s(t)\,T(t)\cdot n=0$ on $\Is$, and $\gamma\cdot n$ is constant by \cref{lem:JN}(c).
Finally, $\delta\tau=\cos\theta\sin\varphi$ with $\cos\theta=1+\delta\beta$ by~\eqref{eq:invariants}; if $1+\delta\beta\neq0$, then $\tau\equiv0$ if and only if $\sin\varphi\equiv0$, i.e.\ (v)$\Leftrightarrow$(iv).
\end{proof}

\begin{remark}\label{rem:necessity}
Let $q=\frac12$, $\omega=1$ (so $\wo=2$ and $\delta(t)=1-t/2$), $I=[-3,2]$,
\begin{gather*}
  \theta(t):=\min\{\pi/2,\ \pi\delta(t)/2\},\qquad \varphi(t):=\tfrac12\,\chi(t+\tfrac32),\\
  \chi(x):=\begin{cases}e^{-1/(1-x^2)},&\abs{x}<1,\\0,&\abs{x}\ge1.\end{cases}
\end{gather*}
These data satisfy the hypotheses of \cref{thm:Bangle} with $\kappa_0=\pi/2$ and $\tau_0=0$. For the resulting curve, $\cos\theta=0$ wherever $\varphi\neq0$, because $\varphi$ is supported in $(-\frac52,-\frac12)$, where $\theta=\pi/2$. Hence $\tau=\cos\theta\sin\varphi/\delta\equiv0$ by~\eqref{eq:invariants}, although $\varphi\not\equiv0$, so the curve is not planar by \cref{thm:planar}. In fact, $B$ can be computed explicitly. Since $\sigma(I)=[-\frac12,2]$ and $\varphi$ vanishes outside $(-\frac52,-\frac12)$, we have $\varphi(\sigma^k(t))=0$ for $t\in I$ and $k\ge1$, so that $B(\sigma(t))=B(\wo)$ for $t\in I$ by~\eqref{eq:rotation} and the continuity of $B$ at $\wo$. As $F(t)=R(\theta(t),\varphi(t))^{\top}F(\sigma(t))$, it follows that $B(t)=\cos\varphi(t)\,B(\wo)+\sin\varphi(t)\,N(\sigma(t))$, hence $\abs{B(t)-B(\wo)}=2\abs{\sin(\varphi(t)/2)}$ for all $t\in I$; the largest value is $2\sin(1/(4e))\approx0.1837$, attained at $t=-\frac32$. Moreover, the planar curve with the same turning angle $\theta$ and with $\varphi\equiv0$ has the same Hahn curvature $\kappa=\sin\theta/\delta$ and the same torsion $\tau\equiv0$, but it is not congruent to the curve above, since its binormal is constant. Since $\delta^2\kappa^2=1$ wherever $\theta=\pi/2$, this shows that condition~\eqref{eq:bound} cannot be dropped from the uniqueness part of \cref{thm:B}.
\end{remark}

Finally, the curves of \cref{thm:B} converge to classical curves as $q\to1$.

\begin{proposition}\label{prop:qto1}
Let $J\subset\R$ be an open interval, $\wo\in J$, let $\kappa\colon J\to(0,\infty)$ and $\tau\colon J\to\R$ be continuous, let $p\in\R^3$ and $F_0\in\SO$, and let $\gamma_{\cl}\colon J\to\R^3$ be the curve parametrized by arc length with curvature $\kappa$, torsion $\tau$, $\gamma_{\cl}(\wo)=p$ and Frenet frame $F_{\cl}(\wo)=F_0$. Let $K\subset J$ be a compact interval with $\wo\in K$. For $q\in(0,1)$ put $\omega:=(1-q)\wo$; if $q$ is so close to $1$ that~\eqref{eq:bound} holds on $K$, let $\gamma_q\colon K\to\R^3$ be the curve given by \cref{thm:B} on $I=K$ with $\gamma_q(\wo)=p$ and $F_q(\wo)=F_0$. Then $\gamma_q\to\gamma_{\cl}$ and $F_q\to F_{\cl}$ uniformly on $K$ as $q\to1$. If $\kappa$ and $\tau$ are Lipschitz continuous on $K$, then $\abs{\gamma_q-\gamma_{\cl}}+\norm{F_q-F_{\cl}}=O(1-q)$ uniformly on $K$.
\end{proposition}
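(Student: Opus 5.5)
The plan is to compare the Hahn frame $F_q$ with the classical frame $F_{\cl}$ along each orbit, using the step relation $F_q(\sigma(t))=R_q(t)F_q(t)$ from~\eqref{eq:Fstep}. The classical frame satisfies $F_{\cl}'=M_{\cl}F_{\cl}$, with $M_{\cl}(t)$ the classical Frenet matrix built from $\kappa(t),\tau(t)$. Hence
\[
F_{\cl}(\sigma(t))=P_q(t)F_{\cl}(t),\qquad P_q(t)\in\SO,
\]
where $P_q(t)$ is the propagator of $Y'=M_{\cl}Y$ from $t$ to $\sigma(t)$.

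First, uniform constants. Since $K$ is compact, there is $C_1$ with $\kappa,\abs{\tau}\le C_1$ on $K$. Since $\abs{\delta}\le(1-q)\operatorname{diam}K$, the estimates of Step 2 in the proof of \cref{thm:B} hold on all of $K$ once $q$ is close to $1$, with a constant $C_0$ independent of $q$. Expanding~\eqref{eq:M} as in Step 2 gives
\[
R_q(t)=I+\delta(t)M_{\cl}(t)+O(\delta^2),
\]
with $O$ uniform in $q$ and $t\in K$. On the other hand,
\[
P_q(t)=I+\int_t^{\sigma(t)}M_{\cl}\,dx+O(\delta^2)=I+\delta(t)M_{\cl}(t)+\delta(t)\,\epsilon(\abs{\delta(t)}),
\]
where $\epsilon$ is the modulus of continuity of $M_{\cl}$ on $K$. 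It follows that
\[
\norm{R_q(t)-P_q(t)}\le\abs{\delta(t)}\bigl(\epsilon(\abs{\delta(t)})+C\abs{\delta(t)}\bigr).
\]

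Next, telescope along the orbit. Put $E(t):=F_q(t)-F_{\cl}(t)$ and note $E(\wo)=0$. Then
\[
E(\sigma(t))=R_q(t)E(t)+\bigl(R_q(t)-P_q(t)\bigr)F_{\cl}(t).
\]
Since $R_q$ is orthogonal, $\norm{E(t)}\le\norm{E(\sigma(t))}+\norm{R_q(t)-P_q(t)}$. Iterating and using $E(\sigma^n(t))\to0$, which follows from continuity of both frames at $\wo$, we obtain
\[
\norm{E(t)}\le\sum_{k\ge0}\abs{\delta(\sigma^k t)}\bigl(\epsilon((1-q)\operatorname{diam}K)+C(1-q)\operatorname{diam}K\bigr).
\]
By \cref{lem:sigma}(c) the sum of $\abs{\delta(\sigma^k t)}$ equals $\abs{t-\wo}$, so $\norm{E(t)}\le\abs{t-\wo}\bigl(\epsilon(c(1-q))+C(1-q)\bigr)\to0$ uniformly on $K$. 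In the Lipschitz case $\epsilon(r)\le Lr$, and the bound becomes $O(1-q)$.

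Finally, the curves. By Step 5, $\gamma_q(t)=p-\sum_k\delta(\sigma^kt)\,T_q(\sigma^kt)$, a Riemann-type sum over the partition $\{\sigma^k t\}$ of the segment between $t$ and $\wo$, while $\gamma_{\cl}(t)=p+\int_{\wo}^tT_{\cl}$. Split
\[
\gamma_q(t)-\gamma_{\cl}(t)=-\sum_k\delta(\sigma^kt)\bigl(T_q-T_{\cl}\bigr)(\sigma^kt)+\Bigl[-\sum_k\delta(\sigma^kt)\,T_{\cl}(\sigma^kt)-\int_{\wo}^tT_{\cl}\Bigr].
\]
The first term is bounded by $\abs{t-\wo}\sup_K\norm{E}$. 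The second is the error of a Riemann sum with mesh at most $(1-q)\operatorname{diam}K$ for the $C^1$ function $T_{\cl}$ (with $\abs{T_{\cl}'}=\kappa\le C_1$), so it is bounded by $C_1\abs{t-\wo}(1-q)\operatorname{diam}K$. Both bounds are uniform on $K$ and $O(1-q)$ in the Lipschitz case.

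The main obstacle is obtaining expansions of $R_q$ and $P_q$ that hold uniformly in both $q$ and $t$, rather than only near $\wo$ as in Step 2. I would handle this by bounding $\abs{\theta},\abs{\varphi}\le C_0\abs{\delta}$ on all of $K$ via the $\arcsin$ estimate, and by a Gronwall bound for the propagator $P_q$.
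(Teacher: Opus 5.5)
Your proof is correct and follows the paper's argument. The paper compares the step rotation $R(t)$ with the classical propagator $\Phi(\sigma(t),t)=F_{\cl}(\sigma(t))F_{\cl}(t)^{\top}$ to get $\norm{R(t)-\Phi(\sigma(t),t)}\le\abs{\delta(t)}\,\epsilon(\abs{\delta(t)})$, and sums these errors along the orbit using $\norm{A_{n-1}\cdots A_0-B_{n-1}\cdots B_0}\le\sum_k\norm{A_k-B_k}$ for orthogonal matrices, which is exactly your telescoping of $E$. It then bounds the curve error by the same split into a frame term and a Riemann-sum term; the only cosmetic difference is that no Gronwall bound is needed, since $\Phi(x,t)\in\SO$ gives $\norm{\Phi(x,t)-I}\le m\abs{x-t}$ directly.
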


\begin{proof}
Let $A(t)$ be the skew-symmetric matrix $M_0$ of \cref{thm:B}(a) with $\kappa(\wo)$ and $\tau(\wo)$ replaced by $\kappa(t)$ and $\tau(t)$, so that $F_{\cl}'=AF_{\cl}$, and put $\Phi(x,t):=F_{\cl}(x)F_{\cl}(t)^{\top}\in\SO$. Let $m:=\max_K(\kappa+\abs{\tau})$, so that $\norm{A}\le m$ on $K$, let $\varpi$ be the modulus of continuity of $A$ on $K$, and put $\ell_K:=\max_{t\in K}\abs{t-\wo}$. Since $\partial_x\Phi(x,t)=A(x)\Phi(x,t)$, we have $\norm{\Phi(x,t)-I}\le m\abs{x-t}$ and
\[
  \bigl\lVert\Phi(t+h,t)-I-hA(t)\bigr\rVert=\Bigl\lVert\int_t^{t+h}\bigl(A(x)\Phi(x,t)-A(t)\bigr)\,dx\Bigr\rVert\le\abs{h}\bigl(\varpi(\abs{h})+m^2\abs{h}\bigr)
\]
for $t,t+h\in K$. On the other hand, let $(1-q)m\ell_K\le\frac12$, so that $m\abs{\delta}\le\frac12$ on $K$. Then the estimates of Step 2 of the proof of \cref{thm:B} hold on all of $K$ with $C_1:=m$, and together with~\eqref{eq:M}, $1-\cos\varphi=O(\delta^2)$ and $1/\cos\theta-1=O(\delta^2)$ they show that the step rotation $R(t)$ satisfies $\norm{(R(t)-I)/\delta(t)-A(t)}\le C\abs{\delta(t)}$ on $K^*:=K\setminus\{\wo\}$, where $C$ depends only on $m$. As $\sigma(t)=t+\delta(t)$, it follows that $\norm{R(t)-\Phi(\sigma(t),t)}\le\abs{\delta(t)}\,\epsilon(\abs{\delta(t)})$ with $\epsilon(h):=\varpi(h)+(m^2+C)h$. For orthogonal matrices $A_k$ and $B_k$ we have $\norm{A_{n-1}\cdots A_0-B_{n-1}\cdots B_0}\le\sum_k\norm{A_k-B_k}$. With $t_k:=\sigma^k(t)$, the product $\Pi(t)$ of Step 3 of the proof of \cref{thm:B} and $\Phi(\wo,t)=\lim_{n\to\infty}\Phi(t_n,t_{n-1})\cdots\Phi(t_1,t_0)$, \cref{lem:sigma}(c) therefore gives
\[
  \norm{\Pi(t)-\Phi(\wo,t)}\le\sum_{k\ge0}\abs{\delta(t_k)}\,\epsilon(\abs{\delta(t_k)})\le\epsilon\bigl((1-q)\ell_K\bigr)\,\ell_K\qquad(t\in K),
\]
which bounds $\norm{F_q(t)-F_{\cl}(t)}$, since $F_q(t)=\Pi(t)^{\top}F_0$ and $F_{\cl}(t)=\Phi(\wo,t)^{\top}F_0$. Finally, let $T_q$ and $T_{\cl}$ be the first rows of $F_q$ and $F_{\cl}$. By~\eqref{eq:JN},
\[
  \gamma_q(t)-\gamma_{\cl}(t)=-\sum_{k\ge0}\Bigl[\delta(t_k)\bigl(T_q(t_k)-T_{\cl}(t_k)\bigr)+\int_{t_k}^{t_{k+1}}\bigl(T_{\cl}(t_k)-T_{\cl}(x)\bigr)\,dx\Bigr],
\]
and $\abs{T_{\cl}'}=\kappa\le m$, so that $\abs{\gamma_q(t)-\gamma_{\cl}(t)}\le\ell_K\max_K\norm{F_q-F_{\cl}}+\frac m2(1-q)\ell_K^2$. If $\kappa$ and $\tau$ are Lipschitz continuous, then $\varpi(h)=O(h)$, and both errors are $O(1-q)$.
\end{proof}

\section{The classical limit and the fixed-point limit}\label{sec:limits}

\emph{Setting.} Let $J\subset\R$ be an open interval and $\gamma\in C^4(J;\R^3)$ a regular curve with $\gamma'\times\gamma''\neq0$ on $J$. We denote by $s_{\cl}=\abs{\gamma'}$ its classical speed, by $\kappa_{\cl}$ and $\tau_{\cl}$ its classical (arc-length) curvature and torsion, and by $F_{\cl}=(T_{\cl},N_{\cl},B_{\cl})$ its classical Frenet frame, so that $T_{\cl}'=s_{\cl}\kappa_{\cl}N_{\cl}$ and $B_{\cl}'=-s_{\cl}\tau_{\cl}N_{\cl}$ (in~\cite{doCarmo1976}, the torsion has the opposite sign). We fix $\wo\in J$ and, for each $q\in(0,1)$, take $\omega:=(1-q)\wo$, so that the fixed point is $\wo$ for every $q$. Let $K\subset J$ be a compact interval with $\wo\in K$ and $K^*:=K\setminus\{\wo\}$. For each $q$, $\gamma|_K$ is a Hahn curve; its Hahn quantities are denoted by $s_q$, $T_q$, $N_q$, $B_q$, $F_q$, $\kappa_q$, $\tau_q$, $\beta_q$, and its turning and torsion angles by $\theta$ and $\varphi$. For fixed $q$ the curve need not be Frenet-regular on all of $K$, but the formulas of \cref{def:frame} define these quantities at every $t\in K^*$ at which both pairs $T_q(t),T_q(\sigma(t))$ and $T_q(\sigma(t)),T_q(\sigma^2(t))$ are linearly independent, and \cref{thm:A} holds at such $t$, since its proof is pointwise. Throughout this section, $O(\abs{\delta}^k)$ denotes a quantity bounded by $C\abs{\delta(t)}^k$, where $C$ depends only on $\gamma$ and $K$ (and not on $q$ or $t$).

For $g\in C^4(K;\R^3)$ and $t_0,t_1,t_2\in K$ we use the divided differences $g[t_0,t_1]$ and $g[t_0,t_1,t_2]$, extended by continuity to coinciding arguments.

\begin{lemma}\label{lem:centroid}
Let $g\in C^4(K;\R^3)$, $t_0,t_1,t_2\in K$, $h:=\max_{i,j}\abs{t_i-t_j}$, $m:=(t_0+t_1)/2$ and $c:=(t_0+t_1+t_2)/3$. Then
\[
  \abs{g[t_0,t_1]-g'(m)}\le\tfrac{h^2}{24}\norm{g'''}_\infty,\qquad
  \bigl\lvert g[t_0,t_1,t_2]-\tfrac12g''(c)\bigr\rvert\le\tfrac{h^2}{4}\norm{g''''}_\infty,
\]
the supremum norms being taken over $K$. The first estimate only requires $g\in C^3(K;\R^3)$.
\end{lemma}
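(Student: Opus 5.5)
We use the Hermite–Genocchi (B-spline) integral representations of divided differences and then Taylor-expand about the centroid. For the first estimate,
\[
  g[t_0,t_1]=\int_0^1 g'\bigl(t_0+u(t_1-t_0)\bigr)\,du .
\]
Put $x(u):=t_0+u(t_1-t_0)=m+(u-\tfrac12)(t_1-t_0)$ and expand $g'$ about $m$ to second order with integral remainder (componentwise, which suffices since the bounds are stated with the Euclidean norm and we can apply the scalar remainder to $e\cdot g$ for a unit vector $e$). The zeroth-order term gives $g'(m)$. The first-order term is $g''(m)(t_1-t_0)\int_0^1(u-\tfrac12)\,du=0$. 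The remainder is bounded by $\tfrac12\norm{g'''}_\infty (t_1-t_0)^2\int_0^1(u-\tfrac12)^2du=\tfrac{1}{24}\norm{g'''}_\infty(t_1-t_0)^2\le\tfrac{h^2}{24}\norm{g'''}_\infty$. This uses only $g\in C^3$, and coinciding arguments are covered because the formula is continuous in $(t_0,t_1)$.

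For the second estimate we use
\[
  g[t_0,t_1,t_2]=\int_{S}g''(u_0t_0+u_1t_1+u_2t_2)\,dA(u),
\]
where $S=\{u_i\ge0,\ \sum u_i=1\}$ is parametrized by $(u_1,u_2)$ with $dA=du_1\,du_2$, so that $S$ has area $\tfrac12$. The point $x(u)=\sum u_it_i$ satisfies $x(u)-c=\sum(u_i-\tfrac13)t_i$. Since $\sum_i(u_i-\tfrac13)=0$, we may subtract any $t_j$ and obtain $\abs{x(u)-c}\le h\sum_i\abs{u_i-\tfrac13}/2$, and in any case $\abs{x(u)-c}\le h$. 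Now expand $g''$ about $c$ to first order with remainder. The constant term contributes $\tfrac12 g''(c)$. The linear term is $g'''(c)\int_S(x(u)-c)\,dA=0$, because the barycentre of $S$ is $u_i=\tfrac13$, so $\int_S u_i\,dA=\tfrac16$ for each $i$. The remainder is bounded by $\tfrac12\norm{g''''}_\infty\int_S\abs{x(u)-c}^2dA\le\tfrac12\norm{g''''}_\infty h^2\cdot\tfrac12=\tfrac{h^2}{4}\norm{g''''}_\infty$, using the crude bound $\abs{x-c}\le h$ together with the area $\tfrac12$. Again, continuity in the nodes covers coinciding arguments, and all points $x(u)$ lie in $K$ by convexity.

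The only step needing care is the vanishing of the first-order terms, i.e.\ that the centroid of the nodes is exactly the mean of $x(u)$ under the normalized Hermite–Genocchi measure. This is what singles out $m$ and $c$. The constants stated are generous, so no sharp moment computation is needed in the second bound.
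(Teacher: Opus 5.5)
Your proposal is correct and follows the paper's own proof essentially step by step. Both use the Hermite--Genocchi representations, Taylor-expand $g'$ about $m$ and $g''$ about $c$ with integral remainder, and kill the first-order terms because each barycentric coordinate has mean $\frac13$ on $S$. Both then bound the remainders by $\frac12\norm{g'''}_\infty h^2\int_0^1(u-\frac12)^2\,du$ and by $\frac12\norm{g''''}_\infty h^2\cdot\frac12$, the latter using $|x(u)-c|\le h$ and the mass $\frac12$ of $S$.
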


\begin{proof}
By the Hermite--Genocchi formula, $g[t_0,t_1]=\int_0^1g'(t_0+u(t_1-t_0))\,du$ and $g[t_0,t_1,t_2]=\int_Sg''(\lambda_0t_0+\lambda_1t_1+\lambda_2t_2)\,d\lambda$, where $S=\{\lambda\in[0,1]^3:\lambda_0+\lambda_1+\lambda_2=1\}$ carries the measure $d\lambda_1\,d\lambda_2$, of total mass $\frac12$. Expand $g'$ about $m$ and $g''$ about $c$ by Taylor's formula with integral remainder. The first-order terms vanish, because $\int_0^1(u-\frac12)\,du=0$ and $\int_S(\sum_i\lambda_it_i-c)\,d\lambda=0$ (each $\lambda_i$ has mean $\frac13$ on $S$). The remainders are bounded by $\frac12\norm{g'''}_\infty h^2\int_0^1(u-\frac12)^2\,du=\frac{h^2}{24}\norm{g'''}_\infty$ and by $\frac12\norm{g''''}_\infty h^2\cdot\frac12$, respectively.
\end{proof}

\begin{proposition}\label{prop:frames}
In the Setting, there are $C,\eta>0$, depending only on $\gamma$ and $K$, such that for all $q\in(0,1)$ and all $t\in K^*$ with $\abs{\delta(t)}\le\eta$ the following holds. The vectors $T_q(t)$ and $T_q(\sigma(t))$ are linearly independent, and, with $m(t):=\frac12(t+\sigma(t))$ and $c(t):=\frac13(t+\sigma(t)+\sigma^2(t))$,
\begin{gather*}
  \abs{s_q(t)-s_{\cl}(m(t))}\le C\delta(t)^2,\qquad \abs{T_q(t)-T_{\cl}(m(t))}\le C\delta(t)^2,\\
  \abs{B_q(t)-B_{\cl}(c(t))}\le C\delta(t)^2.
\end{gather*}
In particular, $\gamma$ is Frenet-regular on the $\sigma$-invariant interval $K_\eta:=\{t\in K:\abs{\delta(t)}\le\eta\}\ni\wo$.
\end{proposition}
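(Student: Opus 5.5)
The plan is to express every Hahn quantity at $t\in K^*$ through divided differences of $\gamma$ on the three orbit points $t_0:=t$, $t_1:=\sigma(t)$, $t_2:=\sigma^2(t)$, and then to apply \cref{lem:centroid}. By \cref{lem:sigma}, $t_1-t_0=\delta(t)$ and $t_2-t_1=q\delta(t)$, so $h:=\max_{i,j}\abs{t_i-t_j}\le(1+q)\abs{\delta(t)}\le2\abs{\delta(t)}$, uniformly in $q$. All points lie in $K$ because $K$ is $\sigma$-invariant.

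\emph{Speed and tangent.} Since $D\gamma(t)=\gamma[t_0,t_1]$, \cref{lem:centroid} gives $\abs{v_q(t)-\gamma'(m(t))}\le\frac{h^2}{24}\norm{\gamma'''}_\infty\le C\delta^2$. Because $s_{\cl}\ge s_{\min}>0$ on $K$, the maps $x\mapsto\abs{x}$ and $x\mapsto x/\abs{x}$ are Lipschitz on $\{\abs{x}\ge s_{\min}/2\}$. Hence, for $\abs{\delta}\le\eta$ with $\eta$ small, we get $\abs{s_q-s_{\cl}(m)}\le C\delta^2$ and $\abs{T_q-T_{\cl}(m)}\le C\delta^2$.

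\emph{Binormal.} I would avoid normalizing $DT_q$ directly. For $t\neq\wo$, $B_q(t)$ is the unit vector along $\sgn(\delta)\,T_q(t)\times T_q(\sigma(t))$. This vector is a positive multiple of $\sgn(\delta)\,\gamma[t_0,t_1]\times\gamma[t_1,t_2]$. Using $\gamma[t_1,t_2]=\gamma[t_0,t_1]+(t_2-t_0)\gamma[t_0,t_1,t_2]$, the cross product becomes
\[
  \sgn(\delta)(t_2-t_0)\,\gamma[t_0,t_1]\times\gamma[t_0,t_1,t_2].
\]
Since $t_2-t_0=(1+q)\delta$ has the sign of $\delta$, $B_q(t)$ is the unit vector along $W:=\gamma[t_0,t_1]\times\gamma[t_0,t_1,t_2]$, with no $\delta$ left in the denominator. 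This also shows that $T_q(t)$ and $T_q(\sigma(t))$ are linearly independent as soon as $W\neq0$.

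Now compare $W$ with $\tfrac12\gamma'(c)\times\gamma''(c)$, whose direction is $B_{\cl}(c)$. By \cref{lem:centroid}, $\gamma[t_0,t_1,t_2]=\frac12\gamma''(c)+O(\delta^2)$. For the first factor, $\gamma[t_0,t_1]=\gamma'(m)+O(\delta^2)$, but $m\neq c$; indeed $m-c=O(\delta)$. This is the main obstacle, because only an $O(\delta)$ error seems available there. The fix is that the first-order discrepancy is parallel to $\gamma''$:
\[
  \gamma'(m)=\gamma'(c)+(m-c)\gamma''(c)+O(\delta^2).
\]
The middle term is annihilated in the cross product with $\tfrac12\gamma''(c)$. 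Therefore $W=\frac12\gamma'(c)\times\gamma''(c)+O(\delta^2)$, with constants depending only on $\norm{\gamma^{(j)}}_\infty$, $j\le4$.

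Since $\abs{\gamma'\times\gamma''}\ge w_{\min}>0$ on $K$, the Lipschitz continuity of normalization gives $\abs{B_q(t)-B_{\cl}(c(t))}\le C\delta^2$. Shrinking $\eta$ so that $C\eta^2<w_{\min}/4$ guarantees $W\neq0$, and hence the claimed linear independence.

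\emph{Frenet-regularity on $K_\eta$.} The set $K_\eta$ is a $\sigma$-invariant interval containing $\wo$, because $\abs{\delta(\sigma(t))}=q\abs{\delta(t)}$. Condition \ref{F2} on $K_\eta\setminus\{\wo\}$ is the linear independence just proved. At $\wo$, note that $\gamma\in C^4$ gives $T_q(\wo)=T_{\cl}(\wo)$, and $T_q'(\wo)=T_{\cl}'(\wo)=s_{\cl}\kappa_{\cl}N_{\cl}(\wo)\neq0$. For \ref{F1}, $T_q(t)=D\gamma(t)/\abs{D\gamma(t)}$ is continuous on $K_\eta\setminus\{\wo\}$, because $\gamma$ is continuous and $s_q>0$ there. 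The estimate $\abs{T_q-T_{\cl}(m)}\le C\delta^2$ gives continuity at $\wo$, and since $T_q(\wo)=T_{\cl}(\wo)$ is defined via the classical derivative, $T_q$ is differentiable at $\wo$.
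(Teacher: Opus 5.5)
Your estimates for $s_q$, $T_q$ and $B_q$, and your proof that $T_q(t)$ and $T_q(\sigma(t))$ are linearly independent, follow the paper's proof essentially step for step. You use the identity $\gamma[t_1,t_2]=\gamma[t_0,t_1]+(t_2-t_0)\gamma[t_0,t_1,t_2]$, \cref{lem:centroid} with $h\le2\abs{\delta}$, and the cancellation of the $O(\delta)$ term $(m-c)\gamma''(c)$ in the cross product with $\gamma''(c)$. That part is correct.

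The gap is in your last paragraph, which contains a false claim and a non sequitur.

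First, the claim $T_q'(\wo)=T_{\cl}'(\wo)$ is false for every $q<1$. Take $\omega=0$ and $\gamma(t)=(t,t^2,0)$. Then $D\gamma(t)=(1,\qn{2}\,t,0)$, hence $T_q'(0)=(0,\qn{2},0)$, while $T_{\cl}'(0)=(0,2,0)$.

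Second, the equality $T_q(\wo)=T_{\cl}(\wo)$ at the single point $\wo$ does not imply that $T_q$ is differentiable there. Off $\wo$ the field $T_q$ is built from chords, not from $\gamma'$.

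What is needed is your own $O(\delta^2)$ bound, read as a statement about $t-\wo$. By \cref{lem:sigma}, $\delta(t)^2=(1-q)^2(t-\wo)^2$ and $m(t)=\wo+\frac{\qn{2}}{2}(t-\wo)$. So $T_q-T_{\cl}\circ m$ vanishes at $\wo$ and is $O((t-\wo)^2)$, hence is differentiable at $\wo$ with derivative $0$. Therefore $T_q$ is differentiable at $\wo$ with
\[
  T_q'(\wo)=\frac{\qn{2}}{2}\,T_{\cl}'(\wo)=\frac{\qn{2}}{2}\,(s_{\cl}\kappa_{\cl}N_{\cl})(\wo).
\]
This vector is non-zero and orthogonal to $T_q(\wo)=T_{\cl}(\wo)$, so \ref{F2} holds at $\wo$. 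This is exactly how the paper concludes.

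Your final conclusion of Frenet-regularity survives. But the value you wrote would give $\kappa_q(\wo)=s_{\cl}(\wo)\kappa_{\cl}(\wo)$, which contradicts \cref{cor:fixedpoint}. The factor $\qn{2}/2$ you dropped is precisely the fixed-point deformation this section is about.
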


\begin{proof}
We have $v_q(t)=\gamma[t,\sigma(t)]$, so \cref{lem:centroid} with $h=\abs{\delta(t)}$ gives $\abs{v_q(t)-\gamma'(m(t))}\le\frac1{24}\norm{\gamma'''}_\infty\delta^2$. The estimates for $s_q=\abs{v_q}$ and $T_q=v_q/\abs{v_q}$ follow, because $\abs{\gamma'}$ is bounded below on $K$. For the binormal put $t_i:=\sigma^i(t)$, $i=0,1,2$; then $t_2-t_0=(1+q)\delta(t)$, and since $\gamma[t_1,t_2]=\gamma[t_0,t_1]+(t_2-t_0)\gamma[t_0,t_1,t_2]$,
\begin{equation}\label{eq:crossdd}
  v_q(t)\times v_q(\sigma(t))=\gamma[t_0,t_1]\times\gamma[t_1,t_2]=(t_2-t_0)\,\gamma[t_0,t_1]\times\gamma[t_0,t_1,t_2].
\end{equation}
By \cref{lem:centroid} (with $h=(1+q)\abs{\delta}\le2\abs{\delta}$), $\gamma[t_0,t_1]=\gamma'(m)+O(\delta^2)$ and $\gamma[t_0,t_1,t_2]=\frac12\gamma''(c)+O(\delta^2)$. Moreover $\gamma'(m)=\gamma'(c)+(m-c)\gamma''(c)+O(\delta^2)$, and the term $(m-c)\gamma''(c)$ disappears in the cross product with $\gamma''(c)$. Therefore
\begin{equation}\label{eq:cross2}
  \gamma[t_0,t_1]\times\gamma[t_0,t_1,t_2]=\tfrac12\,\gamma'(c)\times\gamma''(c)+O(\delta^2).
\end{equation}
Since $\abs{\gamma'\times\gamma''}$ is bounded below on $K$, for $\abs{\delta}\le\eta$ with $\eta$ small the right-hand side of~\eqref{eq:cross2} does not vanish; by~\eqref{eq:crossdd}, $T_q(t)$ and $T_q(\sigma(t))$ are then linearly independent. Finally, as in the proof of \cref{thm:A}, $B_q(t)=\sgn(\delta)\,v_q(t)\times v_q(\sigma(t))/\abs{v_q(t)\times v_q(\sigma(t))}$, and $\sgn(t_2-t_0)=\sgn\delta(t)$; so~\eqref{eq:crossdd}--\eqref{eq:cross2} give $B_q(t)=B_{\cl}(c(t))+O(\delta^2)$. Moreover, on $K_\eta$ the field $T_q$ is continuous, and the estimate $T_q(t)=T_{\cl}(m(t))+O((t-\wo)^2)$ shows that it is differentiable at $\wo$, with $T_q'(\wo)=\frac{\qn{2}}{2}T_{\cl}'(\wo)\neq0$ (see the proof of \cref{cor:fixedpoint}); hence $\gamma$ is Frenet-regular on $K_\eta$.
\end{proof}

By \cref{lem:sigma}, for every $t$,
\begin{equation}\label{eq:centroidsteps}
  m(\sigma(t))-m(t)=\frac{\qn{2}}{2}\,\delta(t),\qquad c(\sigma(t))-c(t)=\frac{\qn{3}}{3}\,\delta(t).
\end{equation}

\begin{theorem}\label{thm:C}
In the Setting, there are $C,\eta>0$, depending only on $\gamma$ and $K$, such that for all $q\in(0,1)$ and all $t\in K^*$ with $\abs{\delta(t)}\le\eta$
\begin{gather*}
  \abs{s_q-s_{\cl}}\le C\abs{\delta},\qquad \abs{F_q-F_{\cl}}\le C\abs{\delta},\qquad
  \Bigl\lvert\kappa_q-\frac{\qn{2}}{2}\,s_{\cl}\kappa_{\cl}\Bigr\rvert\le C\abs{\delta},\\
  \Bigl\lvert\tau_q-\frac{\qn{3}}{3}\,s_{\cl}\tau_{\cl}\Bigr\rvert\le C\abs{\delta},\qquad
  \Bigl\lvert\beta_q+\frac{\qn{2}^2}{8}\,\delta\,s_{\cl}^2\kappa_{\cl}^2\Bigr\rvert\le C\delta^2,
\end{gather*}
all quantities being evaluated at $t$.
\end{theorem}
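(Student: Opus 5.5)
We build directly on \cref{prop:frames}, which already gives $s_q(t)=s_{\cl}(m(t))+O(\delta^2)$, $T_q(t)=T_{\cl}(m(t))+O(\delta^2)$ and $B_q(t)=B_{\cl}(c(t))+O(\delta^2)$ uniformly in $q$ for $\abs{\delta}\le\eta$.

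\emph{Zeroth order.} Since $\abs{m(t)-t}$ and $\abs{c(t)-t}$ are at most $\abs{\delta(t)}$, and $s_{\cl}$, $T_{\cl}$, $B_{\cl}$ are $C^1$ on $K$ (because $\gamma\in C^4$ with $\gamma'\times\gamma''\neq0$), the mean value theorem gives $s_q=s_{\cl}+O(\abs{\delta})$, $T_q=T_{\cl}+O(\abs{\delta})$ and $B_q=B_{\cl}+O(\abs{\delta})$. Then $N_q=B_q\times T_q=N_{\cl}+O(\abs{\delta})$, and so $\abs{F_q-F_{\cl}}\le C\abs{\delta}$.

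\emph{Curvature and $\beta$.} By \cref{thm:A} (which holds pointwise) we have $\delta\kappa_q=\sin\theta$ with $\cos\theta=T_q(t)\cdot T_q(\sigma(t))$. Here $T_q(\sigma(t))-T_q(t)=T_{\cl}(m(\sigma(t)))-T_{\cl}(m(t))+O(\delta^2)$. By \eqref{eq:centroidsteps} and Taylor expansion this equals $\frac{\qn{2}}{2}\delta\,T_{\cl}'(m(t))+O(\delta^2)=\frac{\qn{2}}{2}\delta\,s_{\cl}\kappa_{\cl}N_{\cl}+O(\delta^2)$. The main point is that the $O(\delta^2)$ constants do not depend on $q$, since $\abs{\sigma(t)-t}=\abs{\delta}$ and $q^2\le1$. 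Dividing by $\delta$ gives $DT_q=\frac{\qn{2}}{2}s_{\cl}\kappa_{\cl}N_{\cl}+O(\abs{\delta})$. Dotting with $N_q=N_{\cl}+O(\abs{\delta})$ yields the $\kappa_q$ estimate.

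For $\beta_q$ we use $1+\delta\beta_q=\cos\theta=\sqrt{1-\delta^2\kappa_q^2}$, once $\eta$ is chosen so small that $\abs{\theta}<\pi/2$. Since $\cos\theta=T_q\cdot T_q(\sigma)\to1$ uniformly, we get $\delta\beta_q=-\frac12\delta^2\kappa_q^2+O(\delta^4)$. Together with the curvature estimate, $\kappa_q^2=\frac{\qn{2}^2}{4}s_{\cl}^2\kappa_{\cl}^2+O(\abs{\delta})$, this gives $\beta_q=-\frac{\qn{2}^2}{8}\delta s_{\cl}^2\kappa_{\cl}^2+O(\delta^2)$. Alternatively, the identity $\beta_q=DT_q\cdot T_q=-\frac{\delta}{2}\abs{DT_q}^2$, which follows from $\abs{T_q}=1$, gives the same result directly.

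\emph{Torsion (main obstacle).} By \cref{thm:A}, $\tau_q=-DB_q\cdot N_q$ with $DB_q=(B_q(\sigma(t))-B_q(t))/\delta$. This requires $B_q(\sigma(t))$, hence three consecutive steps starting at $\sigma(t)$, so $\eta$ must be chosen such that $\sigma(t)$ also satisfies the hypotheses; this is automatic, since $\abs{\delta(\sigma(t))}\le\abs{\delta(t)}$. Then
\[
B_q(\sigma(t))-B_q(t)=B_{\cl}(c(\sigma(t)))-B_{\cl}(c(t))+O(\delta^2)=\tfrac{\qn{3}}{3}\delta\,B_{\cl}'(c(t))+O(\delta^2),
\]
and $B_{\cl}'=-s_{\cl}\tau_{\cl}N_{\cl}$. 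Dividing by $\delta$ and dotting with $N_q=N_{\cl}(t)+O(\abs{\delta})$ gives $\tau_q=\frac{\qn{3}}{3}s_{\cl}\tau_{\cl}+O(\abs{\delta})$.

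The delicate point is that the $O(\delta^2)$ error in $B_q$ must be differenced across two base points $t$ and $\sigma(t)$ and then divided by $\delta$. This leaves only $O(\abs{\delta})$, which suffices here; a finer claim would need more regularity. We also have to make sure that all constants depend only on $\gamma$ and $K$. This holds because every step uses only sup norms of derivatives of $\gamma$ up to order four on $K$ and the lower bounds of $\abs{\gamma'}$ and $\abs{\gamma'\times\gamma''}$ on $K$.
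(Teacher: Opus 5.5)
Your proposal is correct and follows essentially the same route as the paper: the centroid estimates of \cref{prop:frames} at $t$ and $\sigma(t)$, the centroid steps~\eqref{eq:centroidsteps}, a Taylor expansion and division by $\delta$, with uniformity in $q$ coming from $\qn{k}/k\le1$ and $\abs{\delta(\sigma^k(t))}\le\abs{\delta(t)}$. The differences are only cosmetic. For the torsion you dot $DB_q$ with $N_q$ directly from the definition $\tau_q=-DB_q\cdot N_q$, whereas the paper extracts $\sin\varphi$ from $B_q(t)\times B_q(\sigma(t))=\sin\varphi\,T_q(\sigma(t))$ and then uses $\tau_q=\cos\theta\sin\varphi/\delta$. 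For $\beta_q$ you use $\cos\theta=\sqrt{1-\delta^2\kappa_q^2}$ (or $\beta_q=-\frac{\delta}{2}\abs{DT_q}^2$), whereas the paper uses $\beta_q=-\sin^2\theta/\bigl(\delta(1+\cos\theta)\bigr)$.
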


The constants do not depend on $q$; in particular, the estimates remain valid as $q\to0$, since $\sigma^k(t)$ always lies between $t$ and $\wo$.

\begin{proof}
The Hahn quantities at $t$ involve $\gamma$ at $t,\sigma(t),\sigma^2(t),\sigma^3(t)$, all of which lie between $t$ and $\wo$ and satisfy $\abs{\delta(\sigma^k(t))}\le\abs{\delta(t)}$; by \cref{prop:frames} they are well defined. The first two estimates follow from \cref{prop:frames}, since $m(t)-t$ and $c(t)-t$ are $O(\delta)$ and $N_q=B_q\times T_q$.

\emph{Curvature.} By~\eqref{eq:angles}, $\sin\theta=T_q(\sigma(t))\cdot N_q(t)=\bigl(T_q(\sigma(t))-T_q(t)\bigr)\cdot N_q(t)$. By \cref{prop:frames}, Taylor's formula and~\eqref{eq:centroidsteps},
\begin{align*}
  T_q(\sigma(t))-T_q(t)&=T_{\cl}(m(\sigma(t)))-T_{\cl}(m(t))+O(\delta^2)\\
  &=\frac{\qn{2}}{2}\,\delta\,(s_{\cl}\kappa_{\cl}N_{\cl})(m(t))+O(\delta^2),
\end{align*}
and $N_{\cl}(m(t))\cdot N_q(t)=1+O(\delta)$. Hence $\sin\theta=\frac{\qn{2}}{2}\delta\,s_{\cl}\kappa_{\cl}+O(\delta^2)$, with $s_{\cl}\kappa_{\cl}$ evaluated at $t$, and $\kappa_q=\sin\theta/\delta$ gives the claim.

\emph{Tangential component.} Shrinking $\eta$, we may assume $\cos\theta>0$. Then $\beta_q=(\cos\theta-1)/\delta=-\sin^2\theta/\bigl(\delta(1+\cos\theta)\bigr)=-\frac{\qn{2}^2}{8}\delta s_{\cl}^2\kappa_{\cl}^2+O(\delta^2)$.

\emph{Torsion.} From~\eqref{eq:rotation}, $B\times B(\sigma(t))=\sin\varphi\,T(\sigma(t))$ for every Frenet-regular curve; hence
\[
  \sin\varphi=\bigl(B_q(t)\times(B_q(\sigma(t))-B_q(t))\bigr)\cdot T_q(\sigma(t)).
\]
By \cref{prop:frames}, Taylor's formula and~\eqref{eq:centroidsteps}, $B_q(\sigma(t))-B_q(t)=-\frac{\qn{3}}{3}\delta\,(s_{\cl}\tau_{\cl}N_{\cl})(c(t))+O(\delta^2)$. Since $B_{\cl}\times N_{\cl}=-T_{\cl}$ and $T_q(\sigma(t))=T_{\cl}(c(t))+O(\delta)$, we obtain $\sin\varphi=\frac{\qn{3}}{3}\delta\,s_{\cl}\tau_{\cl}+O(\delta^2)$, and $\tau_q=\cos\theta\sin\varphi/\delta=\frac{\qn{3}}{3}s_{\cl}\tau_{\cl}+O(\delta)$ because $\cos\theta=1+O(\delta^2)$.
\end{proof}

\begin{corollary}\label{cor:classical}
In the Setting, let $q\to1$ with $\wo$ fixed (so that $\omega=(1-q)\wo\to0$). Then, uniformly on $K^*$,
\[
  s_q\to s_{\cl},\qquad F_q\to F_{\cl},\qquad \kappa_q\to s_{\cl}\kappa_{\cl},\qquad \tau_q\to s_{\cl}\tau_{\cl},\qquad \beta_q\to0,
\]
with errors $O(1-q)$. In particular $\kappa_q\to\kappa_{\cl}$ and $\tau_q\to\tau_{\cl}$ if $\gamma$ is parametrized by arc length.
\end{corollary}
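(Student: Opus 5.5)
The plan is to read off \cref{cor:classical} from \cref{thm:C}, with the relation between $\abs{\delta}$ and $1-q$ on $K$ as the only extra input. By \cref{lem:sigma}(b), $\delta(t)=(1-q)(\wo-t)$, so with $\ell_K:=\max_{t\in K}\abs{t-\wo}$ we have $\abs{\delta(t)}\le(1-q)\ell_K$ for all $t\in K$. Let $C,\eta$ be the constants of \cref{thm:C}, which depend only on $\gamma$ and $K$. As soon as $(1-q)\ell_K\le\eta$, every $t\in K^*$ satisfies $\abs{\delta(t)}\le\eta$. Hence all estimates of \cref{thm:C} hold simultaneously on all of $K^*$, with the same $C$ for every such $q$.

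Next I would substitute this bound into each estimate. This gives $\abs{s_q-s_{\cl}}\le C\ell_K(1-q)$ and $\abs{F_q-F_{\cl}}\le C\ell_K(1-q)$ directly. For the curvature, I would use the triangle inequality
\[
  \abs{\kappa_q-s_{\cl}\kappa_{\cl}}\le C\abs{\delta}+\Bigl\lvert\frac{\qn{2}}{2}-1\Bigr\rvert s_{\cl}\kappa_{\cl},
\]
where $\frac{\qn{2}}{2}-1=-\frac{1-q}{2}$. Since $s_{\cl}\kappa_{\cl}$ is continuous and hence bounded on the compact $K$, the right-hand side is $O(1-q)$. The torsion is handled the same way: $1-\frac{\qn{3}}{3}=\frac{(1-q)(2+q)}{3}\le 1-q$, and $s_{\cl}\tau_{\cl}$ is bounded on $K$. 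For $\beta_q$, the last estimate of \cref{thm:C} gives
\[
  \abs{\beta_q}\le\frac{\qn{2}^2}{8}\abs{\delta}\,s_{\cl}^2\kappa_{\cl}^2+C\delta^2\le\Bigl(\tfrac12\max_K s_{\cl}^2\kappa_{\cl}^2+C\ell_K\Bigr)\ell_K(1-q),
\]
using $\qn{2}\le2$ and $\abs{\delta}\le\ell_K$.

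Finally, if $\gamma$ is parametrized by arc length, then $s_{\cl}\equiv1$, and the last assertion follows. There is no real obstacle here. The only point that needs care is uniformity: the constants in \cref{thm:C} are independent of $q$, and the smallness condition $\abs{\delta}\le\eta$ is eventually satisfied on all of $K^*$ at once. Both hold precisely because $\delta$ scales linearly in $1-q$ with $\wo$ fixed.
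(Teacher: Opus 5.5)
Your proposal is correct and follows the paper's own proof: bound $\abs{\delta(t)}\le(1-q)\ell_K$ on $K$ so that $\abs{\delta}\le\eta$ holds on all of $K^*$ for $q$ near $1$, then apply \cref{thm:C} using $\abs{\qn{2}/2-1}=(1-q)/2$ and $\abs{\qn{3}/3-1}=(1-q)(2+q)/3\le1-q$. Your explicit handling of $\beta_q$ and of the boundedness of $s_{\cl}\kappa_{\cl}$ and $s_{\cl}\tau_{\cl}$ on the compact $K$ simply fills in steps the paper leaves implicit.
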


\begin{proof}
On $K$, $\abs{\delta(t)}=(1-q)\abs{\wo-t}\le(1-q)\operatorname{diam}K$, which is at most $\eta$ for $q$ close to $1$. Moreover $\abs{\qn{2}/2-1}=(1-q)/2$ and $\abs{\qn{3}/3-1}=(1-q)(2+q)/3\le1-q$. Now apply \cref{thm:C}.
\end{proof}

\begin{remark}
The limit has to be taken with control of the fixed point: along paths $(q,\omega)\to(1,0)$ the limit of the fixed point $\wo=\omega/(1-q)$ depends on the path and need not exist: $\omega=(1-q)r$ gives $\wo\equiv r$ for every constant $r$, and $\omega=(1-q)\sin\bigl(1/(1-q)\bigr)$ gives a fixed point that oscillates as $q\to1$. This is why $\wo$ is kept fixed in \cref{cor:classical} and in \cref{prop:qto1}, which gives the corresponding statement for the curves of \cref{thm:B}, constructed from prescribed curvature and torsion.
\end{remark}

\begin{corollary}\label{cor:fixedpoint}
In the Setting, fix $q\in(0,1)$. As $t\to\wo$,
\begin{gather*}
  \kappa_q(t)\to\frac{\qn{2}}{2}\,s_{\cl}(\wo)\kappa_{\cl}(\wo),\qquad \tau_q(t)\to\frac{\qn{3}}{3}\,s_{\cl}(\wo)\tau_{\cl}(\wo),\\
  \beta_q(t)\to0,\qquad F_q(t)\to F_{\cl}(\wo).
\end{gather*}
Moreover $F_q$ is differentiable at $\wo$, so that the Hahn invariants at $\wo$ are defined, and
\begin{gather*}
  D^2\gamma(\wo)=\frac{\qn{2}}{2}\,\gamma''(\wo),\\
  \kappa_q(\wo)=\frac{\qn{2}}{2}\,s_{\cl}(\wo)\kappa_{\cl}(\wo),\qquad \tau_q(\wo)=\frac{\qn{3}}{3}\,s_{\cl}(\wo)\tau_{\cl}(\wo).
\end{gather*}
In particular, for $q<1$ we have $\kappa_q(\wo)\neq s_{\cl}(\wo)\kappa_{\cl}(\wo)$, and $\tau_q(\wo)\neq s_{\cl}(\wo)\tau_{\cl}(\wo)$ whenever $\tau_{\cl}(\wo)\neq0$.
\end{corollary}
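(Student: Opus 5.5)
The limits as $t\to\wo$ follow from \cref{thm:C} for the fixed value of $q$. As $t\to\wo$ we have $\abs{\delta(t)}=(1-q)\abs{t-\wo}\to0$, so $t\in K_\eta$ eventually. Combining the estimates of \cref{thm:C} with the continuity of $s_{\cl},\kappa_{\cl},\tau_{\cl},F_{\cl}$ at $\wo$ gives the four limits directly. For instance, $\abs{\kappa_q(t)-\frac{\qn{2}}{2}s_{\cl}(\wo)\kappa_{\cl}(\wo)}$ is at most $C\abs{\delta(t)}$ plus $\frac{\qn{2}}{2}\abs{(s_{\cl}\kappa_{\cl})(t)-(s_{\cl}\kappa_{\cl})(\wo)}$. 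The remaining limits are argued the same way, and $\beta_q=O(\delta)$ gives $\beta_q(t)\to0$.

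For differentiability of $F_q$ at $\wo$ I would use \cref{prop:frames}. On $K_\eta$ it gives $T_q(t)=T_{\cl}(m(t))+O(\delta^2)$ and $B_q(t)=B_{\cl}(c(t))+O(\delta^2)$. Since $\delta^2=(1-q)^2(t-\wo)^2=o(\abs{t-\wo})$, and since $m(t)-\wo=\frac{1+q}{2}(t-\wo)=\frac{\qn{2}}{2}(t-\wo)$ and $c(t)-\wo=\frac{1+q+q^2}{3}(t-\wo)=\frac{\qn{3}}{3}(t-\wo)$ by \cref{lem:sigma}(a), Taylor expansion of the $C^1$ (indeed $C^2$) fields $T_{\cl},B_{\cl}$ yields
\[
T_q(t)=T_{\cl}(\wo)+\tfrac{\qn{2}}{2}(t-\wo)T_{\cl}'(\wo)+o(\abs{t-\wo}),\qquad
B_q(t)=B_{\cl}(\wo)+\tfrac{\qn{3}}{3}(t-\wo)B_{\cl}'(\wo)+o(\abs{t-\wo}).
\]
At $t=\wo$ itself we have $T_q(\wo)=\gamma'(\wo)/\abs{\gamma'(\wo)}=T_{\cl}(\wo)$, and $B_q(\wo)$ is determined by $T_q'(\wo)$. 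The first expansion gives $T_q'(\wo)=\frac{\qn{2}}{2}T_{\cl}'(\wo)=\frac{\qn{2}}{2}s_{\cl}\kappa_{\cl}N_{\cl}(\wo)$. Hence $B_q(\wo)=B_{\cl}(\wo)$ and $N_q(\wo)=N_{\cl}(\wo)$, so the second expansion is indeed a derivative statement: $B_q'(\wo)=\frac{\qn{3}}{3}B_{\cl}'(\wo)=-\frac{\qn{3}}{3}s_{\cl}\tau_{\cl}N_{\cl}(\wo)$. Then $N_q=B_q\times T_q$ is differentiable too, so $F_q$ is differentiable at $\wo$. The fixed-point invariants follow from \cref{def:frame}: $\kappa_q(\wo)=T_q'(\wo)\cdot N_q(\wo)=\frac{\qn{2}}{2}s_{\cl}\kappa_{\cl}(\wo)$ and $\tau_q(\wo)=-B_q'(\wo)\cdot N_q(\wo)=\frac{\qn{3}}{3}s_{\cl}\tau_{\cl}(\wo)$. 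These agree with the limits, consistently with \cref{prop:fixedpoint}.

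For $D^2\gamma(\wo)$, note that $D\gamma(t)=\gamma[t,\sigma(t)]=\gamma'(m(t))+O(\delta^2)$ by \cref{lem:centroid}, and $D\gamma(\wo)=\gamma'(\wo)$. So $D\gamma(t)=\gamma'(\wo)+\frac{\qn{2}}{2}(t-\wo)\gamma''(\wo)+o(\abs{t-\wo})$. Since $D^2\gamma(\wo)$ is by definition the classical derivative of $D\gamma$ at $\wo$, this gives $D^2\gamma(\wo)=\frac{\qn{2}}{2}\gamma''(\wo)$. The final inequalities are immediate: for $q<1$ we have $\qn{2}/2=(1+q)/2\neq1$ and $\qn{3}/3\neq1$, while $s_{\cl}\kappa_{\cl}>0$.

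The main obstacle is making sure the $O(\delta^2)$ errors of \cref{prop:frames} are genuinely $o(\abs{t-\wo})$ at fixed $q$, and that the value at $\wo$ matches the limit. That is, the expansions must be anchored at $F_q(\wo)=F_{\cl}(\wo)$ and not merely at the limiting values. This anchoring rests on the definition $B_q(\wo)\propto T_q(\wo)\times T_q'(\wo)$, with $T_q'(\wo)$ parallel to $N_{\cl}(\wo)$.
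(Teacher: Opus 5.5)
Your proposal is correct and follows the paper's proof essentially step for step. The limits come from \cref{thm:C} together with continuity of the classical quantities, and the derivative statements come from the centroid estimates of \cref{prop:frames}, the identities $m(t)-\wo=\frac{\qn{2}}{2}(t-\wo)$ and $c(t)-\wo=\frac{\qn{3}}{3}(t-\wo)$, and the anchoring $T_q(\wo)=T_{\cl}(\wo)$, $B_q(\wo)=B_{\cl}(\wo)$. The only difference is cosmetic: you treat the $O(\delta^2)$ errors as $o(\abs{t-\wo})$ Taylor remainders, whereas the paper notes that an error vanishing at $\wo$ and bounded by $C(t-\wo)^2$ has derivative $0$ there.
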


\begin{proof}
The limits follow from \cref{thm:C}. By \cref{lem:sigma}, $m(t)=\wo+\frac{\qn{2}}{2}(t-\wo)$, $c(t)=\wo+\frac{\qn{3}}{3}(t-\wo)$ and $\delta(t)^2=(1-q)^2(t-\wo)^2$. By \cref{prop:frames} and its proof, for $t$ near $\wo$,
\begin{gather*}
  v_q(t)=\gamma'(m(t))+O(\delta^2),\qquad T_q(t)=T_{\cl}(m(t))+O(\delta^2),\\
  B_q(t)=B_{\cl}(c(t))+O(\delta^2).
\end{gather*}
An error term that vanishes at $\wo$ and is bounded by $C(t-\wo)^2$ is differentiable at $\wo$ with derivative $0$. Since $v_q(\wo)=\gamma'(\wo)$ and $T_q(\wo)=T_{\cl}(\wo)$, we obtain $D^2\gamma(\wo)=v_q'(\wo)=\frac{\qn{2}}{2}\gamma''(\wo)$ and $T_q'(\wo)=\frac{\qn{2}}{2}T_{\cl}'(\wo)$. Consequently $B_q(\wo)$, the unit vector along $T_q(\wo)\times T_q'(\wo)=\frac{\qn{2}}{2}T_{\cl}(\wo)\times T_{\cl}'(\wo)$, equals $B_{\cl}(\wo)$, and $N_q(\wo)=N_{\cl}(\wo)$. Now the estimate for $B_q$ gives $B_q'(\wo)=\frac{\qn{3}}{3}B_{\cl}'(\wo)$, and therefore $\kappa_q(\wo)=T_q'(\wo)\cdot N_{\cl}(\wo)=\frac{\qn{2}}{2}s_{\cl}(\wo)\kappa_{\cl}(\wo)$ and $\tau_q(\wo)=-B_q'(\wo)\cdot N_{\cl}(\wo)=\frac{\qn{3}}{3}s_{\cl}(\wo)\tau_{\cl}(\wo)$.
\end{proof}

The comparison in \cref{thm:C} is with the classical invariants of the curve $\gamma$ itself. If a Hahn curve is given through its Hahn unit tangent, as in \cref{thm:B}, it is natural to compare instead with the classical curve whose classical tangent is $T$.

\begin{proposition}\label{prop:tangentfield}
Let $T\in C^3(K;\R^3)$ with $\abs{T}\equiv1$ and $T'\neq0$ on $K$, let $\gamma$ be the unit-speed Hahn curve with $D\gamma=T$ (for instance $\gamma(t)=p+\int_{\wo}^{t}T(x)\,d_{q,\omega}x$), and let $\Gamma$ be a classical curve with $\Gamma'=T$, with curvature $\kappa_\Gamma=\abs{T'}$ and torsion $\tau_\Gamma=\det(T,T',T'')/\abs{T'}^2$. There are $C,\eta>0$, depending only on $T$ and $K$, such that for all $q\in(0,1)$ and all $t\in K^*$ with $\abs{\delta(t)}\le\eta$,
\[
  \abs{\kappa-\kappa_\Gamma}\le C\abs{\delta},\qquad \Bigl\lvert\tau-\frac{\qn{2}}{2}\,\tau_\Gamma\Bigr\rvert\le C\abs{\delta},\qquad \Bigl\lvert\beta+\frac{\delta}{2}\kappa_\Gamma^2\Bigr\rvert\le C\delta^2 .
\]
Moreover, $\gamma$ is Frenet-regular on $K_\eta:=\{t\in K:\abs{\delta(t)}\le\eta\}$, $\kappa(\wo)=\kappa_\Gamma(\wo)$ and $\tau(\wo)=\frac{\qn{2}}{2}\tau_\Gamma(\wo)$.
\end{proposition}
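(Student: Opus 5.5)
The proof follows the pattern of \cref{prop:frames} and \cref{thm:C}, now with $T$ itself as the smooth object; here $D\gamma=T$ exactly, so no centroid shift enters at the level of the tangent. Write $T_0:=T(t)$, $T_1:=T(\sigma(t))$, $T_2:=T(\sigma^2(t))$ and $t_i:=\sigma^i(t)$.

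\emph{Step 1 (well-definedness and the binormal).} We have $T_0\times T_1=\delta\,T_0\times T[t_0,t_1]$. By \cref{lem:centroid} (the first estimate needs only $C^3$), $T[t_0,t_1]=T'(m(t))+O(\delta^2)$. Since $T'(m)=T'(t)+O(\delta)$ and $T\perp T'$ with $\abs{T'}$ bounded below on $K$, the vector $T_0\times T[t_0,t_1]$ stays away from $0$ for $\abs{\delta}\le\eta$. Hence $T_0,T_1$ are independent, and likewise $T_1,T_2$. As in the proof of \cref{thm:A}, $B(t)=\sgn(\delta)\,T_0\times T_1/\abs{T_0\times T_1}$. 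We have $T_0=T(m)-\tfrac{\delta}{2}T'(m)+O(\delta^2)$, and the $T'(m)$ term is killed in the cross product with $T'(m)$. This gives $B(t)=B_\Gamma(m(t))+O(\delta^2)$ with $B_\Gamma=T\times T'/\abs{T'}$; this step uses only $T\in C^3$, through $T''$ in the Taylor expansion. Similarly $N(t)=N_\Gamma(m(t))+O(\delta)$.

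\emph{Step 2 (curvature and $\beta$).} We have $\sin\theta=(T_1-T_0)\cdot N(t)=\delta\,T[t_0,t_1]\cdot N(t)$. By Step 1, $T[t_0,t_1]\cdot N(t)=T'(m)\cdot N_\Gamma(m)+O(\delta)=\kappa_\Gamma(t)+O(\delta)$. Therefore $\kappa=\sin\theta/\delta=\kappa_\Gamma+O(\delta)$. Then $\beta=-\sin^2\theta/\bigl(\delta(1+\cos\theta)\bigr)=-\tfrac{\delta}{2}\kappa_\Gamma^2+O(\delta^2)$, which is where no $\qn{2}$ factor appears, in contrast with \cref{thm:C}.

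\emph{Step 3 (torsion).} Exactly as in \cref{thm:C}, $\sin\varphi=\bigl(B(t)\times(B(\sigma(t))-B(t))\bigr)\cdot T_1$. By Step 1 and \eqref{eq:centroidsteps},
\[
B(\sigma(t))-B(t)=B_\Gamma(m(\sigma(t)))-B_\Gamma(m(t))+O(\delta^2)=\tfrac{\qn{2}}{2}\,\delta\,B_\Gamma'(m)+O(\delta^2).
\]
Here $B_\Gamma'=-\tau_\Gamma N_\Gamma$, with $\tau_\Gamma=\det(T,T',T'')/\abs{T'}^2$ as the torsion of the unit-speed curve $\Gamma$. Using $B_\Gamma\times N_\Gamma=-T$ and $T_1=T(t)+O(\delta)$, this yields $\sin\varphi=\tfrac{\qn{2}}{2}\delta\tau_\Gamma+O(\delta^2)$, and $\tau=\cos\theta\sin\varphi/\delta$ with $\cos\theta=1+O(\delta^2)$ gives the estimate. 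The main obstacle is that $B_\Gamma'$ involves $T''$, and Step 1 gives only an $O(\delta^2)$ error, so the difference quotient of the error is $O(\delta)$. This suffices, but the error must be kept as a genuine $O(\delta^2)$ with constant uniform in $q$, which follows as in \cref{prop:frames} because all $t_i$ lie between $t$ and $\wo$ and $\abs{t_i-t_j}\le 2\abs{\delta}$.

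\emph{Step 4 (fixed point).} By Step 1 and the uniform bounds, $T$ is continuous and differentiable at $\wo$, and $T'(\wo)\neq0$. Hence $\gamma$ is Frenet-regular on the $\sigma$-invariant interval $K_\eta$, and $\kappa(\wo)=\abs{T'(\wo)}=\kappa_\Gamma(\wo)$ by \cref{prop:fixedpoint}(a). For the torsion, $B(t)=B_\Gamma(m(t))+O((t-\wo)^2)$ with $m(t)=\wo+\tfrac{\qn{2}}{2}(t-\wo)$. So $B$ is differentiable at $\wo$ with $B'(\wo)=\tfrac{\qn{2}}{2}B_\Gamma'(\wo)$, and $N(\wo)=N_\Gamma(\wo)$. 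Therefore $\tau(\wo)=-B'(\wo)\cdot N(\wo)=\tfrac{\qn{2}}{2}\tau_\Gamma(\wo)$, exactly as in the proof of \cref{cor:fixedpoint}.
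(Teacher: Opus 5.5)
Your proposal is correct and follows the paper's proof essentially step for step. Both use \cref{lem:centroid} to get $B(t)=B_\Gamma(m(t))+O(\delta^2)$, treat curvature, $\beta$ and torsion as in \cref{thm:C} with $c(t)$ replaced by $m(t)$, and obtain the fixed-point values as in \cref{cor:fixedpoint}. The one step you leave implicit is $B(\wo)=B_\Gamma(\wo)$, which follows from $DT(\wo)=T'(\wo)$ and is needed before the $O((t-\wo)^2)$ estimate gives differentiability of $B$ at $\wo$.
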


\begin{proof}
Let $(T,N_\Gamma,B_\Gamma)$ be the Frenet frame of $\Gamma$. By \cref{lem:centroid}, $T[t,\sigma(t)]=T'(m(t))+O(\delta^2)$, and $T(t)=T(m)-\frac\delta2T'(m)+O(\delta^2)$. Hence $T(t)\times T[t,\sigma(t)]=(T\times T')(m)+O(\delta^2)$, and since $T(t)\times T(\sigma(t))=\delta\,T(t)\times T[t,\sigma(t)]$,
\[
  B(t)=\sgn(\delta)\frac{T(t)\times T(\sigma(t))}{\abs{T(t)\times T(\sigma(t))}}=B_\Gamma(m(t))+O(\delta^2).
\]
In particular, $T(t)\times T(\sigma(t))\neq0$ if $\abs{\delta(t)}\le\eta$ with $\eta$ small, so $\gamma$ is Frenet-regular on $K_\eta$ (recall $T'(\wo)\neq0$). Consequently $N(t)=N_\Gamma(t)+O(\delta)$ and $\sin\theta=(T(\sigma(t))-T(t))\cdot N(t)=\delta\,T[t,\sigma(t)]\cdot N(t)=\delta\kappa_\Gamma(t)+O(\delta^2)$, which gives the estimates for $\kappa$ and $\beta$ as in the proof of \cref{thm:C}. The torsion is treated as there, with $c(t)$ replaced by $m(t)$ and $B_\Gamma'=-\tau_\Gamma N_\Gamma$, using~\eqref{eq:centroidsteps}. At $\wo$, $DT(\wo)=T'(\wo)$ gives $\kappa(\wo)=\abs{T'(\wo)}$ and $B(\wo)=B_\Gamma(\wo)$, so that $B(t)=B_\Gamma(m(t))+O((t-\wo)^2)$ gives $B'(\wo)=\frac{\qn{2}}{2}B_\Gamma'(\wo)$, as in the proof of \cref{cor:fixedpoint}.
\end{proof}

The factors in \cref{thm:C} disappear if the angles are divided by edge lengths instead of the parameter step. Normalizations by edge lengths are common for discrete curves. For polygons whose edges have length $1$, Sullivan~\cite{Sullivan2008} spreads the curvature at each vertex over the nearer halves of the two adjacent edges and compares the curvature densities $\theta$, $2\sin(\theta/2)$ and $2\tan(\theta/2)$, where $\theta$ is the turning angle. For arbitrary edge lengths, the same allocation gives as curvature density the turning angle divided by the mean length of the two adjacent edges; this quotient appears in \cref{prop:polygons}. Hoffmann~\cite{Hoffmann2009} defines the discrete curvature of arc-length parametrized polygons, whose edges have equal length, through $\tan(\theta/2)$. A different notion of discrete curvature and torsion, based on cross-ratios of consecutive vertices, was introduced by M\"uller and Vaxman~\cite{MullerVaxman2021}, who analyse its behaviour for sampled curves as the sampling density increases. For a Frenet-regular Hahn curve, with the edge lengths $\ell(t)=\abs{\gamma(\sigma(t))-\gamma(t)}$ of its orbit polygons (\cref{prop:reparam}), let
\begin{equation}\label{eq:edgeinv}
  \hat\kappa(t):=\frac{2\abs{\sin\theta(t)}}{\ell(t)+\ell(\sigma(t))},\qquad
  \hat\tau(t):=\frac{3\sgn(\delta(t))\sin\varphi(t)}{\ell(t)+\ell(\sigma(t))+\ell(\sigma^2(t))}\qquad(t\in\Is).
\end{equation}
The \emph{edge-normalized curvature} $\hat\kappa$ and \emph{torsion} $\hat\tau$ depend only on the orbit polygons, and by \cref{prop:reparam} they transform by composition with $h$ under admissible reparametrizations. By~\eqref{eq:invariants},
\begin{equation}\label{eq:edgeid}
  \kappa=\frac{\ell+\ell\circ\sigma}{2\abs{\delta}}\,\hat\kappa,\qquad
  \tau=\cos\theta\,\frac{\ell+\ell\circ\sigma+\ell\circ\sigma^2}{3\abs{\delta}}\,\hat\tau\qquad\text{on }\Is .
\end{equation}
Since $\ell(\sigma^k(t))=q^k\abs{\delta(t)}\,\abs{D\gamma(\sigma^k(t))}$, for a unit-speed curve these identities read $\kappa=\frac{\qn{2}}{2}\hat\kappa$ and $\tau=\frac{\qn{3}}{3}\cos\theta\,\hat\tau$. The same definitions apply in the Setting, wherever $\theta$ and $\varphi$ are defined.

\begin{corollary}\label{cor:edge}
In the Setting, there are $C,\eta>0$, depending only on $\gamma$ and $K$, such that $\abs{\hat\kappa-\kappa_{\cl}}\le C\abs{\delta}$ and $\abs{\hat\tau-\tau_{\cl}}\le C\abs{\delta}$ for all $q\in(0,1)$ and all $t\in K^*$ with $\abs{\delta(t)}\le\eta$. In particular, $\hat\kappa\to\kappa_{\cl}$ and $\hat\tau\to\tau_{\cl}$ in both limits of \cref{cor:classical,cor:fixedpoint}.
\end{corollary}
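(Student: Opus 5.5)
The plan is to derive both estimates from \cref{thm:C}, using the identities~\eqref{eq:edgeid}. The only new input needed is an expansion of the edge lengths. By \cref{rem:polygons}, $\ell(\sigma^k(t))=q^k\abs{\delta(t)}\,s_q(\sigma^k(t))$. By \cref{prop:frames}, $s_q(\sigma^k(t))=s_{\cl}(m(\sigma^k(t)))+O(\delta^2)$, and $m(\sigma^k(t))-t=O(\delta)$ because $\sigma^k(t)$ lies between $t$ and $\wo$ with $\abs{\sigma^k(t)-t}\le(1+q+\dots)\abs{\delta}$ for $k\le3$. Since $s_{\cl}$ is $C^1$ on $K$, this gives $s_q(\sigma^k(t))=s_{\cl}(t)+O(\abs{\delta})$ for $k=0,1,2$. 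We will need $\eta$ small enough that \cref{prop:frames} applies at $\sigma^k(t)$ for $k\le3$; this is automatic, since $\abs{\delta(\sigma^k(t))}\le\abs{\delta(t)}$. Summing, we obtain
\[
  \frac{\ell+\ell\circ\sigma}{2\abs{\delta}}=\frac{\qn{2}}{2}\,s_{\cl}+O(\abs{\delta}),\qquad
  \frac{\ell+\ell\circ\sigma+\ell\circ\sigma^2}{3\abs{\delta}}=\frac{\qn{3}}{3}\,s_{\cl}+O(\abs{\delta}),
\]
with constants independent of $q$.

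For the curvature, \eqref{eq:edgeinv} together with \cref{thm:C} gives $\abs{\sin\theta}=\abs{\delta}\kappa_q=\abs{\delta}\bigl(\frac{\qn{2}}{2}s_{\cl}\kappa_{\cl}+O(\abs{\delta})\bigr)$; here $\kappa_q>0$ because $\sgn\sin\theta=\sgn\delta$ (\cref{thm:A}). Dividing by the first expansion above, and using that $\frac{\qn{2}}{2}s_{\cl}\ge\frac12\min_K s_{\cl}>0$ uniformly in $q\in(0,1)$, yields $\hat\kappa=\kappa_{\cl}+O(\abs{\delta})$. The key point is that the factor $\qn{2}/2$ is bounded below by $1/2$, so the quotient estimate is uniform even as $q\to0$.

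For the torsion, the proof of \cref{thm:C} gives $\sin\varphi=\frac{\qn{3}}{3}\delta\,s_{\cl}\tau_{\cl}+O(\delta^2)$. Hence $\sgn(\delta)\sin\varphi=\abs{\delta}\bigl(\frac{\qn{3}}{3}s_{\cl}\tau_{\cl}+O(\abs{\delta})\bigr)$. Dividing by the second expansion, whose leading term is bounded below by $\frac13\min_K s_{\cl}$, gives $\hat\tau=\tau_{\cl}+O(\abs{\delta})$. One could instead use~\eqref{eq:edgeid} with $\cos\theta=1+O(\delta^2)$, but working with $\sin\varphi$ directly avoids the extra factor.

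The limit statements then follow exactly as in \cref{cor:classical,cor:fixedpoint}. As $q\to1$ we have $\abs{\delta}\le(1-q)\operatorname{diam}K$, and as $t\to\wo$ we have $\delta\to0$. I expect the main obstacle to be the bookkeeping that makes all constants independent of $q$. In particular, $s_{\cl}$ is evaluated at the shifted points $m(\sigma^k(t))$ rather than at $t$, which costs an $O(\abs{\delta})$ error; this error is acceptable because the claimed estimate is itself only first order.
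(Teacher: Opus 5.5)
Your proposal is correct and is essentially the paper's proof: you expand the edge-length factors in~\eqref{eq:edgeid} as $\frac{\qn{2}}{2}s_{\cl}+O(\abs{\delta})$ and $\frac{\qn{3}}{3}s_{\cl}+O(\abs{\delta})$ using the speed estimates at $\sigma^k(t)$, and then divide the expansions from \cref{thm:C} by them, with uniformity in $q$ coming from $\frac{\qn{2}}{2}\ge\frac12$, $\frac{\qn{3}}{3}\ge\frac13$ and $s_{\cl}$ being bounded below on $K$. The only difference is cosmetic: you take the expansion of $\sin\varphi$ from inside the proof of \cref{thm:C}, whereas the paper uses the expansion of $\tau_q$ together with $\cos\theta=1+O(\delta^2)$.
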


\begin{proof}
By \cref{thm:C}, $s_q(\sigma^k(t))=s_{\cl}(t)+O(\abs{\delta})$ for $k=0,1,2$, since $\abs{\sigma^k(t)-t}\le2\abs{\delta(t)}$. Hence the factors in~\eqref{eq:edgeid} are $\frac{\qn{2}}{2}s_{\cl}+O(\abs{\delta})$ and $\frac{\qn{3}}{3}s_{\cl}+O(\abs{\delta})$, where $\frac{\qn{2}}{2}\ge\frac12$, $\frac{\qn{3}}{3}\ge\frac13$ and $s_{\cl}$ is bounded below on $K$; moreover $\cos\theta=1+O(\delta^2)$. The claim now follows from~\eqref{eq:edgeid} and the expansions of $\kappa_q$ and $\tau_q$ in \cref{thm:C}.
\end{proof}

The same mechanism applies to arbitrary polygons inscribed in a smooth curve. Let $\gamma$ be as in the Setting, and let $(x_n)$ be a strictly monotone sequence in $J$ that converges to a point $x^*\in J$. For the polygon with vertices $P_n:=\gamma(x_n)$ let $\ell_n:=\abs{P_{n+1}-P_n}$ and $T_n:=(P_{n+1}-P_n)/\ell_n$, let $\theta_n\in[0,\pi]$ be the angle between $T_n$ and $T_{n+1}$, and, if $T_n\times T_{n+1}$ and $T_{n+1}\times T_{n+2}$ do not vanish, let $B_n:=T_n\times T_{n+1}/\abs{T_n\times T_{n+1}}$ and let $\varphi_n\in(-\pi,\pi]$ be the angle from $B_n$ to $B_{n+1}$ about $T_{n+1}$, i.e.\ $\cos\varphi_n=B_n\cdot B_{n+1}$ and $\sin\varphi_n=(B_n\times B_{n+1})\cdot T_{n+1}$. For the orbit polygon of a Frenet-regular Hahn curve with base point $t_0\in\Is$, these are the angles $\sgn(\delta(t_0))\,\theta(\sigma^n(t_0))$ and $\sgn(\delta(t_0))\,\varphi(\sigma^n(t_0))$ of \cref{def:frame} (for $t_0>\wo$, $T_n=-T(\sigma^n(t_0))$ and $B_n=-B(\sigma^n(t_0))$); thus~\eqref{eq:edgeinv} is the analogue of the quotients below with the angles replaced by their sines.

\begin{proposition}\label{prop:polygons}
In this situation $T_n\times T_{n+1}\neq0$ for all large $n$, and
\begin{gather*}
  \frac{2\theta_n}{\ell_n+\ell_{n+1}}=\kappa_{\cl}(x^*)+O(\abs{x_n-x^*}),\qquad
  \frac{3\varphi_n}{\ell_n+\ell_{n+1}+\ell_{n+2}}=\tau_{\cl}(x^*)+O(\abs{x_n-x^*}),
\end{gather*}
with constants that depend only on $\gamma$ near $x^*$. Consequently, if the ratios $\ell_{n+1}/\ell_n$ are bounded, then, with a constant that also depends on $\sup_n\ell_{n+1}/\ell_n$,
\[
  \frac{\varphi_n}{\theta_n}=\frac23\,\frac{\ell_n+\ell_{n+1}+\ell_{n+2}}{\ell_n+\ell_{n+1}}\,\frac{\tau_{\cl}(x^*)}{\kappa_{\cl}(x^*)}+O(\abs{x_n-x^*});
\]
in particular, $\lim_{n\to\infty}\varphi_n/\theta_n=\frac{2\qn{3}}{3\qn{2}}\,\tau_{\cl}(x^*)/\kappa_{\cl}(x^*)$ if $\ell_{n+1}/\ell_n\to q\in(0,1)$, and the limit is $\tau_{\cl}(x^*)/\kappa_{\cl}(x^*)$ if $\ell_{n+1}/\ell_n\to1$.
\end{proposition}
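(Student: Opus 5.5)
The plan is to redo the proofs of \cref{prop:frames} and \cref{thm:C} for an arbitrary strictly monotone sequence instead of a $\sigma$-orbit. Put $h_n:=x_{n+1}-x_n$, which has constant sign and satisfies $\abs{h_n}\le\abs{x_n-x^*}$, and write $\epsilon_n:=\abs{x_n-x^*}$. All points $x_n,x_{n+1},x_{n+2},x_{n+3}$ lie between $x_n$ and $x^*$, hence within $\epsilon_n$ of $x^*$. The first normalization is edge length: since $P_{n+1}-P_n=h_n\gamma[x_n,x_{n+1}]$, \cref{lem:centroid} gives
\[
  \ell_n=\abs{h_n}\,s_{\cl}(m_n)+O(h_n^3),\qquad T_n=\sgn(h_n)\,T_{\cl}(m_n)+O(h_n^2),\qquad m_n:=\tfrac12(x_n+x_{n+1}).
\]
Next, the identity~\eqref{eq:crossdd} with $t_i=x_{n+i}$, together with~\eqref{eq:cross2}, shows that $T_n\times T_{n+1}$ is a positive multiple of $\gamma'(c_n)\times\gamma''(c_n)+O(\epsilon_n^2)$, where $c_n$ is the centroid of $x_n,x_{n+1},x_{n+2}$. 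This vector is nonzero for large $n$ (first claim), and it yields $B_n=B_{\cl}(c_n)+O(\epsilon_n^2)$. The sign works out because $h_nh_{n+1}>0$ and $x_{n+2}-x_n$ has the sign of $h_n$; alternatively, the sign of $T$ is irrelevant, since $T_n\times T_{n+1}$ picks up the factor $\sgn(h_n)^2$.

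For the curvature, write $2\sin(\theta_n/2)=\abs{T_{n+1}-T_n}$. The consecutive midpoints satisfy $m_{n+1}-m_n=\frac12(h_n+h_{n+1})$, so
\[
  \abs{T_{n+1}-T_n}=\tfrac12\abs{h_n+h_{n+1}}\,(s_{\cl}\kappa_{\cl})(m_n)+O(\epsilon_n^2).
\]
Since $\theta_n\to0$, we have $\theta_n=2\sin(\theta_n/2)+O(\theta_n^3)$. Dividing by $\frac12(\ell_n+\ell_{n+1})=\frac12\abs{h_n+h_{n+1}}\,s_{\cl}(m_n)+O(\epsilon_n^2\abs{h_n+h_{n+1}})$ would give $\kappa_{\cl}(m_n)+O(\cdot)$, and $\kappa_{\cl}(m_n)=\kappa_{\cl}(x^*)+O(\epsilon_n)$.

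The main obstacle is that the error $O(\epsilon_n^2)$ must be compared with $\abs{h_n+h_{n+1}}$, which may be much smaller than $\epsilon_n$. Only relative errors $O(\epsilon_n)$ are needed, so I would sharpen the expansion into one relative to the step sizes. Using the Hermite--Genocchi representation, the difference $\gamma[x_{n+1},x_{n+2}]-\gamma[x_n,x_{n+1}]=(x_{n+2}-x_n)\gamma[x_n,x_{n+1},x_{n+2}]$ factors out $x_{n+2}-x_n$ exactly. Then $T_{n+1}-T_n$ equals $(x_{n+2}-x_n)$ times a quantity that is $\frac12(s\kappa N)_{\cl}+O(\epsilon_n)$. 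Here I would treat the normalization by $\abs{\gamma[\cdot,\cdot]}$ via the mean value theorem on the map $v\mapsto v/\abs{v}$, whose derivative is Lipschitz. This makes every error relative, which I believe is the argument the statement's constants require.

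The torsion follows the same scheme one level higher. The relation $B_n\times B_{n+1}=\sin\varphi_n\,T_{n+1}$ and $B_{n+1}-B_n=(c_{n+1}-c_n)(-s_{\cl}\tau_{\cl}N_{\cl})+(\text{relative error})$ hold with $c_{n+1}-c_n=\frac13(x_{n+3}-x_n)$. Again the difference of the cross-product expressions has to be written exactly as a multiple of $x_{n+3}-x_n$, now using third divided differences and $C^4$. This is the technically heaviest step. Dividing by $\frac13(\ell_n+\ell_{n+1}+\ell_{n+2})$, which is $\frac13\abs{x_{n+3}-x_n}s_{\cl}(1+O(\epsilon_n))$, gives the torsion formula.

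For the ratio, divide the two expansions: $\kappa_{\cl}(x^*)>0$, and the ratio $(\ell_n+\ell_{n+1}+\ell_{n+2})/(\ell_n+\ell_{n+1})$ is bounded in terms of $\sup\ell_{n+1}/\ell_n$, which controls the error constant. If $\ell_{n+1}/\ell_n\to q$, this quotient tends to $(1+q+q^2)/(1+q)=\qn{3}/\qn{2}$, which gives the limit $\frac{2\qn{3}}{3\qn{2}}\,\tau_{\cl}(x^*)/\kappa_{\cl}(x^*)$. If $\ell_{n+1}/\ell_n\to1$, the quotient tends to $\frac32$, and the limit is $\tau_{\cl}(x^*)/\kappa_{\cl}(x^*)$.
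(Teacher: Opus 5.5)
Your proposal is correct. Its skeleton matches the paper's: edge lengths and $T_n$ from \cref{lem:centroid}, $B_n$ from \eqref{eq:crossdd}--\eqref{eq:cross2}, $\sin\varphi_n=(B_n\times B_{n+1})\cdot T_{n+1}$, then division. The difference is how you make the errors relative.

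\textbf{How the paper makes the errors relative.} The paper keeps the errors of \cref{lem:centroid} in terms of the local node spans:
\begin{itemize}
\item $T_k=\varsigma T_{\cl}(m_k)+O(h_k^2)$, with $\varsigma=\sgn h_n$;
\item $B_k=\varsigma B_{\cl}(c_k)+O(H_n^2)$ for $k=n,n+1$, with $H_n=\abs{x_{n+3}-x_n}$.
\end{itemize}
Monotonicity then makes these spans comparable to the centroid increments: $\abs{m_{n+1}-m_n}=\frac12(\abs{h_n}+\abs{h_{n+1}})$ and $\abs{c_{n+1}-c_n}=H_n/3$. Since also $\abs{h_k},H_n\le r_n$, one gets $h_n^2+h_{n+1}^2\le 2r_n\abs{m_{n+1}-m_n}$ and $H_n^2\le 3r_n\abs{c_{n+1}-c_n}$. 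So the errors are already relative $O(r_n)$. The angles are then compared with those between $T_{\cl}(m_n),T_{\cl}(m_{n+1})$ and between $B_{\cl}(c_n),B_{\cl}(c_{n+1})$ via the classical Frenet equations.

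The obstacle you describe arises only because you coarsened your own bound $O(h_n^2)$ to $O(\epsilon_n^2)$. Keeping the local spans, neither the exact factorization nor the third divided differences for the torsion are needed.

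\textbf{Your route also closes.} Put $v_k=\gamma[x_k,x_{k+1}]$ and $w_n=v_n\times\gamma[x_n,x_{n+1},x_{n+2}]$, so that $B_n=\sgn(h_n)\,w_n/\abs{w_n}$. One has exactly
\[
  w_{n+1}-w_n=(x_{n+3}-x_n)\bigl(v_n\times\gamma[x_n,\dots,x_{n+3}]+(x_{n+2}-x_n)\,\gamma[x_n,x_{n+1},x_{n+2}]\times\gamma[x_n,\dots,x_{n+3}]\bigr).
\]
Apply the derivative of $w\mapsto w/\abs{w}$ at $\frac12\gamma'\times\gamma''=\frac12 s_{\cl}^3\kappa_{\cl}B_{\cl}$ to $\frac16\gamma'\times\gamma'''$; this gives $-\frac13 s_{\cl}\tau_{\cl}N_{\cl}$. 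Hence $B_{n+1}-B_n=\sgn(h_n)(c_{n+1}-c_n)\bigl(-(s_{\cl}\tau_{\cl}N_{\cl})(x^*)+O(\epsilon_n)\bigr)$, using that $\gamma'''$ is Lipschitz (the $C^4$ hypothesis).

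\textbf{What each route buys.} Yours is a direct discrete-derivative computation in which the factors $\frac12$ and $\frac13$ appear as Hermite--Genocchi constants, with no centroid picture. The paper's reuses \eqref{eq:cross2} verbatim and needs no new computation. You should, however, actually carry out the torsion step rather than leave it as the ``heaviest step''.

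\textbf{Minor slips, none fatal.}
\begin{itemize}
\item The factor $x_{n+2}-x_n$ in \eqref{eq:crossdd} has sign $\sgn(h_n)$, so $B_n=\sgn(h_n)B_{\cl}(c_n)+\dots$, not $B_{\cl}(c_n)$. This is harmless, since only $B_n\times B_{n+1}$ enters.
\item The error in $\ell_n+\ell_{n+1}$ relative to $\abs{x_{n+2}-x_n}\,s_{\cl}(m_n)$ is $O(\epsilon_n\abs{x_{n+2}-x_n})$, not $O(\epsilon_n^2\abs{x_{n+2}-x_n})$. This is still sufficient.
\item To pass from $\sin\varphi_n$ to $\varphi_n$ you need $\varphi_n\to0$. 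This follows from $B_n\cdot B_{n+1}\to1$.
\end{itemize}
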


\begin{proof}
Put $r_n:=\abs{x_n-x^*}$, $h_n:=x_{n+1}-x_n$, $m_n:=(x_n+x_{n+1})/2$, $c_n:=(x_n+x_{n+1}+x_{n+2})/3$ and $H_n:=\abs{x_{n+3}-x_n}$, and let $\varsigma:=\sgn h_n$, which does not depend on $n$. The nodes $x_k$, $k\ge n$, lie between $x_n$ and $x^*$, so that $H_n\le r_n$, $\abs{m_{n+1}-m_n}=(\abs{h_n}+\abs{h_{n+1}})/2$ and $\abs{c_{n+1}-c_n}=H_n/3$. Since $P_{n+1}-P_n=h_n\gamma[x_n,x_{n+1}]$, \cref{lem:centroid} gives $T_n=\varsigma\,T_{\cl}(m_n)+O(h_n^2)$ and $\ell_n=s_{\cl}(x^*)\abs{h_n}\bigl(1+O(r_n)\bigr)$, so that
\begin{gather*}
  \ell_n+\ell_{n+1}=2s_{\cl}(x^*)\abs{m_{n+1}-m_n}\bigl(1+O(r_n)\bigr),\\
  \ell_n+\ell_{n+1}+\ell_{n+2}=3s_{\cl}(x^*)\abs{c_{n+1}-c_n}\bigl(1+O(r_n)\bigr).
\end{gather*}
The identity~\eqref{eq:crossdd} and the estimate~\eqref{eq:cross2} hold for arbitrary nodes, with the largest distance between the nodes in place of $\abs{\delta}$; since $h_kh_{k+1}(x_{k+2}-x_k)$ has the sign $\varsigma$, they give $B_k=\varsigma\,B_{\cl}(c_k)+O(H_n^2)$ for $k=n,n+1$, because the nodes $x_k,x_{k+1},x_{k+2}$ lie in an interval of length at most $H_n$. By the classical Frenet equations, the angle between $T_{\cl}(m_n)$ and $T_{\cl}(m_{n+1})$ equals $(s_{\cl}\kappa_{\cl})(x^*)\abs{m_{n+1}-m_n}\bigl(1+O(r_n)\bigr)$, and $B_{\cl}(c_n)\times B_{\cl}(c_{n+1})=(s_{\cl}\tau_{\cl})(x^*)\,(c_{n+1}-c_n)\,T_{\cl}(m_{n+1})+O(r_nH_n)$. As $\abs{h_n},\abs{h_{n+1}}\le2\abs{m_{n+1}-m_n}$ and $\varsigma\,(c_{n+1}-c_n)=\abs{c_{n+1}-c_n}$, we obtain $\theta_n=(s_{\cl}\kappa_{\cl})(x^*)\abs{m_{n+1}-m_n}\bigl(1+O(r_n)\bigr)$, which is positive for large $n$, and $\sin\varphi_n=(B_n\times B_{n+1})\cdot T_{n+1}=(s_{\cl}\tau_{\cl})(x^*)\abs{c_{n+1}-c_n}+O(r_nH_n)$, while $\varphi_n\to0$. Dividing by the sums of edge lengths gives the two estimates. If the ratios $\ell_{n+1}/\ell_n$ are bounded, then so is $\Lambda_n:=(\ell_n+\ell_{n+1}+\ell_{n+2})/(\ell_n+\ell_{n+1})$, and the quotient of the two estimates gives the formula for $\varphi_n/\theta_n$, because $\kappa_{\cl}(x^*)>0$. Finally, $\Lambda_n$ tends to $(1+q+q^2)/(1+q)$ if $\ell_{n+1}/\ell_n\to q$, and to $\frac32$ if $\ell_{n+1}/\ell_n\to1$.
\end{proof}

\begin{remark}\label{rem:qnumbers}
The vectors $T$ and $B$ are determined by $k$ consecutive points of an orbit, namely by $k=2$ and $k=3$ points for the chord field of a smooth curve and by $k=1$ and $k=2$ values of a smooth tangent field. They approximate the corresponding classical vectors at the centroid of the parameters of these points with an error $O(\delta^2)$ (\cref{lem:centroid,prop:frames,prop:tangentfield}), and the centroids of $k$ consecutive points at $t$ and at $\sigma(t)$ are $\frac{\qn{k}}{k}\delta(t)$ apart (\cref{lem:sigma}). The Hahn curvature and torsion compare two such vectors one step apart, but divide by $\delta(t)$; this gives the factors in \cref{thm:C,prop:tangentfield}. For $q=1$, i.e.\ for uniform parameter steps, all factors $\qn{k}/k$ equal $1$, which is why these effects are invisible for polygons with equal steps. Dividing by the mean length of two or three consecutive edges instead removes the factors of \cref{thm:C} (\cref{cor:edge,prop:polygons}). The orbit polygons of a Hahn curve are inscribed in the curve itself: if $\gamma=p+\int_{\wo}^{t}T(x)\,d_{q,\omega}x$ is given by a smooth tangent field $T$, then $\gamma$ is again smooth, as termwise differentiation of the series~\eqref{eq:JN} shows, and $\hat\kappa$ and $\hat\tau$ approximate the classical invariants of $\gamma$, not those of the curve $\Gamma$ with $\Gamma'=T$ used in \cref{prop:tangentfield}. The identity $D^2\gamma(\wo)=\frac{\qn{2}}{2}\gamma''(\wo)$ is the analogue for curves of $D_{q,0}^2\,t^2=\qn{2}$.

We nevertheless use $\kappa$ and $\tau$ as the primary invariants, for three reasons. First, they are the entries of the matrix in the Hahn Frenet equations $DF=MF$, on which the fundamental theorem rests, and at $\wo$ no division by $\delta$ occurs at all, since $\kappa(\wo)$ and $\tau(\wo)$ are computed from the classical derivatives $T'(\wo)$ and $B'(\wo)$ of the Hahn frame (\cref{prop:fixedpoint}). Second, the quotient of the two factors is not affected by any common normalization of $\kappa$ and $\tau$, and it is already visible in the angles of the orbit polygons. Indeed, $\delta$ cancels in $\tau/\kappa=\cos\theta\sin\varphi/\sin\theta$, and whenever $B$ is differentiable at $\wo$, $\tau(\wo)/\kappa(\wo)=\lim_{t\to\wo}\varphi(t)/\theta(t)$ by \cref{prop:fixedpoint}, a limit of ratios of angles of the orbit polygons, and the same factor $2\qn{3}/(3\qn{2})$ appears in $\lim_{n\to\infty}\varphi_n/\theta_n$ for every polygon as in \cref{prop:polygons} whose edge lengths decay with ratio $q$. By \cref{cor:fixedpoint}, for every regular $C^4$ reparametrization $\tilde\gamma=\gamma\circ\nu$ (a different Hahn curve unless $\nu$ is a linear change of parameter about $\wo$; see \cref{rem:coordfree}), with Hahn invariants $\tilde\kappa_q$ and $\tilde\tau_q$, we have
\[
  \frac{\tilde\tau_q(\wo)}{\tilde\kappa_q(\wo)}=\frac{2\qn{3}}{3\qn{2}}\,\frac{\tau_{\cl}}{\kappa_{\cl}}\bigl(\nu(\wo)\bigr),
\]
so that neither a change of parameter nor a common normalization of $\kappa$ and $\tau$ removes the factor $2\qn{3}/(3\qn{2})\neq1$. Third, the Lancret ratio of generalized Hahn helices is deformed even for the edge-normalized invariants (\cref{cor:lancretratio}).
\end{remark}

\section{Generalized Hahn helices: an intrinsic Lancret theorem}\label{sec:helices}

\begin{definition}\label{def:helix}
Let $\vartheta\in(0,\pi/2)$. A Frenet-regular Hahn curve $\gamma$ is a \emph{generalized Hahn helix} with \emph{axis} $u\in\R^3$, $\abs{u}=1$, and \emph{angle} $\vartheta$ if $T(t)\cdot u=\cos\vartheta$ for all $t\in I$.
\end{definition}

For polygons, analogues of parts (i)--(iii) of the following lemma, in a different indexing convention and with unsigned turning angles, are contained in the conserved-vector characterization of discrete general helices by Yoon, Lee and Lee~\cite[Proposition~3.6 and Corollaries~4.4 and~4.8]{YoonLeeLee2026}. We include the short proof, since we need the signed angles of \cref{def:frame} and the values at $\wo$.

\begin{lemma}\label{lem:helix}
Let $\gamma$ be a generalized Hahn helix with axis $u$ and angle $\vartheta$, and put $a:=\cos\vartheta$, $b:=N\cdot u$, $c:=B\cdot u$. Then for $t\in\Is$:
\begin{enumerate}[label=\textup{(\roman*)}]
\item $b(t)=a\tan\frac{\theta(t)}{2}=-a\,\beta(t)/\kappa(t)$;
\item $c(t)^2=1-a^2\sec^2\frac{\theta(t)}{2}$; in particular $\abs{\theta(t)}\le2\vartheta$;
\item $b(\sigma(t))=-\cos\varphi(t)\,b(t)+\sin\varphi(t)\,c(t)$ and $c(\sigma(t))=\sin\varphi(t)\,b(t)+\cos\varphi(t)\,c(t)$.
\end{enumerate}
Moreover $b(\wo)=0$ and $c(\wo)^2=\sin^2\vartheta$.
\end{lemma}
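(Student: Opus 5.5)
The whole argument rests on the rotation formulas~\eqref{eq:rotation} of \cref{thm:A}, applied to the constant scalar $T\cdot u=a$. For $t\in\Is$, dot the first line of~\eqref{eq:rotation} with $u$ and use $T(\sigma(t))\cdot u=T(t)\cdot u=a$. This gives $a=a\cos\theta+b\sin\theta$, hence $b\sin\theta=a(1-\cos\theta)$. Since $\sin\theta\neq0$ by \cref{thm:A}, we get $b=a(1-\cos\theta)/\sin\theta=a\tan(\theta/2)$. The second expression in (i) then follows from~\eqref{eq:invariants}: $\delta\beta=\cos\theta-1$ and $\delta\kappa=\sin\theta$, so $-\beta/\kappa=(1-\cos\theta)/\sin\theta$. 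Here $\abs{\theta}<\pi$, so $\tan(\theta/2)$ is finite.

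For (ii), we have $a^2+b^2+c^2=\abs{u}^2=1$ because $(T,N,B)$ is orthonormal. Substituting (i) gives $c^2=1-a^2(1+\tan^2(\theta/2))=1-a^2\sec^2(\theta/2)$. Then $c^2\ge0$ forces $\cos^2(\theta/2)\ge\cos^2\vartheta$. Since $\abs{\theta/2}<\pi/2$ and $\vartheta\in(0,\pi/2)$, this is equivalent to $\abs{\theta}/2\le\vartheta$.

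For (iii), dot the second and third lines of~\eqref{eq:rotation} with $u$ and insert $T\cdot u=a$. This gives
\[
b(\sigma(t))=-a\sin\theta\cos\varphi+b\cos\theta\cos\varphi+c\sin\varphi .
\]
By (i), $-a\sin\theta+b\cos\theta=b(\cos\theta-\tan(\theta/2)^{-1}\sin\theta)$, and using $a=b\cot(\theta/2)$ this collapses to $-b$. A cleaner route avoids dividing by $b$: from $a(1-\cos\theta)=b\sin\theta$ one checks directly that $-a\sin\theta+b\cos\theta=-b$. Indeed, multiplying by $\sin\theta$ and using $b\sin\theta=a(1-\cos\theta)$ reduces it to the identity $-a\sin^2\theta+a(1-\cos\theta)\cos\theta=-a(1-\cos\theta)$, and dividing by $\sin\theta\neq0$ is legitimate. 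The same combination appears, with the factor $-\sin\varphi$, in the third line, and yields $c(\sigma(t))=\sin\varphi\,b+\cos\varphi\,c$. This algebraic identity is the only slightly delicate step. I expect no real obstacle beyond taking care with the signs of the signed angles.

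At $\wo$, \cref{prop:fixedpoint}(a) gives $T'(\wo)=\kappa(\wo)N(\wo)$ with $\kappa(\wo)>0$. Differentiating $T\cdot u\equiv a$ at $\wo$ gives $T'(\wo)\cdot u=0$, hence $b(\wo)=0$. Then $c(\wo)^2=1-a^2=\sin^2\vartheta$. Alternatively, $b(\wo)=0$ follows from continuity of $N$ together with $\theta(t)\to0$ in (i).
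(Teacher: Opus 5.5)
Your proposal is correct and follows essentially the same route as the paper. Both arguments dot the rotation formulas~\eqref{eq:rotation} with $u$, use $a^2+b^2+c^2=1$, reduce (iii) to the identity $-a\sin\theta+b\cos\theta=-b$ (equivalent to (i)), and obtain $b(\wo)=0$ from $T'(\wo)\cdot u=0$ together with $T'(\wo)=\kappa(\wo)N(\wo)$, $\kappa(\wo)>0$; your alternative derivation of $b(\wo)=0$ via the continuity of $N$ and $\theta(t)\to0$ is also valid.
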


\begin{proof}
(i) By~\eqref{eq:rotation}, $a=T(\sigma(t))\cdot u=a\cos\theta+b\sin\theta$, so $b=a(1-\cos\theta)/\sin\theta=a\tan(\theta/2)$, and $\tan(\theta/2)=(1-\cos\theta)/\sin\theta=-\delta\beta/(\delta\kappa)$ by~\eqref{eq:invariants}. (ii) follows from $a^2+b^2+c^2=1$, and $c^2\ge0$ gives $\cos^2(\theta/2)\ge\cos^2\vartheta$, i.e.\ $\abs{\theta}/2\le\vartheta$. (iii) Take the scalar product of the second and third lines of~\eqref{eq:rotation} with $u$, and use the identities $-a\sin\theta+b\cos\theta=-b$ and $a\sin\theta-b\cos\theta=b$, which are equivalent to $b(1+\cos\theta)=a\sin\theta$, i.e.\ to (i). At $\wo$, $DT(\wo)=T'(\wo)$ is orthogonal to $u$ (since $T\cdot u$ is constant) and equals $\kappa(\wo)N(\wo)$ with $\kappa(\wo)>0$ by \cref{prop:fixedpoint}(a); hence $b(\wo)=0$.
\end{proof}

\begin{theorem}[intrinsic Lancret theorem]\label{thm:D}
Let $\gamma$ be a Frenet-regular Hahn curve, $\vartheta\in(0,\pi/2)$ and $a:=\cos\vartheta$. The following are equivalent:
\begin{enumerate}[label=\textup{(\roman*)}]
\item $\gamma$ is a generalized Hahn helix with angle $\vartheta$ and some axis $u$ such that $B(t)\cdot u\neq0$ for all $t\in I$;
\item $\abs{\theta(t)}<2\vartheta$ for all $t\in\Is$, and there is $\varepsilon\in\{-1,1\}$ such that for all $t\in\Is$
\begin{equation}\label{eq:H}
  a\Bigl[\tan\frac{\theta(\sigma(t))}{2}+\cos\varphi(t)\tan\frac{\theta(t)}{2}\Bigr]=\varepsilon\sin\varphi(t)\sqrt{1-a^2\sec^2\frac{\theta(t)}{2}} .
\end{equation}
\end{enumerate}
In this case $\varepsilon=\sgn(B\cdot u)$ and $u=a\,T(\wo)+\varepsilon\sin\vartheta\,B(\wo)$.
\end{theorem}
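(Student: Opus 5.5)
The plan is to prove (i)$\Rightarrow$(ii) from \cref{lem:helix}, and (ii)$\Rightarrow$(i) by constructing the axis from the data at $\wo$ and showing that the resulting vector is invariant along orbits.

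\emph{(i)$\Rightarrow$(ii).} Put $b=N\cdot u$ and $c=B\cdot u$. Since $B$ is continuous on $I$ and $c$ never vanishes, $c$ has a constant sign $\varepsilon$ on $I$. By \cref{lem:helix}(ii), $c=\varepsilon\sqrt{1-a^2\sec^2(\theta/2)}$ on $\Is$, and $c\neq0$ forces the strict inequality $\abs{\theta}<2\vartheta$. We then insert $b(t)=a\tan(\theta(t)/2)$ and $b(\sigma(t))=a\tan(\theta(\sigma(t))/2)$ from \cref{lem:helix}(i) into the first relation of \cref{lem:helix}(iii). This gives exactly~\eqref{eq:H}. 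Finally, at $\wo$ we have $b(\wo)=0$ and $c(\wo)=\varepsilon\sin\vartheta$, so $u=a\,T(\wo)+\varepsilon\sin\vartheta\,B(\wo)$.

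\emph{(ii)$\Rightarrow$(i).} Define on $I$ the functions $b:=a\tan(\theta/2)$ and $c:=\varepsilon\sqrt{1-a^2\sec^2(\theta/2)}$, extended by $b(\wo)=0$ and $c(\wo)=\varepsilon\sin\vartheta$. Then set $U(t):=a\,T(t)+b(t)N(t)+c(t)B(t)$, a unit vector. The goal is to show that $U$ is constant; then $u:=U$ is an axis with $T\cdot u=a$ and $B\cdot u=c\neq0$.

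\emph{Step 1: $U(\sigma(t))=U(t)$ on $\Is$.} Write $U(\sigma(t))$ in the frame at $t$ using~\eqref{eq:rotation}. We must check three coefficient identities:
\begin{enumerate}[label=\textup{(\alph*)}]
\item the $T$-component: $a\cos\theta-b(\sigma)\sin\theta\cos\varphi+c(\sigma)\sin\theta\sin\varphi=a$;
\item the $N$-component: $a\sin\theta+\cos\theta\bigl(b(\sigma)\cos\varphi-c(\sigma)\sin\varphi\bigr)=b$;
\item the $B$-component: $b(\sigma)\sin\varphi+c(\sigma)\cos\varphi=c$.
\end{enumerate}
Equivalently, after rotating, we need $b(\sigma)=-\cos\varphi\,b+\sin\varphi\,c$ and $c(\sigma)=\sin\varphi\,b+\cos\varphi\,c$, which are the relations of \cref{lem:helix}(iii), together with $b(1+\cos\theta)=a\sin\theta$. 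The last identity holds by the definition of $b$. The first relation is exactly~\eqref{eq:H}.

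The second relation is the delicate part. We first get it up to sign: $(b,c)$ at $t$ has norm $\sqrt{1-a^2}$ by construction, and the rotation by $\varphi$ preserves norms, so $\sin\varphi\,b+\cos\varphi\,c$ and $c(\sigma)$ have the same square because $b(\sigma)^2+c(\sigma)^2=1-a^2$. To fix the sign, we use continuity. The expression $\sin\varphi\,b+\cos\varphi\,c$ is continuous in $t$ on each component of $\Is$, and it never vanishes, since its square equals $c(\sigma)^2>0$ by $\abs{\theta}<2\vartheta$. As $t\to\wo$ it tends to $\varepsilon\sin\vartheta$, because $\theta,\varphi\to0$ (\cref{prop:fixedpoint}(a) for $\theta$; for $\varphi$, $\cos\varphi=B\cdot B\circ\sigma\to1$). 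Hence it has sign $\varepsilon$ throughout, and so equals $c(\sigma)$. This sign argument is the main obstacle; it is where both the strictness of $\abs{\theta}<2\vartheta$ and the continuity of the frame are essential.

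\emph{Step 2: $U$ is constant.} $U$ is continuous at $\wo$ because $T,N,B$ are continuous and $b(t)\to0$, $c(t)\to\varepsilon\sin\vartheta$ as $\theta\to0$. By Step 1 and \cref{lem:JN}(c), $U$ is constant on $I$. This also yields $u=a\,T(\wo)+\varepsilon\sin\vartheta\,B(\wo)$ and $\varepsilon=\sgn(B\cdot u)$.
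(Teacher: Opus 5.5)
Your proposal is correct and follows the paper's proof essentially step for step. For (i)$\Rightarrow$(ii) you use \cref{lem:helix}. For (ii)$\Rightarrow$(i) you build $aT+bN+cB$, get the $N$-relation from~\eqref{eq:H} and the $B$-relation up to sign from $b^2+c^2=\sin^2\vartheta$, fix the sign by continuity on each component of $\Is$ together with the limit $\varepsilon\sin\vartheta$ at $\wo$, and conclude with \cref{lem:JN}(c). The only differences are cosmetic: you expand the vector at $\sigma(t)$ in the frame at $t$, whereas the paper expands it at $t$ in the frame at $\sigma(t)$, and the paper spells out the continuity of $b,c$ on $\Is$ that you use tacitly, which follows from the continuity of $\cos\theta,\sin\theta$ with $\sin\theta\neq0$.
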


\begin{proof}
(i)$\Rightarrow$(ii) By \cref{lem:helix}(ii), $c^2=1-a^2\sec^2(\theta/2)$, and $c=B\cdot u\neq0$, so $\abs{\theta}<2\vartheta$. Since $c$ is continuous on $I$ and does not vanish, $c=\varepsilon\sqrt{1-a^2\sec^2(\theta/2)}$ with the constant sign $\varepsilon=\sgn c$. Inserting \cref{lem:helix}(i) into the first relation of \cref{lem:helix}(iii) gives~\eqref{eq:H}. The formula for $u$ follows from $u=a\,T+b\,N+c\,B$ at $\wo$, where $b(\wo)=0$ and $c(\wo)=\varepsilon\sin\vartheta$.

(ii)$\Rightarrow$(i) On $\Is$ put $b:=a\tan(\theta/2)$ and $c:=\varepsilon\sqrt{1-a^2\sec^2(\theta/2)}$ (note $c\neq0$ because $\abs{\theta}<2\vartheta$), set $b(\wo):=0$, $c(\wo):=\varepsilon\sin\vartheta$, and $W:=aT+bN+cB$. Since $\cos\theta$ and $\sin\theta$ are continuous on $\Is$ by~\eqref{eq:angles} and $\sin\theta\neq0$, the function $\theta$ is continuous on $\Is$; moreover $\theta(t)\to0$ as $t\to\wo$, because $\cos\theta(t)\to1$. Hence $b$, $c$ and $W$ are continuous on $I$, and $\abs{W}=1$. For $t\in\Is$, \eqref{eq:rotation} and the identities in the proof of \cref{lem:helix} give
\begin{gather*}
  T(\sigma(t))\cdot W(t)=a,\quad
  N(\sigma(t))\cdot W(t)=-b\cos\varphi+c\sin\varphi=:\hat b(t),\quad
  B(\sigma(t))\cdot W(t)=b\sin\varphi+c\cos\varphi=:\hat c(t).
\end{gather*}
By~\eqref{eq:H}, $\hat b(t)=b(\sigma(t))$. Since $a^2+\hat b^2+\hat c^2=1=a^2+b(\sigma(t))^2+c(\sigma(t))^2$, we have $\hat c(t)^2=c(\sigma(t))^2>0$. The function $\hat c$ is continuous on each component of $\Is$, does not vanish, and tends to $\varepsilon\sin\vartheta$ as $t\to\wo$ (because $b\to0$, $c\to\varepsilon\sin\vartheta$ and $\varphi\to0$). Hence $\sgn\hat c=\varepsilon=\sgn c(\sigma(t))$, so $\hat c(t)=c(\sigma(t))$ and therefore $W(t)=aT(\sigma(t))+b(\sigma(t))N(\sigma(t))+c(\sigma(t))B(\sigma(t))=W(\sigma(t))$. By \cref{lem:JN}(c), $W$ is constant; with $u:=W(\wo)$ we get $T\cdot u\equiv a$ and $B\cdot u=c\neq0$.
\end{proof}

Relation~\eqref{eq:H} involves only $\theta(t)$, $\theta(\sigma(t))$ and $\varphi(t)$, i.e.\ two consecutive steps of each orbit polygon, and no reference to the axis. Since $\tan(\theta/2)=-\beta/\kappa$ by~\eqref{eq:invariants}, the turning angles enter only through the ratio $\beta/\kappa$; under condition~\eqref{eq:bound}, \cref{cor:relations}(d) moreover expresses $\beta$, $\sin\varphi$ and $\cos\varphi$ through $\kappa$, $\tau$ and $\delta$, so that~\eqref{eq:H} becomes a relation between the Hahn curvature and torsion at $t$ and $\sigma(t)$.

The condition $B\cdot u\neq0$ in \cref{thm:D}(i) excludes the degenerate case in which $\abs{\theta(t)}=2\vartheta$ at some point (\cref{lem:helix}(ii)). It holds automatically near $\wo$, where $\theta\to0$. If $B\cdot u$ has zeros, \cref{lem:helix} still applies, and~\eqref{eq:H} holds with $\varepsilon$ replaced by $\sgn(B(t)\cdot u)$, which may change sign at the zeros of $B\cdot u$; we do not pursue this case.

\begin{corollary}\label{cor:allhelices}
Let $\vartheta\in(0,\pi/2)$, $a=\cos\vartheta$, $\varepsilon\in\{-1,1\}$, and let $\theta\colon I\to\R$ be continuous with $\theta(\wo)=0$, $0<\sgn(\delta(t))\,\theta(t)<2\vartheta$ for $t\in\Is$, and $\lim_{t\to\wo}\theta(t)/\delta(t)=\kappa_0>0$. Define $b$ and $c$ on $I$ as in the proof of \cref{thm:D}, and $\varphi\colon I\to(-\pi,\pi)$ by $\varphi(\wo):=0$ and, for $t\in\Is$,
\begin{equation}\label{eq:phihelix}
  \sin\varphi(t)=\frac{c(t)\,b(\sigma(t))+b(t)\,c(\sigma(t))}{\sin^2\vartheta},\qquad
  \cos\varphi(t)=\frac{c(t)\,c(\sigma(t))-b(t)\,b(\sigma(t))}{\sin^2\vartheta}.
\end{equation}
Then $\varphi$ is continuous, $\varphi(t)/\delta(t)\to\varepsilon\,\frac{\qn{2}}{2}\,\kappa_0\cot\vartheta$ as $t\to\wo$, and the curve given by \cref{thm:Bangle} with angles $(\theta,\varphi)$ is a generalized Hahn helix with angle $\vartheta$. Conversely, the torsion angle of every generalized Hahn helix as in \cref{thm:D}\textup{(i)} is given by~\eqref{eq:phihelix}. Thus, up to proper rigid motions, unit-speed generalized Hahn helices with angle $\vartheta$ and sign $\varepsilon$ correspond bijectively to their turning-angle functions; a helix with prescribed Hahn speed is then obtained as in \cref{rem:speed}(i).
\end{corollary}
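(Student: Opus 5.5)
The plan is to read \eqref{eq:phihelix} as a rotation identity and then reduce everything to \cref{thm:Bangle} and \cref{thm:D}. On $\Is$ the pairs $(b(t),c(t))$ and $(b(\sigma(t)),c(\sigma(t)))$ both lie on the circle of radius $\sin\vartheta$: indeed $b^2+c^2=a^2\tan^2(\theta/2)+1-a^2\sec^2(\theta/2)=1-a^2=\sin^2\vartheta$. Hence \eqref{eq:phihelix} defines a unique $\varphi(t)$ modulo $2\pi$, namely the angle with
\[
\bigl(b(\sigma(t)),c(\sigma(t))\bigr)=\bigl(-\cos\varphi\,b+\sin\varphi\,c,\ \sin\varphi\,b+\cos\varphi\,c\bigr).
\]
This is exactly the relation of \cref{lem:helix}(iii). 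I would check that the right-hand side of \eqref{eq:phihelix} really is of this form, i.e.\ that the matrix $\begin{pmatrix}-\cos\varphi&\sin\varphi\\ \sin\varphi&\cos\varphi\end{pmatrix}$ sends $(b,c)$ to $(b\circ\sigma,c\circ\sigma)$. This is a reflection, so I would solve for $\sin\varphi,\cos\varphi$ by the standard $2\times2$ computation, and this is what gives \eqref{eq:phihelix}.

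The steps are the following.

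\emph{Step 1 (continuity).} Since $\theta$ is continuous with $\abs{\theta}<2\vartheta$ on $\Is$, the functions $b$ and $c$ are continuous on $\Is$, with $c$ of constant sign $\varepsilon$. They are also continuous at $\wo$, because $\theta\to0$ gives $b\to0$ and $c\to\varepsilon\sin\vartheta$. Thus the right-hand sides of \eqref{eq:phihelix} are continuous on $\Is$ and tend to $(0,1)$ as $t\to\wo$. Near $\wo$, $\cos\varphi>0$, so $\varphi=\arcsin(\cdot)$ is continuous there. On each component of $\Is$ I take the continuous lift into $(-\pi,\pi)$. I must make sure the value $\pi$ is never attained; if that fails, the lift is allowed by \cref{thm:Bangle}, which only needs $\varphi$ modulo $2\pi$. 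This step is where I expect some care.

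\emph{Step 2 (the limit, the main computation).} Write $b(t)=a\theta(t)/2+O(\theta^3)$ and $\theta(t)=\kappa_0\delta(t)+o(\delta)$, using $\theta(\sigma(t))/\delta(t)=q\,\theta(\sigma(t))/\delta(\sigma(t))\to q\kappa_0$. Then
\[
\frac{\sin\varphi(t)}{\delta(t)}=\frac{c(t)\,b(\sigma(t))/\delta+b(t)\,c(\sigma(t))/\delta}{\sin^2\vartheta}\to\frac{\varepsilon\sin\vartheta\,a\kappa_0(q+1)/2}{\sin^2\vartheta}=\varepsilon\frac{\qn{2}}{2}\kappa_0\cot\vartheta .
\]
Since $\varphi\to0$, the same limit holds for $\varphi/\delta$.

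\emph{Step 3 (the helix).} With Steps 1 and 2 in hand, the hypotheses of \cref{thm:Bangle} hold. Let $\gamma$ be the curve it produces, with turning angle $\theta$ and torsion angle $\equiv\varphi \pmod{2\pi}$. By construction $b(\sigma(t))=-\cos\varphi\,b+\sin\varphi\,c$ with $b=a\tan(\theta/2)$ and $c=\varepsilon\sqrt{1-a^2\sec^2(\theta/2)}$. Substituting gives precisely \eqref{eq:H}. Together with $\abs{\theta}<2\vartheta$, \cref{thm:D} ((ii)$\Rightarrow$(i)) shows that $\gamma$ is a generalized Hahn helix with angle $\vartheta$.

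\emph{Step 4 (converse and bijection).} For the converse, take a helix as in \cref{thm:D}(i). By \cref{lem:helix}, $b=N\cdot u$ and $c=B\cdot u$ are given by the stated formulas, with $\varepsilon=\sgn c$. \Cref{lem:helix}(iii) is the reflection relation above, and since $(b,c)\neq0$, it forces \eqref{eq:phihelix}.

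For the bijection, the turning angle of any such helix satisfies the hypotheses imposed on $\theta$: use \cref{thm:A} for the sign, \cref{lem:helix}(ii) and \cref{thm:D} for the bound $\abs{\theta}<2\vartheta$, and \cref{prop:fixedpoint}(a) for the limit. The torsion angle is then determined by $\theta$, and the uniqueness in \cref{thm:Bangle} yields injectivity up to proper rigid motions. Surjectivity onto admissible $\theta$ is Step 3.
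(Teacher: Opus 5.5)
Your plan follows the paper's proof essentially step for step. The paper also reads \eqref{eq:phihelix} as the plane rotation taking $(b,c)$ to $(-b\circ\sigma,\,c\circ\sigma)$; your reflection is that rotation followed by $(x,y)\mapsto(-x,y)$. It then computes the limit from $b(t)/\delta(t)\to a\kappa_0/2$ and $b(\sigma(t))/\delta(t)\to qa\kappa_0/2$, and uses \cref{thm:Bangle}, \cref{thm:D}, \cref{lem:helix}(iii) and the uniqueness in \cref{thm:Bangle} exactly as you do.

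The one point you leave open is in Step 1, and your fallback does not cover it. The statement asserts that $\varphi$ takes values in $(-\pi,\pi)$ and is continuous. If $\cos\varphi(t)=-1$ could occur, this $\varphi$ would not be well defined, since its value would have to be $\pi$. Passing to a continuous lift proves the helix part, but it says nothing about the function $\varphi$ in the statement.

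You already have the ingredient that closes this: $c(t)$ and $c(\sigma(t))$ are both non-zero with the same sign $\varepsilon$. If $\cos\varphi(t)=-1$, then $\sin\varphi(t)=0$, and the relation $c(\sigma(t))=b(t)\sin\varphi(t)+c(t)\cos\varphi(t)$ gives $c(\sigma(t))=-c(t)$, which is impossible. Equivalently, by Cauchy--Schwarz, $c\,c(\sigma)-b\,b(\sigma)\ge-\sin^2\vartheta$, with equality only if $c(\sigma)=-c$. Hence $t\mapsto(\cos\varphi(t),\sin\varphi(t))$ maps $\Is$ continuously into the unit circle minus $(-1,0)$, where the principal angle is continuous. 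So $\varphi$ is continuous on $\Is$, and continuity at $\wo$ follows from $(\cos\varphi,\sin\varphi)\to(1,0)$.

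A smaller omission is in the bijection step. You should also record that the turning angle of a given helix is continuous on $\Is$. This is shown in the proof of \cref{thm:D}, from the continuity of $T$ and $N$ together with $\sin\theta\neq0$.
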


\begin{proof}
Since $b^2+c^2=\sin^2\vartheta$ on $I$, \eqref{eq:phihelix} defines $\varphi(t)$ as the angle of the plane rotation that maps $(b(t),c(t))$ onto $(-b(\sigma(t)),c(\sigma(t)))$; a direct computation gives
\begin{equation}\label{eq:rotbc}
  b(\sigma(t))=-b(t)\cos\varphi(t)+c(t)\sin\varphi(t),\qquad c(\sigma(t))=b(t)\sin\varphi(t)+c(t)\cos\varphi(t).
\end{equation}
Because $c(t)$ and $c(\sigma(t))$ have the same sign $\varepsilon$, the two vectors are never opposite, so $\cos\varphi\neq-1$ and $\varphi$ is continuous on $\Is$. As $t\to\wo$, $b(t),b(\sigma(t))\to0$ and $c(t),c(\sigma(t))\to\varepsilon\sin\vartheta$, so $\varphi\to0$. Furthermore $b(t)/\delta(t)\to a\kappa_0/2$ and $b(\sigma(t))/\delta(t)=q\,b(\sigma(t))/\delta(\sigma(t))\to qa\kappa_0/2$, so that $\sin\varphi/\delta\to\varepsilon\sin\vartheta(1+q)a\kappa_0/(2\sin^2\vartheta)=\varepsilon\frac{\qn{2}}{2}\kappa_0\cot\vartheta$; the same limit holds for $\varphi/\delta$. Thus $(\theta,\varphi)$ satisfy the hypotheses of \cref{thm:Bangle}. For the resulting curve the first relation in~\eqref{eq:rotbc} is~\eqref{eq:H}, and $\abs{\theta}<2\vartheta$; so \cref{thm:D} shows that it is a generalized Hahn helix with angle $\vartheta$. Conversely, for a generalized Hahn helix as in \cref{thm:D}\textup{(i)}, \cref{lem:helix}(iii) is~\eqref{eq:rotbc}, which determines the rotation angle $\varphi(t)$ uniquely; moreover its turning angle satisfies the hypotheses of the corollary, with $\kappa_0=\kappa(\wo)$, by \cref{thm:A,thm:D} and \cref{prop:fixedpoint}(a). The last statement follows from the uniqueness part of \cref{thm:Bangle}.
\end{proof}

\begin{corollary}\label{cor:lancretratio}
Let $\gamma$ be as in \cref{thm:D}\textup{(i)}. Then $B$ is differentiable at $\wo$, and, with $\varepsilon=\sgn(B\cdot u)$,
\[
  \frac{\tau(\wo)}{\kappa(\wo)}=\varepsilon\,\frac{\qn{2}}{2}\cot\vartheta,\qquad
  \lim_{t\to\wo}\frac{\hat\tau(t)}{\hat\kappa(t)}=\varepsilon\,\frac{3\qn{2}^2}{4\qn{3}}\cot\vartheta .
\]
In particular, for $q<1$ the classical Lancret relation $\tau/\kappa=\varepsilon\cot\vartheta$ fails at the fixed point, also for the edge-normalized invariants~\eqref{eq:edgeinv}, since $4\qn{3}-3\qn{2}^2=(1-q)^2$.
\end{corollary}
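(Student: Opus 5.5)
Two things have to be shown: that $B$ is differentiable at $\wo$, and what the two limiting ratios are. For both I would use the helix data of \cref{lem:helix}, namely $b=N\cdot u=a\tan(\theta/2)$ and $c=B\cdot u$, together with the axis formula of \cref{thm:D}. Write $\varepsilon=\sgn c$ and $\kappa_0=\kappa(\wo)$. By \cref{prop:fixedpoint}(a) we have $\theta/\delta\to\kappa_0$, so $b(t)/\delta(t)\to a\kappa_0/2$.

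\textbf{Differentiability of $B$.} I would not work with $B$ directly but exploit the identity $u=aT+bN+cB$. Solving it for $B$ and substituting $N=B\times T$ gives
\[ cB+b\,B\times T=u-aT. \]
For fixed $T$, the map $X\mapsto cX+b\,X\times T$ acts on the plane $T^{\perp}$ as $c$ times the identity plus $b$ times a rotation by $\pi/2$. Its inverse is therefore explicit and equals $(cX-bX\times T)/(b^2+c^2)$, with $b^2+c^2=\sin^2\vartheta$. This yields
\[ B=\bigl(c(u-aT)-b\,(u-aT)\times T\bigr)/\sin^2\vartheta. \]
The vectors $T$, $b$ and $c$ are the ingredients. $T$ is differentiable at $\wo$ by \ref{F1}. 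Next, $b=a\tan(\theta/2)$ has derivative $a\kappa_0/2$ at $\wo$, since $b(\wo)=0$ and $b/(t-\wo)=-(1-q)b/\delta$. Finally $c=\varepsilon\sqrt{\sin^2\vartheta-b^2}$ is differentiable at $\wo$ with $c'(\wo)=0$, because $b^2=O((t-\wo)^2)$. Hence $B$ is differentiable at $\wo$.

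\textbf{Hahn ratio.} With $B$ differentiable, \cref{prop:fixedpoint}(b) gives $\tau(\wo)/\kappa(\wo)=\lim\varphi/\theta$. I would compute this limit from the torsion-angle formula \eqref{eq:phihelix}, which holds for every helix by \cref{cor:allhelices}. That corollary already gives $\varphi/\delta\to\varepsilon\frac{\qn{2}}{2}\kappa_0\cot\vartheta$, so dividing by $\theta/\delta\to\kappa_0$ yields the first formula.

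\textbf{Edge-normalized ratio.} From \eqref{eq:edgeid} I get
\[ \hat\tau/\hat\kappa=\frac{3\sgn(\delta)\sin\varphi\,(\ell+\ell\circ\sigma)}{2\abs{\sin\theta}\,(\ell+\ell\circ\sigma+\ell\circ\sigma^2)}. \]
Here $\ell(\sigma^k(t))=q^k\abs{\delta}s(\sigma^k(t))$, and $s$ is continuous at $\wo$ by \cref{lem:continuityDf} applied to $\gamma$, which is differentiable at $\wo$. So the quotient of the edge sums tends to $\qn{2}/\qn{3}$. Since $\sgn\theta=\sgn\delta$, the quotient $\sgn(\delta)\sin\varphi/\abs{\sin\theta}$ equals $\sin\varphi/\sin\theta\to\varepsilon\frac{\qn{2}}{2}\cot\vartheta$. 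Multiplying, the limit is $\frac32\cdot\frac{\qn{2}}{\qn{3}}\cdot\varepsilon\frac{\qn{2}}{2}\cot\vartheta=\varepsilon\frac{3\qn{2}^2}{4\qn{3}}\cot\vartheta$, as claimed. The final identity is a direct expansion: $4(1+q+q^2)-3(1+q)^2=1-2q+q^2=(1-q)^2$.

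\textbf{Main obstacle.} The only delicate step is differentiability of $B$ at $\wo$, specifically inverting the relation for $B$ cleanly and checking that $c$ has a derivative at $\wo$. The rest consists of limits already established.
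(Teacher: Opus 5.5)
Your proof is correct. The differentiability half is the paper's own argument. The paper uses the same formula $B=\bigl(c(u-aT)+b\,T\times u\bigr)/\sin^2\vartheta$; your $-b\,(u-aT)\times T$ equals $b\,T\times u$. The paper merely verifies this formula from $T\times u=bB-cN$, whereas you obtain it by inverting $X\mapsto cX+b\,X\times T$ on $T^\perp$. It then uses the same facts $b(\wo)=0$ and $c'(\wo)=0$.

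You diverge in how you get the value of $\tau(\wo)$. The paper differentiates the formula for $B$ at $\wo$, obtaining $B'(\wo)=-\varepsilon\,\frac{(1+q)a\kappa(\wo)}{2\sin\vartheta}\,N(\wo)$, and reads off $\tau(\wo)=-B'(\wo)\cdot N(\wo)$. You use the formula only qualitatively. You then take the value from \cref{prop:fixedpoint}(b), $\tau(\wo)=\lim\varphi/\delta$, combined with the limit of $\varphi/\delta$ already computed from \eqref{eq:phihelix} in \cref{cor:allhelices}. This is legitimate: that corollary precedes this one and does not depend on it. Its converse part covers every helix as in \cref{thm:D}\textup{(i)}, with $\kappa_0=\kappa(\wo)$.

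Your route reuses an existing computation and never needs the exact value of $b'(\wo)$. The paper's route is self-contained given \cref{lem:helix} and produces $B'(\wo)$ explicitly. For the edge-normalized ratio you do the same as the paper, working with $\sin\varphi/\sin\theta$ where the paper writes $\tau/(\kappa\cos\theta)$.

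One slip: $b'(\wo)$ is not $a\kappa_0/2$. Your own identity $b/(t-\wo)=-(1-q)\,b/\delta$, together with $b/\delta\to a\kappa_0/2$, gives $b'(\wo)=-(1-q)a\kappa_0/2$. This is harmless in your argument, since you only need that $b$ is differentiable at $\wo$. Had you used the paper's route, however, inserting your value into $B'(\wo)=-\varepsilon\sin\vartheta\,\bigl(a\kappa_0+b'(\wo)\bigr)N(\wo)/\sin^2\vartheta$ would have produced the wrong ratio $\varepsilon\frac32\cot\vartheta$ instead of $\varepsilon\frac{\qn{2}}{2}\cot\vartheta$.
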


\begin{proof}
Put $b:=N\cdot u$ and $c:=B\cdot u$. Since $u=aT+bN+cB$, $T\times u=bB-cN$ and $b^2+c^2=\sin^2\vartheta$, we have
\[
  B=\frac{c\,(u-aT)+b\,T\times u}{\sin^2\vartheta}\qquad\text{on }I.
\]
By \cref{lem:helix} and the proof of \cref{thm:D}, $b=a\tan(\theta/2)$ and $c=\varepsilon(1-a^2\sec^2(\theta/2))^{1/2}$ on $\Is$, while $b(\wo)=0$ and $c(\wo)=\varepsilon\sin\vartheta$. As $\theta(t)/\delta(t)\to\kappa(\wo)$ by \cref{prop:fixedpoint}(a) and $\delta(t)=-(1-q)(t-\wo)$, the function $b$ is differentiable at $\wo$ with $b'(\wo)=-(1-q)a\kappa(\wo)/2$; moreover $c(t)-c(\wo)=O(\theta(t)^2)$, so that $c'(\wo)=0$. Since $T$ is differentiable at $\wo$ with $T'(\wo)=\kappa(\wo)N(\wo)$, and $(T\times u)(\wo)=-\varepsilon\sin\vartheta\,N(\wo)$, it follows that $B$ is differentiable at $\wo$ with
\[
  B'(\wo)=-\frac{\varepsilon\sin\vartheta\,\bigl(a\kappa(\wo)+b'(\wo)\bigr)}{\sin^2\vartheta}\,N(\wo)=-\varepsilon\,\frac{(1+q)\,a\,\kappa(\wo)}{2\sin\vartheta}\,N(\wo),
\]
and $\tau(\wo)=-B'(\wo)\cdot N(\wo)$ gives the first formula. By~\eqref{eq:edgeid} and $\ell=\abs{\delta}s$,
\begin{equation*}
  \frac{\hat\tau}{\hat\kappa}=\frac32\,\frac{\tau}{\kappa}\,\frac{s+q\,s\circ\sigma}{\cos\theta\,\bigl(s+q\,s\circ\sigma+q^2\,s\circ\sigma^2\bigr)}\qquad
  \text{on $\Is$ wherever $\cos\theta\neq0$, in particular near $\wo$;}
\end{equation*}
since $s$ is continuous at $\wo$ (\cref{lem:continuityDf}), $\cos\theta\to1$ and $\tau/\kappa\to\tau(\wo)/\kappa(\wo)$ (\cref{prop:fixedpoint}), the right-hand side tends to $\frac{3\qn{2}}{2\qn{3}}\,\tau(\wo)/\kappa(\wo)$.
\end{proof}

\begin{remark}\label{rem:helixremarks}
(a) If a generalized Hahn helix $\gamma$ is unit-speed and its Hahn tangent field $T$ is smooth, then the classical curve $\Gamma$ with $\Gamma'=T$ is a classical general helix, so $\tau_\Gamma/\kappa_\Gamma=\varepsilon\cot\vartheta$ by Lancret's theorem, and \cref{prop:tangentfield} gives $\kappa(\wo)=\kappa_\Gamma(\wo)$ and $\tau(\wo)=\frac{\qn{2}}{2}\tau_\Gamma(\wo)$, in accordance with \cref{cor:lancretratio}. The orbit polygons, however, are inscribed in $\gamma$ itself, which is smooth with $\gamma'(\wo)=T(\wo)$ and $\gamma''(\wo)=\frac{2}{\qn{2}}T'(\wo)$ (termwise differentiation of~\eqref{eq:JN}), and by \cref{cor:edge,cor:lancretratio} the classical curvature and torsion of $\gamma$ satisfy $\tau_{\cl}(\wo)/\kappa_{\cl}(\wo)=\varepsilon\frac{3\qn{2}^2}{4\qn{3}}\cot\vartheta$. Thus the edges of the orbit polygons make the constant angle $\vartheta$ with $u$, but for $q<1$ the curve $\gamma$ is not a classical general helix with angle $\vartheta$.
(b) For smooth data, dividing~\eqref{eq:H} by $\delta$ and letting $q\to1$ with $\wo$ fixed yields the classical Lancret relation $\tau/\kappa=\varepsilon\cot\vartheta$; for uniform steps, the second-order consistency of the polygonal Lancret quotient is proved in~\cite[Proposition~5.5]{YoonLeeLee2026}.
(c) Away from $\wo$ the ratio $\tau/\kappa$ of a generalized Hahn helix is in general not constant; see \cref{ex:hahnhelix}.
\end{remark}

By \cref{rem:helixremarks}(c), the ratio $\tau/\kappa$ of a generalized Hahn helix is in general not constant. Conversely, a constant ratio does not characterize generalized Hahn helices. To describe the helices with constant ratio, fix $\vartheta\in(0,\pi/2)$ and $\varepsilon\in\{-1,1\}$, and put $a:=\cos\vartheta$ and $\rho:=\varepsilon\frac{\qn{2}}{2}\cot\vartheta$. For $\abs{x}<x_\vartheta:=\min\{2\vartheta,\arctan(1/\abs{\rho})\}$ define $f_\vartheta(x)\in(-\pi,\pi)$ by
\begin{equation}\label{eq:Fc}
  a\tan\frac{f_\vartheta(x)}{2}=\frac{\qn{2}}{2}\cot\vartheta\,\tan x\,\sqrt{1-a^2\sec^2\tfrac{x}{2}}-a\sqrt{1-\rho^2\tan^2x}\,\tan\frac{x}{2};
\end{equation}
here $\frac{\qn{2}}{2}\cot\vartheta=\varepsilon\rho$, and $f_\vartheta$ does not depend on $\varepsilon$, since $\rho$ enters~\eqref{eq:Fc} and $x_\vartheta$ only through $\rho^2$ and $\abs{\rho}$. For a Frenet-regular curve, $\abs{\theta}<\pi$ on $\Is$ by \cref{thm:A}; hence, if $\abs{\theta(t)}<x_\vartheta$, $\sin\varphi(t)=\rho\tan\theta(t)$ and $\cos\varphi(t)>0$, then~\eqref{eq:H} with the sign $\varepsilon$ is equivalent to $\theta(\sigma(t))=f_\vartheta(\theta(t))$. The function $f_\vartheta$ is real-analytic, with $f_\vartheta(0)=0$ and $f_\vartheta'(0)=2\varepsilon\rho\tan\vartheta-1=q$. Fix an open interval $U_\vartheta\ni0$ contained in $(-x_\vartheta,x_\vartheta)$ on which $f_\vartheta'>0$ and $0<f_\vartheta(x)/x<1$ for $x\neq0$; such intervals exist because $0<f_\vartheta'(0)<1$, and then $f_\vartheta(U_\vartheta)\subset U_\vartheta$. ($U_\vartheta$ cannot always be taken to be $(-x_\vartheta,x_\vartheta)$, since $f_\vartheta$ may have further fixed points there.) By the proof of \cref{prop:coordfree}, applied to $\Sigma:=f_\vartheta|_{U_\vartheta}$, the Koenigs function $\mathcal K_\vartheta:=\lim_{n\to\infty}q^{-n}f_\vartheta^n$ is an increasing $C^1$-diffeomorphism of $U_\vartheta$ onto an interval $\mathcal K_\vartheta(U_\vartheta)\ni0$, with $\mathcal K_\vartheta\circ f_\vartheta=q\,\mathcal K_\vartheta$ and $\mathcal K_\vartheta'(0)=1$~\cite{Koenigs1884,KuczmaChoczewskiGer1990}.

\begin{proposition}\label{prop:constratio}
Let $\vartheta$, $\varepsilon$, $a$, $\rho$, $f_\vartheta$, $\mathcal K_\vartheta$ and $U_\vartheta$ be as above.
\begin{enumerate}[label=\textup{(\alph*)}]
\item Let $\gamma$ be a generalized Hahn helix with angle $\vartheta$ and axis $u$ such that $\sgn(B(\wo)\cdot u)=\varepsilon$ and $\tau/\kappa$ is constant on $\Is$. Then $\tau/\kappa\equiv\rho$, and there is a $\sigma$-invariant neighbourhood $I_0$ of $\wo$ in $I$ with $\kappa(\wo)\,\delta(I_0)\subset\mathcal K_\vartheta(U_\vartheta)$ such that $\theta=\mathcal K_\vartheta^{-1}\circ(\kappa(\wo)\,\delta)$ and $\varphi=\arcsin(\rho\tan\theta)$ on $I_0\setminus\{\wo\}$. In particular, if $\gamma$ is unit-speed, then $\gamma|_{I_0}$ is, up to a proper rigid motion, the helix of \textup{(b)} with $\kappa_0=\kappa(\wo)$ and $I$ replaced by $I_0$.
\item Conversely, let $\kappa_0>0$, and let $I\ni\wo$ be an interval with $\kappa_0\,\delta(I)\subset\mathcal K_\vartheta(U_\vartheta)$. Then $\theta:=\mathcal K_\vartheta^{-1}\circ(\kappa_0\delta)$ and $\varphi:=\arcsin(\rho\tan\theta)$ satisfy the hypotheses of \cref{thm:Bangle}, and the corresponding unit-speed Hahn curve $\gamma$ is a generalized Hahn helix with angle $\vartheta$ and an axis $u$ such that $\sgn(B\cdot u)=\varepsilon$; moreover $\kappa(\wo)=\kappa_0$ and $\tau/\kappa\equiv\rho$. On $h^{-1}(I)$, where $h(t):=\wo+(\tilde\kappa_0/\kappa_0)(t-\wo)$, the helix obtained from another value $\tilde\kappa_0>0$ is, up to a proper rigid motion, $(\kappa_0/\tilde\kappa_0)\,\gamma\circ h$.
\item A Frenet-regular Hahn curve with constant Hahn curvature and constant Hahn torsion on $\Is$ is not a generalized Hahn helix, with any angle in $(0,\pi/2)$, on any $\sigma$-invariant neighbourhood of $\wo$.
\end{enumerate}
\end{proposition}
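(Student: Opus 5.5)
For (a), I would start from the fact that $\tau/\kappa=\cos\theta\sin\varphi/\sin\theta$ on $\Is$ by~\eqref{eq:invariants}. If this is constant, its limit at $\wo$ is $\lim\varphi/\theta$, because $\theta,\varphi\to0$; here \cref{cor:lancretratio} guarantees that $B$ is differentiable at $\wo$, so \cref{prop:fixedpoint}(b) applies. Hence the constant equals $\tau(\wo)/\kappa(\wo)=\rho$, again by \cref{cor:lancretratio}, and we get $\sin\varphi=\rho\tan\theta$ on $\Is$. Next I choose a $\sigma$-invariant neighbourhood $I_0$ on which $\abs{\theta}$ is small and $\cos\varphi>0$; this is possible since $\theta,\varphi\to0$. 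On $I_0\setminus\{\wo\}$ the equivalence stated before the proposition turns~\eqref{eq:H} into $\theta(\sigma(t))=f_\vartheta(\theta(t))$, so $\varphi=\arcsin(\rho\tan\theta)$ there.

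The key step is to identify $\theta$ through the Koenigs function. I set $g:=\mathcal K_\vartheta\circ\theta$, shrinking $I_0$ so that $\theta(I_0)\subset U_\vartheta$. Then $g(\sigma(t))=q\,g(t)$, so $G:=g/\delta$ satisfies $G\circ\sigma=G$, because $\delta\circ\sigma=q\delta$. Moreover $G$ tends to $\kappa(\wo)$ at $\wo$, since $\mathcal K_\vartheta'(0)=1$ and $\theta/\delta\to\kappa(\wo)$ by \cref{prop:fixedpoint}(a). The argument of \cref{lem:JN}(c) then gives $G\equiv\kappa(\wo)$, which yields both $\theta=\mathcal K_\vartheta^{-1}\circ(\kappa(\wo)\delta)$ and the inclusion $\kappa(\wo)\delta(I_0)\subset\mathcal K_\vartheta(U_\vartheta)$. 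The unit-speed statement then follows from the uniqueness part of \cref{thm:Bangle}, since the angles coincide.

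For (b), I would check the hypotheses of \cref{thm:Bangle} for the prescribed angles. Continuity and $\theta(\wo)=0$ are immediate. We have $\theta/\delta\to\kappa_0$ since $(\mathcal K_\vartheta^{-1})'(0)=1$. The sign condition holds because $\mathcal K_\vartheta$ is increasing with $\mathcal K_\vartheta(0)=0$, and $\abs{\theta}<x_\vartheta\le 2\vartheta<\pi$. The function $\varphi$ is well defined because $\abs{\rho\tan\theta}<1$, and $\varphi/\delta\to\rho\kappa_0$. Then $\theta(\sigma(t))=\mathcal K_\vartheta^{-1}(q\kappa_0\delta)=f_\vartheta(\theta(t))$, and with $\cos\varphi>0$ this gives~\eqref{eq:H} with sign $\varepsilon$. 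Since also $\abs{\theta}<2\vartheta$, \cref{thm:D} shows that the curve is a helix with $\sgn(B\cdot u)=\varepsilon$. Finally, $\tau/\kappa=\cos\theta\sin\varphi/\sin\theta=\rho$, and $\kappa(\wo)=\kappa_0$ by \cref{thm:Bangle}. For the scaling claim, \cref{prop:reparam} shows that $\gamma\circ h$ has angles $\theta\circ h$ and $\varphi\circ h$, which are exactly the angles built from $\tilde\kappa_0$ because $\delta\circ h=(\tilde\kappa_0/\kappa_0)\delta$. Its speed is $\tilde\kappa_0/\kappa_0$, so multiplying by $\kappa_0/\tilde\kappa_0$ restores unit speed without changing the angles, and uniqueness in \cref{thm:Bangle} concludes.

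For (c), suppose $\kappa$ and $\tau$ are constant and the curve is a helix near $\wo$. By (a), $\theta=\mathcal K_\vartheta^{-1}(\kappa(\wo)\delta)$ on some $I_0$. But $\sin\theta=\delta\kappa$ with $\kappa$ constant, so $\theta=\arcsin(\kappa\delta)$ near $\wo$. This forces $\mathcal K_\vartheta^{-1}(y)=\arcsin y$ on an interval, up to the same constant scaling, which means $f_\vartheta$ would be conjugate to $y\mapsto qy$ by $\arcsin$. Equivalently, $\sin f_\vartheta(x)=q\sin x$ near $0$. The step I expect to be the main obstacle is refuting this identity. I would compare third-order Taylor coefficients of $f_\vartheta$ computed from~\eqref{eq:Fc} with those of $\arcsin(q\sin x)$, and show that they differ for every $q\in(0,1)$ and $\vartheta$. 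Should that fail, a direct check via~\eqref{eq:H} at second order in $\delta$ is the fallback.
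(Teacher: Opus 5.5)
\textbf{Parts (a) and (b).} Your parts (a) and (b) follow the paper's proof essentially step by step. There is one point of order in (a). \cref{cor:lancretratio} and the implication (i)$\Rightarrow$(ii) of \cref{thm:D} assume $B\cdot u\neq0$ on the whole domain, whereas (a) only gives $\sgn(B(\wo)\cdot u)=\varepsilon$. So you should first pass to a $\sigma$-invariant neighbourhood $I_0$ on which $\sgn(B\cdot u)=\varepsilon$ (by continuity of $B$), as the paper does, and then apply both results to $\gamma|_{I_0}$. This restriction is also what supplies \eqref{eq:H} with the fixed sign $\varepsilon$, which your use of the equivalence stated before the proposition presupposes.

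\textbf{Part (c): a different route.} The paper does not use (a) here. It inserts $\sin\theta(t)=\kappa d$, $\sin\theta(\sigma(t))=q\kappa d$ and $\sin\varphi(t)=\tau d/\cos\theta(t)$, with $d=\delta(t)$, into \eqref{eq:H}. It reads off $\tau/\kappa=\varepsilon\frac{\qn{2}}{2}\cot\vartheta$ at order $d$ and gets a contradiction from a non-zero coefficient of $d^3$.

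You instead use (a) to get $\theta\circ\sigma=f_\vartheta\circ\theta$ near $\wo$, and combine it with $\sin\theta(\sigma(t))=q\kappa\delta(t)=q\sin\theta(t)$. This gives $\sin f_\vartheta(x)=q\sin x$ on an interval containing $0$. The interval is one-sided if $\wo$ is an endpoint of $I$, but that is harmless: both sides are analytic at $0$, so one-sided agreement already fixes the Taylor coefficients. The Koenigs function is not actually needed for this step.

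The computation you left open does close. From \eqref{eq:Fc},
\[
  f_\vartheta(x)=qx+\Bigl(\frac{3+4q-q^3}{12}+\frac{q(1+q)}{8}\cot^2\vartheta\Bigr)x^3+O(x^5),\qquad
  \arcsin(q\sin x)=qx+\frac{q^3-q}{6}\,x^3+O(x^5).
\]
The cubic coefficients differ by $\frac{1+q}{8}\bigl(2(1+q-q^2)+q\cot^2\vartheta\bigr)>0$. Multiplied by $-a\kappa^3/2$, this is exactly the paper's $d^3$ coefficient $\frac{a\kappa^3(1+q)}{16}\bigl(2(q^2-q-1)-q\cot^2\vartheta\bigr)$.

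\textbf{What each approach buys.} Your version isolates the obstruction as a property of one explicit analytic map: $f_\vartheta$ is not conjugate to $x\mapsto qx$ by $\sin$, i.e.\ $\mathcal K_\vartheta\neq\sin$ near $0$. The price is that it depends on (a) and hence on \cref{cor:lancretratio}. The paper's expansion is self-contained and recovers the value of $\tau/\kappa$ along the way.

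Your stated fallback is mis-aimed. The difference of the two sides of \eqref{eq:H} is odd in $d$, so nothing appears at second order; the first informative order after the linear one is $d^3$.
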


\begin{proof}
(a) By \cref{lem:helix}, $(B(\wo)\cdot u)^2=\sin^2\vartheta>0$. Since $B$ is continuous, $\theta(t)\to0$ (\cref{prop:fixedpoint}(a)) and $\cos\varphi(t)=B(t)\cdot B(\sigma(t))\to1$ as $t\to\wo$, there is a $\sigma$-invariant neighbourhood $I_0=I\cap(\wo-r,\wo+r)$ of $\wo$ on which $\sgn(B\cdot u)=\varepsilon$, $\abs{\varphi}<\pi/2$ and $\theta\in U_\vartheta$. By \cref{thm:D}, applied to $\gamma|_{I_0}$, \eqref{eq:H} holds on $I_0^*:=I_0\setminus\{\wo\}$. Let $\rho_0$ be the constant value of $\tau/\kappa$; by~\eqref{eq:invariants}, $\sin\varphi=\rho_0\tan\theta$, and $\cos\varphi=(1-\rho_0^2\tan^2\theta)^{1/2}$ on $I_0^*$. By \cref{cor:lancretratio}, applied to $\gamma|_{I_0}$, $B$ is differentiable at $\wo$ and $\tau(\wo)/\kappa(\wo)=\rho$; since $\tau(t)\to\tau(\wo)$ and $\kappa(t)\to\kappa(\wo)$ as $t\to\wo$ by \cref{prop:fixedpoint}, we get $\rho_0=\rho$. Hence $\theta(\sigma(t))=f_\vartheta(\theta(t))$ for $t\in I_0^*$, and $G:=(\mathcal K_\vartheta\circ\theta)/\delta$ satisfies $G\circ\sigma=G$ on $I_0^*$ and $G(t)\to\kappa(\wo)$ as $t\to\wo$, because $\mathcal K_\vartheta(x)/x\to1$ and $\theta/\delta\to\kappa(\wo)$. By \cref{lem:JN}(c), applied on $I_0$ to $G$ extended by $G(\wo):=\kappa(\wo)$, we get $G\equiv\kappa(\wo)$, i.e.\ $\theta=\mathcal K_\vartheta^{-1}\circ(\kappa(\wo)\,\delta)$ on $I_0^*$; in particular $\kappa(\wo)\,\delta(I_0)\subset\mathcal K_\vartheta(U_\vartheta)$, since $\theta(I_0^*)\subset U_\vartheta$ and $\mathcal K_\vartheta(0)=0$. The formula for $\varphi$ follows from $\sin\varphi=\rho\tan\theta$ and $\abs{\varphi}<\pi/2$, and the last claim from the uniqueness part of \cref{thm:Bangle}.

(b) By construction, $\theta$ is continuous on $I$, $\theta(\wo)=0$, $\sgn\theta=\sgn\delta$ because $\mathcal K_\vartheta$ is increasing, and $\theta/\delta\to\kappa_0$ because $\mathcal K_\vartheta'(0)=1$. Moreover $\theta(I)\subset U_\vartheta\subset(-x_\vartheta,x_\vartheta)$, so that $\varphi$ is well defined and continuous, with $\abs{\varphi}<\pi/2$ and $\varphi/\delta\to\rho\kappa_0$. By \cref{thm:Bangle} there is a unit-speed Frenet-regular Hahn curve $\gamma$ with turning angle $\theta$ and torsion angle $\varphi$; its invariants satisfy $\kappa(\wo)=\kappa_0$ and $\tau/\kappa=\cos\theta\sin\varphi/\sin\theta=\rho$ on $\Is$. Since $\mathcal K_\vartheta(f_\vartheta(\theta(t)))=q\kappa_0\delta(t)=\kappa_0\delta(\sigma(t))$ and $f_\vartheta(U_\vartheta)\subset U_\vartheta$, we have $\theta(\sigma(t))=f_\vartheta(\theta(t))$, i.e.\ \eqref{eq:H} holds with the sign $\varepsilon$; as $\abs{\theta}<2\vartheta$, \cref{thm:D} shows that $\gamma$ is a generalized Hahn helix with angle $\vartheta$ and an axis $u$ such that $\sgn(B\cdot u)=\varepsilon$. Finally, $\delta\circ h=(\tilde\kappa_0/\kappa_0)\,\delta$, so that by \cref{prop:reparam} the curve $\gamma\circ h$ on $h^{-1}(I)$ has the turning angle $\theta\circ h=\mathcal K_\vartheta^{-1}\circ(\tilde\kappa_0\delta)$, the torsion angle $\varphi\circ h=\arcsin(\rho\tan(\theta\circ h))$ and the Hahn speed $\tilde\kappa_0/\kappa_0$; the last claim follows from the uniqueness part of \cref{thm:Bangle}.

(c) Suppose that $\gamma$ has constant curvature $\kappa$ and constant torsion $\tau$ on $\Is$ and is a generalized Hahn helix with angle $\vartheta\in(0,\pi/2)$ and axis $u$ on a $\sigma$-invariant neighbourhood of $\wo$. As in (a), \eqref{eq:H} then holds, with a fixed sign $\varepsilon$, on $I_0^*$ for a smaller such neighbourhood $I_0$; the values $d:=\delta(t)$, $t\in I_0^*$, contain all sufficiently small $d$ of at least one sign. By~\eqref{eq:invariants}, $\sin\theta(t)=\kappa d$, $\sin\theta(\sigma(t))=q\kappa d$ and $\sin\varphi(t)=\tau d/\cos\theta(t)$, with $\abs{\theta},\abs{\varphi}<\pi/2$ for small $\abs{d}$. Hence
\[
  \tan\tfrac{\theta(t)}{2}=\tfrac{\kappa d}{2}\bigl(1+\tfrac{\kappa^2d^2}{4}\bigr),\enspace
  \tan\tfrac{\theta(\sigma(t))}{2}=\tfrac{q\kappa d}{2}\bigl(1+\tfrac{q^2\kappa^2d^2}{4}\bigr),\enspace
  \sin\varphi(t)=\tau d\bigl(1+\tfrac{\kappa^2d^2}{2}\bigr)
\]
up to $O(d^5)$, and $\cos\varphi(t)=1-\frac{\tau^2d^2}{2}$ and $\bigl(1-a^2\sec^2\frac{\theta(t)}{2}\bigr)^{1/2}=\sin\vartheta\bigl(1-\frac{\kappa^2d^2}{8}\cot^2\vartheta\bigr)$ up to $O(d^4)$, where $a=\cos\vartheta$. Inserting these expansions into~\eqref{eq:H}, the terms of order $d$ give $a\kappa(1+q)/2=\varepsilon\tau\sin\vartheta$ (in particular $\tau\neq0$), and then the difference of the left- and right-hand sides of~\eqref{eq:H} equals
\[
  \frac{a\kappa^3(1+q)}{16}\Bigl(2\bigl(q^2-q-1\bigr)-q\cot^2\vartheta\Bigr)d^3+O(d^5).
\]
Since $q^2-q-1<0$, the coefficient of $d^3$ does not vanish, which contradicts~\eqref{eq:H} for all small $d$ of that sign.
\end{proof}

For uniform steps the situation is different: if $\delta$ is constant, constant $\kappa$ and $\tau$ mean constant angles, \eqref{eq:H} reduces to a single equation for $\vartheta$, and polygons with equal edges and constant turning and non-zero torsion angles are indeed general helices. \Cref{prop:constratio}(c) thus reflects the variation of $\delta$ along the orbits. By (a), which shows that the ratio determines $\vartheta$ and $\varepsilon$, and by (b), near $\wo$ the unit-speed generalized Hahn helices with a given constant ratio $\tau/\kappa$ are unique up to similarities and linear changes of parameter about $\wo$, whereas the classical general helices with a given ratio correspond to arbitrary curvature functions.

\begin{remark}\label{rem:polygonhelices}
Parts (i)--(iii) of \cref{lem:helix} only use the rotation structure~\eqref{eq:rotation} along a single orbit, so they apply equally to the discrete Frenet frames of an arbitrary polygon. For polygons, general helices have been characterized in~\cite{YoonLeeLee2026} by a conserved Frenet-frame vector with free coefficients, by an intrinsic relation between three consecutive turning angles and two consecutive torsion angles that does not involve the helix angle (on the generic branch, where no torsion angle equals $0$ or $\pi$), and by a phase formulation. In the present notation the phase is the polar angle $\psi$ of $(b,c)=\sin\vartheta\,(\cos\psi,\sin\psi)$, and~\eqref{eq:rotbc} reads $\psi(\sigma(t))\equiv\pi-\psi(t)-\varphi(t)\pmod{2\pi}$, which is their phase relation. For polygons the sign of $\sin\psi$, i.e.\ of $B\cdot u$, may change from vertex to vertex; then the turning angles and the helix angle do not determine the helix, and relation~\eqref{eq:H} with a fixed sign $\varepsilon$ does not characterize helices: for given $\theta(t)$ and $\theta(\sigma(t))$ with $\abs{\theta(t)},\abs{\theta(\sigma(t))}<2\vartheta$ it has exactly two solutions $\varphi(t)\in(-\pi,\pi]$, which differ in the sign of $\hat c(t)$ in the proof of \cref{thm:D}, and only one of them also satisfies the second relation in \cref{lem:helix}(iii). In the Hahn setting the sign is fixed by the continuity of the frame on $I$: $\hat c$ is continuous and non-vanishing on each component of $\Is$ and tends to $\varepsilon\sin\vartheta$ at $\wo$ (continuity at $\wo$ alone fixes it only on a tail of each orbit). Continuity at $\wo$ also forces the axes obtained on the different orbits to coincide. These two facts, the resulting uniqueness in \cref{cor:allhelices}, the $q$-deformed Lancret ratio and \cref{prop:constratio} are the content of this section that is specific to the Hahn setting.
\end{remark}

\section{Examples}\label{sec:examples}

Unless stated otherwise, $q=\frac12$ and $\omega=1$, so that $\wo=2$ and $\delta(t)=1-t/2$; when $q$ varies in \cref{ex:sampled,ex:hahnhelix}, we take $\omega=2(1-q)$, so that $\wo=2$ there. All quantities were computed in $60$-digit arithmetic, the Hahn invariants directly from \cref{def:frame} by difference quotients, i.e.\ independently of the formulas they are compared with; infinite products and Jackson--N\"orlund sums were truncated when $q^k\abs{\delta(t)}<10^{-55}$.

\begin{example}\label{ex:circle}
Let $T(t)=(\cos t,\sin t,0)$ on $I=[0,4]$ and $\gamma(t):=p+\int_2^tT(x)\,d_{q,\omega}x$ with $p=(\sin2,-\cos2,0)$, so that $\gamma(2)=\Gamma(2)$ for the classical circle $\Gamma(t)=(\sin t,-\cos t,0)$, which satisfies $\Gamma'=T$. Since $T(\sigma(t))$ is obtained from $T(t)$ by the rotation through $\sigma(t)-t=\delta(t)$ about $e_3$, we have
\begin{gather*}
  \theta(t)=\delta(t)\in[-1,1],\qquad \varphi\equiv0,\qquad B\equiv e_3,\qquad
  \kappa(t)=\frac{\sin\delta(t)}{\delta(t)},\qquad \beta(t)=\frac{\cos\delta(t)-1}{\delta(t)},\qquad \tau\equiv0 .
\end{gather*}
The curve is Frenet-regular, and $\delta^2\kappa^2=\sin^2\delta<1$. \Cref{fig:circle} shows that $\gamma$ is not the circle $\Gamma$: its Hahn tangent $T(t)$ is the direction of the chords of the orbit polygons, not the classical tangent of $\gamma$. In accordance with \cref{prop:tangentfield}, $\kappa(2)=\kappa_\Gamma=1$. The series~\eqref{eq:JN} defining $\gamma$ may be differentiated term by term, since the $k$-th term and each of its derivatives are $O(q^k)$ uniformly on $I$; hence $\gamma$ is a $C^\infty$ curve, and $\gamma'(2)=T(2)$ by \cref{lem:JN}(b), so that its classical speed at $\wo$ is $1$. Differentiating the series twice gives $\gamma''(2)=\frac{2}{\qn{2}}T'(2)$, so that $\gamma'\times\gamma''\neq0$ near $\wo$. Therefore \cref{cor:fixedpoint}, applied to $\gamma$ on a small interval around $\wo$, predicts that the \emph{classical} curvature of $\gamma$ at $\wo$ equals $\kappa(2)/(\qn{2}/2)=4/3$; numerically we obtain $1.33333333333$.
\end{example}

\begin{figure}[H]
\centering
\includegraphics[width=\textwidth]{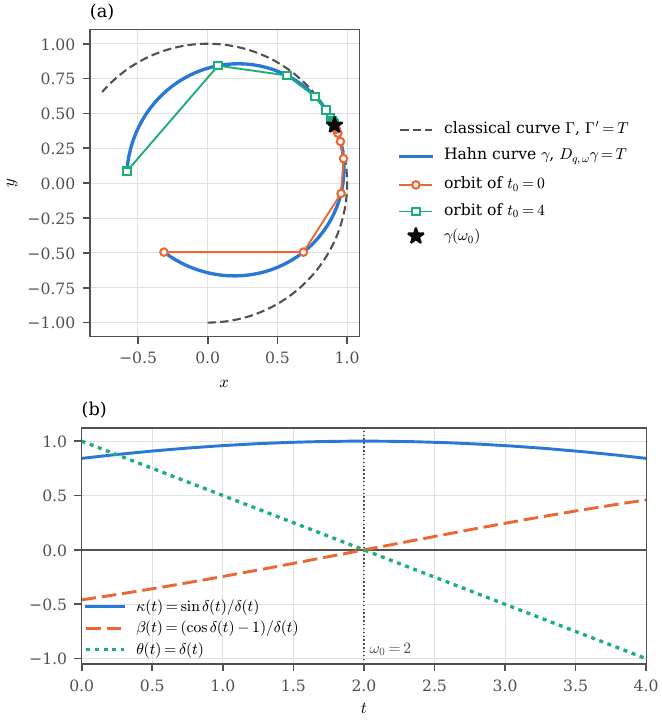}
\caption{\Cref{ex:circle} ($q=\frac12$, $\omega=1$, $\wo=2$). (a) The Hahn curve $\gamma$ on $I=[0,4]$, the classical circle $\Gamma$ with $\Gamma'=T$, and the orbit polygons of $t_0=0$ and $t_0=4$, whose vertices converge to $\gamma(\wo)$; the edges are parallel to $T(\sigma^n(t_0))$. (b) The turning angle $\theta=\delta$ and the invariants $\kappa$ and $\beta$}
\label{fig:circle}
\end{figure}

\begin{example}\label{ex:reconstruction}
Let $\kappa(t)=\frac45+\frac1{10}\sin t$ and $\tau(t)=\frac12\cos\frac t2$ on $I=[0,4]$. Here $\max_I\delta^2(\kappa^2+\tau^2)=0.89<1$, attained at $t=0$, so \cref{thm:B} applies; we take $p=0$ and $F_0=I$. The frame was computed as the product $\Pi(t)^{\top}F_0$ and the curve by the Jackson--N\"orlund sum. Recomputing the Hahn curvature and torsion of the resulting curve from \cref{def:frame} at the ten points $t\in\{0,0.5,1,1.5,1.9,2.1,2.5,3,3.5,4\}$ reproduces $\kappa$ and $\tau$ with errors below $10^{-59}$; see \cref{fig:reconstruction}.
\end{example}

\begin{figure}[H]
\centering
\includegraphics[width=\textwidth]{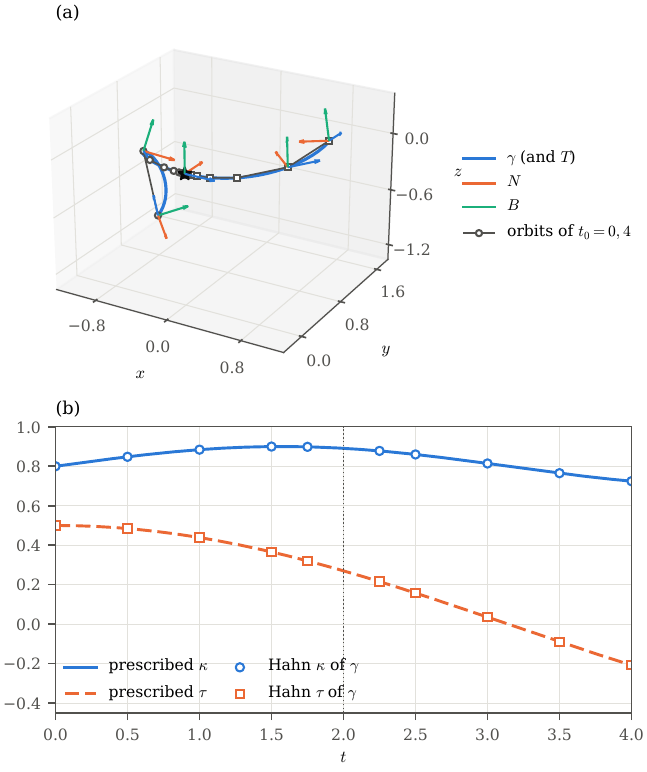}
\caption{\Cref{ex:reconstruction}. (a) The curve reconstructed from $\kappa(t)=\frac45+\frac1{10}\sin t$ and $\tau(t)=\frac12\cos\frac t2$ by \cref{thm:B}, with Hahn frames at $t\in\{0,1,2,3,4\}$ and the orbit polygons of $t_0=0$ and $t_0=4$. (b) The prescribed invariants (lines) and the Hahn invariants recomputed from the curve (markers)}
\label{fig:reconstruction}
\end{figure}

\begin{example}\label{ex:sampled}
(a) Let $\gamma(t)=(\cos t,\sin t,t)$. Then $s_{\cl}=\sqrt2$ and $\kappa_{\cl}=\tau_{\cl}=\frac12$, so $s_{\cl}\kappa_{\cl}=s_{\cl}\tau_{\cl}=\sqrt2/2\approx0.707107$, whereas \cref{cor:fixedpoint} predicts
\[
  \kappa_q(2)=\tfrac34\cdot\tfrac{\sqrt2}{2}=\tfrac{3\sqrt2}{8}\approx0.530330,\qquad \tau_q(2)=\tfrac{7}{12}\cdot\tfrac{\sqrt2}{2}=\tfrac{7\sqrt2}{24}\approx0.412479 .
\]
\Cref{tab:fixedpoint} and \cref{fig:limits}(a) confirm these limits; \cref{tab:classical} shows the classical limit at $t=0$. The reparametrizations $t\mapsto t+t_1$ and $t\mapsto-t$ of the helix are realized by proper rigid motions (a screw motion and the rotation $\operatorname{diag}(1,-1,-1)$). Since the Hahn quantities at $t$ depend only on the points $\gamma(\sigma^k(t))$, $0\le k\le3$, it follows that $\kappa_q(t)$ and $\tau_q(t)$ are even functions of $\delta(t)$ and $\beta_q(t)$ is an odd one; numerically, $\kappa_q(4-t)=\kappa_q(t)$, $\tau_q(4-t)=\tau_q(t)$ and $\beta_q(4-t)=-\beta_q(t)$ hold to more than $55$ digits. Since $\kappa_q$ and $\tau_q$ are moreover real-analytic functions of $t$ near $\wo$ (they are built from divided differences of the analytic curve $\gamma$), the first-order terms in \cref{thm:C} cancel for this curve, and the convergence at the fixed point is of second order (\cref{tab:fixedpoint}).

(b) For the twisted cubic $\gamma(t)=(t,t^2,t^3)$, which has no symmetry about $\wo=2$, \cref{fig:limits}(b) shows the deviation from the limit values of \cref{cor:fixedpoint} as $t\to\wo$ (with $q=\frac12$) and the classical limit at the point $t=\frac12$ as $q\to1$; both are of first order in $\abs{\delta}$. Moreover, $\bigl(\kappa_q(t)-\frac{\qn{2}}{2}(s_{\cl}\kappa_{\cl})(t)\bigr)/\delta(t)\to-0.1136$ and $\bigl(\tau_q(t)-\frac{\qn{3}}{3}(s_{\cl}\tau_{\cl})(t)\bigr)/\delta(t)\to-0.1023$ as $t\to2$, so the order in \cref{thm:C} is sharp in general.
\end{example}

\begin{table}[H]
\centering
\caption{Hahn invariants of the helix $(\cos t,\sin t,t)$ for $q=\frac12$, $\wo=2$ (\cref{ex:sampled}). At $4-t$ the values of $\kappa_q$ and $\tau_q$ are the same and $\beta_q$ changes sign. The last row gives the limits of \cref{cor:fixedpoint}; the classical values are $s_{\cl}\kappa_{\cl}=s_{\cl}\tau_{\cl}=0.707107$}
\label{tab:fixedpoint}
\begin{tabular}{ccccc}
\toprule
$t$ & $\delta(t)$ & $\kappa_q(t)$ & $\tau_q(t)$ & $\beta_q(t)$\\
\midrule
$0$ & $1$ & $0.494366$ & $0.356596$ & $-0.130746$ \\
$1$ & $0.5$ & $0.521006$ & $0.397843$ & $-0.0690540$ \\
$1.5$ & $0.25$ & $0.527978$ & $0.408778$ & $-0.0349982$ \\
$1.75$ & $0.125$ & $0.529741$ & $0.411551$ & $-0.0175583$ \\
$1.875$ & $0.0625$ & $0.530183$ & $0.412247$ & $-0.00878659$ \\
$1.9375$ & $0.03125$ & $0.530293$ & $0.412421$ & $-0.00439422$ \\
\midrule
$\to 2$ & $\to 0$ & $0.530330$ & $0.412479$ & $0$ \\
\bottomrule
\end{tabular}
\end{table}

\begin{table}[H]
\centering
\caption{The classical limit for the helix $(\cos t,\sin t,t)$ at $t=0$, with $\wo=2$ fixed and $q\to1$ (\cref{cor:classical})}
\label{tab:classical}
\begin{tabular}{ccccc}
\toprule
$q$ & $\delta(0)$ & $\kappa_q(0)$ & $\tau_q(0)$ & $\beta_q(0)$\\
\midrule
$0.5$ & $1$ & $0.494366$ & $0.356596$ & $-0.130746$ \\
$0.9$ & $0.2$ & $0.668729$ & $0.632581$ & $-0.0449216$ \\
$0.99$ & $0.02$ & $0.703536$ & $0.699984$ & $-0.00494988$ \\
$0.999$ & $0.002$ & $0.706753$ & $0.706399$ & $-4.99500\times 10^{-4}$ \\
$0.9999$ & $0.0002$ & $0.707071$ & $0.707036$ & $-4.99950\times 10^{-5}$ \\
\midrule
$\to 1$ & $\to 0$ & $0.707107$ & $0.707107$ & $0$ \\
\bottomrule
\end{tabular}
\end{table}

\begin{figure}[H]
\centering
\includegraphics[width=\textwidth]{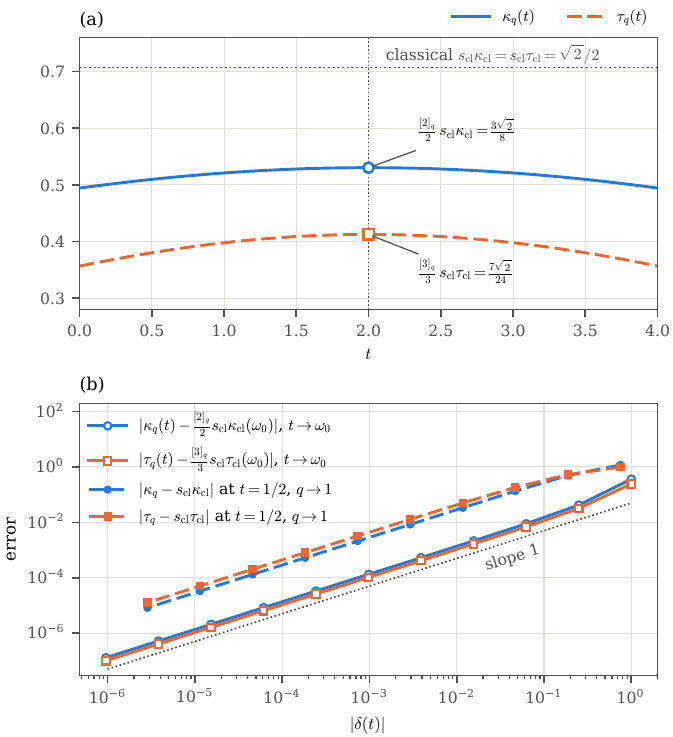}
\caption{\Cref{ex:sampled}. (a) Hahn curvature and torsion of the helix $(\cos t,\sin t,t)$ for $q=\frac12$; at $\wo=2$ they tend to the $q$-deformed values of \cref{cor:fixedpoint} and not to the classical value $\sqrt2/2$. (b) For the twisted cubic $(t,t^2,t^3)$: the deviation of $\kappa_q(t)$ and $\tau_q(t)$ from their limit values $\frac{\qn{2}}{2}(s_{\cl}\kappa_{\cl})(\wo)$ and $\frac{\qn{3}}{3}(s_{\cl}\tau_{\cl})(\wo)$ as $t\to\wo=2$ ($q=\frac12$), and their deviation from $s_{\cl}\kappa_{\cl}$ and $s_{\cl}\tau_{\cl}$ at $t=\frac12$ as $q\to1$ ($\wo=2$), plotted against $\abs{\delta(t)}$; the dotted line has slope $1$}
\label{fig:limits}
\end{figure}

\begin{example}\label{ex:hahnhelix}
Let $T(t)=(\sin\vartheta\cos t,\sin\vartheta\sin t,\cos\vartheta)$ with $\cot\vartheta=\frac12$, on $I=[0,4]$, and let $\gamma$ be the corresponding unit-speed Hahn curve. Since $T\cdot e_3=\cos\vartheta$, $\gamma$ is a generalized Hahn helix with axis $e_3$; here $\varepsilon=+1$, because $B(\wo)$ is the classical binormal of the helix $\Gamma$ with $\Gamma'=T$ and $B(\wo)\cdot e_3=\sin\vartheta>0$. On $I$ we have $\abs{\theta}\le0.886353<2\vartheta\approx2.21430$. \Cref{tab:hahnhelix} lists the angles and invariants; the residual of~\eqref{eq:H} is below $10^{-60}$ at all tabulated points. The ratio $\tau/\kappa$ is not constant: it increases from $0.282361$ at $t=0$ to $\frac{\qn{2}}{2}\cot\vartheta=0.375$ at $\wo$ (\cref{cor:lancretratio}) and stays below the classical Lancret ratio $\cot\vartheta=\frac12$. For the edge-normalized invariants~\eqref{eq:edgeinv}, $\hat\tau/\hat\kappa$ tends at $\wo$ to $\frac{3\qn{2}^2}{4\qn{3}}\cot\vartheta=\frac{27}{56}\approx0.482$, again below $\frac12$ (\cref{cor:lancretratio}); by \cref{rem:helixremarks}(a), this is also the ratio $\tau_{\cl}/\kappa_{\cl}$ of $\gamma$ at $\wo$. By \cref{prop:tangentfield}, $\kappa(2)=\sin\vartheta=2/\sqrt5$ and $\tau(2)=\frac34\cos\vartheta=3/(4\sqrt5)$, in agreement with the last row of \cref{tab:hahnhelix}. \Cref{fig:helix}(b) shows $\tau/\kappa$ for $q\in\{0.3,0.5,0.8\}$, with limits $(1+q)/4$ at $\wo$.

We also tested \cref{cor:allhelices}: starting from the turning angle $\theta(t)=1.1\sin\delta(t)$ (and $\vartheta$, $\varepsilon$ as above), we computed $\varphi$ by~\eqref{eq:phihelix} and the curve by \cref{thm:Bangle} with $F_0=I$; the resulting curve satisfies $T\cdot u=\cos\vartheta$, with $u=a\,e_1+\sin\vartheta\,e_3$, up to an error below $10^{-56}$ at seven points of $I$.
\end{example}

\begin{table}[H]
\centering
\caption{Turning and torsion angles and Hahn invariants of the generalized Hahn helix of \cref{ex:hahnhelix} ($q=\frac12$, $\cot\vartheta=\frac12$). At $4-t$ the angles change sign and $\kappa$, $\tau$ are unchanged. The last row gives the values at $\wo=2$ (\cref{prop:tangentfield,cor:lancretratio})}
\label{tab:hahnhelix}
\begin{tabular}{cccccc}
\toprule
$t$ & $\theta(t)$ & $\varphi(t)$ & $\kappa(t)$ & $\tau(t)$ & $\tau(t)/\kappa(t)$\\
\midrule
$0$ & $0.886353$ & $0.353320$ & $0.774771$ & $0.218765$ & $0.282361$ \\
$1$ & $0.446264$ & $0.169836$ & $0.863196$ & $0.304935$ & $0.353263$ \\
$1.5$ & $0.223490$ & $0.0841156$ & $0.886536$ & $0.327708$ & $0.369650$ \\
$1.75$ & $0.111789$ & $0.0419591$ & $0.892449$ & $0.333479$ & $0.373668$ \\
\midrule
$2$ & $0$ & $0$ & $0.894427$ & $0.335410$ & $0.375000$ \\
\bottomrule
\end{tabular}
\end{table}

\begin{figure}[H]
\centering
\includegraphics[width=\textwidth]{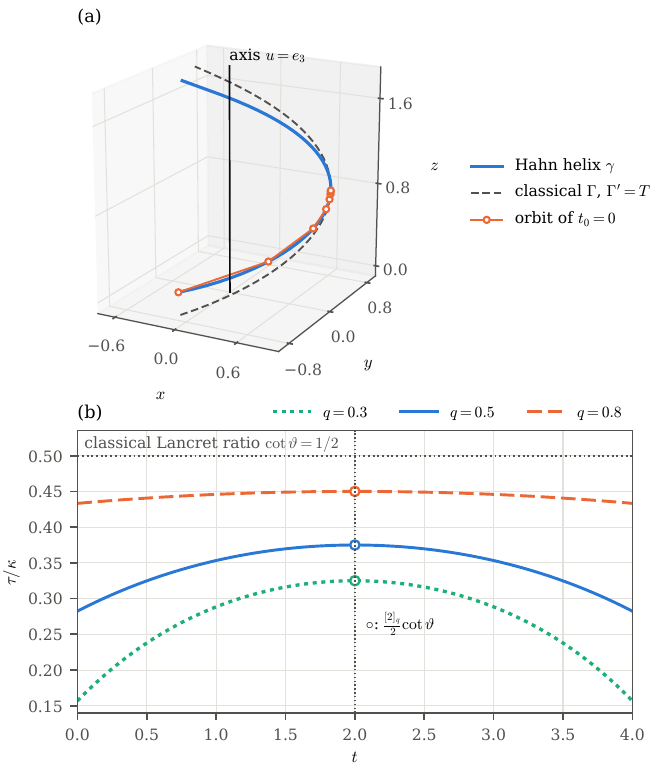}
\caption{\Cref{ex:hahnhelix}. (a) The generalized Hahn helix $\gamma$, the classical helix $\Gamma$ with $\Gamma'=T$, the axis $e_3$ and the orbit polygon of $t_0=0$. (b) The ratio $\tau/\kappa$ for $q\in\{0.3,0.5,0.8\}$ (with $\wo=2$); the circles mark the values $\frac{\qn{2}}{2}\cot\vartheta=(1+q)/4$ of \cref{cor:lancretratio}, and the dotted line the classical Lancret ratio $\cot\vartheta=\frac12$}
\label{fig:helix}
\end{figure}

\begin{example}\label{ex:relaxation}
In this example $\omega=0$, so that $\sigma(u)=qu$ and $\wo=0$ (Jackson's $q$-calculus), and $q$ is determined by the dynamics and the sampling step. Let $p$ be an equilibrium of an analytic system $\dot\xi=f(\xi)$ on an open subset of $\R^3$, and suppose that $f'(p)$ has the eigenvalues $-\lambda_1,-\lambda_2,-\lambda_3$ with $0<\lambda_1<\lambda_2<\lambda_3$ and an eigenvector $v_1$ for $-\lambda_1$, and that there are no resonances, i.e.\ $\lambda_i\neq m_1\lambda_1+m_2\lambda_2+m_3\lambda_3$ for all integers $m_j\ge0$ with $m_1+m_2+m_3\ge2$. Since the eigenvalues lie in the Poincar\'e domain, Poincar\'e's theorem~\cite[Chapter~5]{Arnold1988} yields a ball $V$ about $0$ and a real-analytic diffeomorphism $\mathcal L$ of $V$ onto a neighbourhood of $p$ with $\mathcal L(0)=p$ and $\mathcal L'(0)e_1=v_1$ that maps the solutions of the linear system $\dot\zeta_i=-\lambda_i\zeta_i$ in $V$ to solutions of $\dot\xi=f(\xi)$. Let $\xi$ be a solution with $\xi(t)\to p$ as $t\to\infty$. For large $t$, $\xi(t)$ lies in $\mathcal L(V)$, and then $\xi(t)=\mathcal L(\zeta(t))$ with $\zeta_i(t)=a_ie^{-\lambda_it}$ for constants $a_i$; we assume $a_1\neq0$, which means that $\xi(t)$ converges to $p$ tangentially to $v_1$. With $u:=\zeta_1(t)=a_1e^{-\lambda_1t}$ we obtain $\xi(t)=\gamma(u)$, where
\[
  \gamma(u):=\mathcal L\bigl(u,\,b_2\abs{u}^{\lambda_2/\lambda_1},\,b_3\abs{u}^{\lambda_3/\lambda_1}\bigr),\qquad b_i:=a_i\abs{a_1}^{-\lambda_i/\lambda_1}.
\]
Here $\abs{u}$ is so small that the argument of $\mathcal L$ lies in $V$; for $u$ of the sign of $a_1$, $\gamma$ parametrizes the trajectory of $\xi$, and for the other sign that of the solution with $a_1$ replaced by $-a_1$. For a time step $\Delta t>0$ and $q:=e^{-\lambda_1\Delta t}$, the samples $P_n:=\xi(n\Delta t)$ therefore satisfy $P_n=\gamma(q^na_1)$ for all large $n$; from some index on, they are the vertices of an orbit polygon (\cref{rem:polygons}) of the Hahn curve $\gamma$. The time-$\Delta t$ map of the flow acts on the trajectory by $\gamma(u)\mapsto\gamma(qu)$. Since the signed arc length from $p$ to $\gamma(u)$ is $\abs{v_1}u+o(u)$, the Koenigs coordinate $\lim_{n\to\infty}q^{-n}\Sigma^n$ of the contraction $\Sigma$ that this map induces on the trajectory, written in terms of the arc length as in \cref{prop:coordfree}, is $\abs{v_1}u$. In particular, the Hahn parameter does not depend on $\Delta t$; only $q$ does.

Now let $\lambda_2>4\lambda_1$. Then the functions $u\mapsto\abs{u}^{\lambda_i/\lambda_1}$, $i=2,3$, are of class $C^4$ and have vanishing derivatives up to order four at $0$. Hence $\gamma$ is of class $C^4$ on an open interval around $0$, and it has the same $4$-jet at $0$ as the analytic invariant curve $W(u):=\mathcal L(u,0,0)$, which is tangent to $v_1$. Assume that the curvature $\kappa_W$ of $W$ at $p$ is non-zero, and let $\tau_W$ be its torsion at $p$. Then $\gamma$ satisfies the assumptions of the Setting of \cref{sec:limits} near $0$, with $\kappa_{\cl}(0)=\kappa_W$ and $\tau_{\cl}(0)=\tau_W$, and the edge lengths $\ell_n:=\abs{P_{n+1}-P_n}$ satisfy $\ell_n=\abs{v_1}(1-q)q^n\abs{a_1}\bigl(1+O(q^n)\bigr)$. With the angles $\theta_n$ and $\varphi_n$ of \cref{prop:polygons}, we obtain, with errors $O(q^n)$,
\begin{enumerate}[label=\textup{(\roman*)}]
\item $\ell_{n+1}/\ell_n\to q=e^{-\lambda_1\Delta t}$;
\item $\theta_n/\ell_n\to\frac{\qn{2}}{2}\,\kappa_W$ and $\varphi_n/\ell_n\to\frac{\qn{3}}{3}\,\tau_W$;
\item $2\theta_n/(\ell_n+\ell_{n+1})\to\kappa_W$ and $3\varphi_n/(\ell_n+\ell_{n+1}+\ell_{n+2})\to\tau_W$;
\item $\varphi_n/\theta_n\to\frac{2\qn{3}}{3\qn{2}}\,\tau_W/\kappa_W$.
\end{enumerate}
Indeed, (iii) is \cref{prop:polygons} for $x_n=q^na_1$ and $x^*=0$; (ii) follows from (i) and (iii), and (iv) from the formula for $\varphi_n/\theta_n$ in \cref{prop:polygons} and (i), since $(\ell_n+\ell_{n+1}+\ell_{n+2})/(\ell_n+\ell_{n+1})=\qn{3}/\qn{2}+O(q^n)$. By \cref{prop:reparam} and the relation between the angles stated before \cref{prop:polygons}, for either sign of $a_1$, $\sin\theta_n/\ell_n$ and $\cos\theta_n\sin\varphi_n/\ell_n$ are the quotients $\kappa/s$ and $\tau/s$ of the Hahn curve $\gamma$ at $q^na_1$, which depend only on the orbit polygons. They differ from the quotients in (ii) by factors $1+O(q^{2n})$, and the limits in (ii) are those given by \cref{cor:fixedpoint}, since the Hahn speed tends to $s_{\cl}(0)$. Thus a single sampled trajectory determines $\lambda_1$, $\kappa_W$ and $\tau_W$ through (i) and (iii), whereas the quotients by one edge length in (ii) and the ratio of the angles in (iv), which involves no normalization at all, remain biased by the factors of \cref{cor:fixedpoint} and by their quotient, respectively, however many samples near $p$ are used; the bias disappears only as $\Delta t\to0$. Consistent estimators of $\kappa_W$ and $\tau_W$ are available without Hahn calculus, for instance those in (iii) or, for the curvature, the reciprocal radius of the circle through three consecutive vertices (see~\cite{Sullivan2008} and the discussion before~\eqref{eq:edgeinv}). What the Hahn setting adds is the structure of the samples, with the linearizing coordinate as Hahn parameter, and the exact asymptotic bias in (ii) and (iv). If the vertices carry errors of size $\epsilon$, then $\theta_n$ and $\varphi_n$ change by $O(\epsilon/\ell_n)$ and $O(\epsilon/(\theta_n\ell_n))$, so that the relative errors of the estimates in (iii) are of order $\epsilon/(\kappa_W\ell_n^2)$ and $\epsilon/(\kappa_W\tau_W\ell_n^3)$, whereas their deviations from the limits decay at least like $\ell_n$; with noisy data, only a window of indices $n$ is useful, and it is narrower for the torsion.

For the Lorenz system~\cite{Lorenz1963} with the parameters $10$, $\frac12$ and $\frac83$,
\begin{equation}\label{eq:lorenz}
  \dot x=10(y-x),\qquad \dot y=\tfrac12x-y-xz,\qquad \dot z=xy-\tfrac83z,
\end{equation}
the origin is a stable node with $\lambda_1=(11-\sqrt{101})/2\approx0.475062$, $\lambda_2=\frac83$, $\lambda_3=(11+\sqrt{101})/2\approx10.5249$ and $v_1=(1,c_1,0)$, $c_1:=1-\lambda_1/10$; moreover $\lambda_2/\lambda_1\approx5.61>4$. There are no resonances: since $\lambda_2$ and $\lambda_1+\lambda_3=11$ are rational and $\lambda_1$ is irrational, a relation $\lambda_i=m_1\lambda_1+m_2\lambda_2+m_3\lambda_3$ with integers $m_j\ge0$ forces $m_1-m_3=1,0,-1$ for $i=1,2,3$, respectively, and then, by comparison of the rational parts, $m_i=1$ and $m_j=0$ for $j\neq i$. Near $0$, $W$ is the graph of a function of $x$. The invariance equations determine its Taylor coefficients recursively, the coefficients of $x^m$ in $y$ and in $z$ having the denominators $\lambda_3-m\lambda_1$ and $\lambda_2-m\lambda_1$, respectively, which do not vanish because there are no resonances; they show that $y$ is an odd and $z$ an even function of $x$ along $W$, and they give $y=c_1x+c_3x^3+O(x^5)$ and $z=d_2x^2+O(x^4)$ with $d_2=c_1/(\lambda_2-2\lambda_1)$ and $c_3=-d_2/(\lambda_3-3\lambda_1)$. Hence
\begin{gather*}
  \kappa_W=\frac{2d_2}{1+c_1^2}\approx0.581877,\qquad \tau_W=-\frac{3c_3}{d_2(1+c_1^2)}\approx0.172856,\\
  \frac{\tau_W}{\kappa_W}\approx0.297067.
\end{gather*}
We computed the solution $\xi$ of~\eqref{eq:lorenz} with $\xi(0)=(2,-1,\frac32)$ by the Taylor series method of order $50$ with step $\frac1{40}$; halving the step changes the computed points by less than $10^{-55}$ relative to their distance from $0$. The computed solution converges to $0$ tangentially to $v_1$, so that $a_1\neq0$. We sampled $\xi$ with $\Delta t=1$ and $\Delta t=\frac14$, for which $q=e^{-\lambda_1}\approx0.621846$ and $q=e^{-\lambda_1/4}\approx0.888016$. \Cref{tab:lorenz} lists the quotients (i)--(iv) at the time $n\Delta t=30$, i.e.\ at $n=30$ for $\Delta t=1$ and at $n=120$ for $\Delta t=\frac14$. For $n\ge25$ ($\Delta t=1$) and $n\ge100$ ($\Delta t=\frac14$), all of them agree with their limits to nine significant digits, and at the tabulated $n$ the quotient $-\log(\ell_{n+1}/\ell_n)/\Delta t$ equals $\lambda_1$ to twelve digits. In fact, the deviations from the limits decay like $q^{2n}$. The rotation $(x,y,z)\mapsto(-x,-y,z)$ leaves~\eqref{eq:lorenz} and $W$ invariant and maps the polygon with vertices $W(q^ku)$ onto that with vertices $W(-q^ku)$, so that for $W$ the quotients are even analytic functions of $u$; compare \cref{ex:sampled}(a). The terms $b_i\abs{u}^{\lambda_i/\lambda_1}$ change them only by $O(q^{(\lambda_2/\lambda_1-3)n})$, and $\lambda_2/\lambda_1-3>2$. For $\Delta t=1$, the quotients in (ii) and (iv) underestimate $\kappa_W$, $\tau_W$ and $\tau_W/\kappa_W$ by $18.9\%$, $33.0\%$ and $17.4\%$, in accordance with $\frac{\qn{2}}{2}\approx0.810923$, $\frac{\qn{3}}{3}\approx0.669513$ and $\frac{2\qn{3}}{3\qn{2}}\approx0.825618$, and for $\Delta t=\frac14$ still by $5.6\%$, $10.8\%$ and $5.5\%$.
\end{example}

\begin{table}[H]
\centering
\caption{Quotients of the edge lengths and angles of the samples $P_n=\xi(n\Delta t)$ of the solution of~\eqref{eq:lorenz} in \cref{ex:relaxation} at the time $n\Delta t=30$, and their limits (i)--(iv), where $q=e^{-\lambda_1\Delta t}$, $\kappa_W=0.581877$ and $\tau_W=0.172856$. The percentages are the deviations from $\kappa_W$, $\tau_W$ and $\tau_W/\kappa_W$, respectively}
\label{tab:lorenz}
\begin{tabular}{llll}
\toprule
quotient & limit & $\Delta t=1$, $n=30$ & $\Delta t=\frac14$, $n=120$\\
\midrule
$\ell_{n+1}/\ell_n$ & $q=e^{-\lambda_1\Delta t}$ & $0.621846$ & $0.888016$ \\
$\theta_n/\ell_n$ & $\frac{\qn{2}}{2}\,\kappa_W$ & $0.471858$ ($-18.9\%$) & $0.549297$ ($-5.6\%$) \\
$2\theta_n/(\ell_n+\ell_{n+1})$ & $\kappa_W$ & $0.581877$ & $0.581877$ \\
$\varphi_n/\ell_n$ & $\frac{\qn{3}}{3}\,\tau_W$ & $0.115730$ ($-33.0\%$) & $0.154222$ ($-10.8\%$) \\
$3\varphi_n/(\ell_n+\ell_{n+1}+\ell_{n+2})$ & $\tau_W$ & $0.172856$ & $0.172856$ \\
$\varphi_n/\theta_n$ & $\frac{2\qn{3}}{3\qn{2}}\,\frac{\tau_W}{\kappa_W}$ & $0.245264$ ($-17.4\%$) & $0.280762$ ($-5.5\%$) \\
\bottomrule
\end{tabular}
\end{table}

\section{Concluding remarks}\label{sec:conclusion}

We have shown that the Frenet theory of curves in Hahn calculus combines two structures. Along each orbit of $\sigma$ it is the theory of discrete Frenet frames of the orbit polygons, with frames transported by explicit rotations (\cref{thm:A}). At the fixed point the structure becomes infinitesimal, and continuity there couples all orbits, which yields existence and uniqueness (\cref{thm:B,thm:Bangle}), with convergence to the classical curves as $q\to1$ (\cref{prop:qto1}), and a sharp planarity criterion (\cref{thm:planar}). The parametrization is part of this structure rather than a gauge freedom: for a regular Hahn curve that is a topological embedding, it is determined, up to a linear change of parameter about $\wo$, by the trace and the contraction that $\sigma$ induces on it (\cref{prop:reparam,prop:coordfree,rem:coordfree}). The interplay of the two structures produces phenomena without counterpart for uniform steps. At the fixed point, curvature and torsion carry the factors $\qn{2}/2$ and $\qn{3}/3$ relative to the classical invariants (\cref{thm:C}, \cref{cor:fixedpoint}); they are produced by the geometric decay of the edges and disappear under normalization by edge lengths, but their quotient persists in the limiting ratio of torsion and turning angles of every polygon inscribed in a smooth curve with monotonically converging vertices and edge lengths decaying with ratio $q$ (\cref{cor:edge,prop:polygons}). The Lancret ratio of generalized Hahn helices is $q$-deformed, also after edge normalization, and a constant ratio $\tau/\kappa$ does not characterize helices (\cref{thm:D,cor:lancretratio,prop:constratio}). Under the assumptions of \cref{ex:relaxation}, Hahn curves also arise when a trajectory relaxing towards a stable node is sampled at uniform time steps, and the factors $\qn{2}/2$ and $\qn{3}/3$ become measurable biases of discrete estimates of curvature and torsion.

Several directions remain open. (i) Rotation-minimizing (Bishop) frames~\cite{Bishop1975} have a well-developed discrete counterpart~\cite{WangJuttlerZhengLiu2008}; a Hahn version would remove the condition $T\times DT\neq0$ and should lead to Hahn analogues of slant helices and of Bertrand and Mannheim pairs. (ii) Surfaces can be approached with the two-parameter Hahn calculus~\cite{HamzaMakharesh2016} (cf.\ the partial differentiation on time scales in~\cite{BohnerGuseinov2004}); the Gauss--Codazzi equations should then appear as compatibility conditions for step rotations in two directions. (iii) For curves in $\R^n$, a Hahn Frenet frame $(E_1,\dots,E_n)$ is obtained by the Gram--Schmidt process from $D\gamma,\dots,D^{n-1}\gamma$, completed to a positively oriented frame, with Hahn curvatures $\kappa_j:=DE_j\cdot E_{j+1}$; for $n=3$ this gives $(T,N,B)$, $\kappa$ and $m_{23}$. Since the span of $E_1(t),\dots,E_j(t)$ is determined by the $j+1$ points $\gamma(\sigma^i(t))$, $0\le i\le j$, and should approximate the classical osculating $j$-space at the centroid of their parameters, as it does for $j=1,2$ by \cref{prop:frames}, the argument of \cref{rem:qnumbers} leads to the following conjecture: for a smooth curve whose derivatives $\gamma',\dots,\gamma^{(n-1)}$ are linearly independent, the $j$-th Hahn curvature tends at $\wo$ to $\frac{\qn{j+1}}{j+1}\,s_{\cl}(\wo)\kappa_{j,\cl}(\wo)$, where $\kappa_{j,\cl}$ is the $j$-th classical curvature, $1\le j\le n-1$. For $n=3$ this follows from \cref{cor:fixedpoint,prop:fixedpoint}. For $n=4$, numerical experiments in $160$-digit arithmetic agree with the conjecture: for the curves $(t,t^2/2,t^3/6,t^4/24+t^2/5)$ with $\wo=0.7$ and $(\sin t,\cos 2t,e^{t/2},t^3/3)$ with $\wo\in\{0.3,-1.2\}$, and for $q\in\{0.3,0.5,0.8\}$, the Hahn curvatures at $t=\wo\pm10^{-8}$ and $t=\wo\pm10^{-16}$ differ from the conjectured limits by less than $5\abs{t-\wo}$. (iv) \Cref{thm:A,thm:B} use only the product rule, the identity $f(\sigma(t))=f(t)+\delta(t)Df(t)$ and the contraction of $\sigma$ to its fixed point, so they should extend to the general quantum difference operators of~\cite{HamzaSarhanShehataAldwoah2015}; it would be interesting to determine the analogues of the factors $\qn{k}/k$. (v) Curve flows on Hahn grids, in the spirit of~\cite{InoguchiKajiwaraMatsuuraOhta2012}, may lead to integrable equations on geometric lattices. (vi) By \cref{prop:coordfree}, a contraction $\Phi$ of $\R^3$ with fixed point $p$ that leaves an arc through $p$ invariant, and satisfies the assumptions of that proposition on it, defines a Hahn curve whose orbit polygons are orbits of $\Phi$. Whether this curve is Frenet-regular at $\wo$ depends on $\Phi$. For a diagonal linear contraction with eigenvalues $1>\mu_1>\mu_2>\mu_3>0$ (on arcs tangent to $e_1$ this requires $q=\mu_1$), an invariant $C^2$ arc tangent to $e_1$ has non-zero curvature at $p=0$ only if $\mu_2=\mu_1^2$ or $\mu_3=\mu_1^2$, since along an orbit the ratios of the $e_2$- and $e_3$-components to the square of the $e_1$-component behave like $(\mu_2/\mu_1^2)^n$ and $(\mu_3/\mu_1^2)^n$. Non-linear terms change this: for $\Phi(x,y,z)=(\mu_1x,\mu_2y+ax^2,\mu_3z)$ with $a\neq0$ and $\mu_2,\mu_3\neq\mu_1^2$, every invariant $C^2$ arc tangent to $e_1$ has curvature $2\abs{a}/\abs{\mu_1^2-\mu_2}$ at $0$. In \cref{ex:relaxation}, $\Phi$ is the time-$\Delta t$ map of a flow near a stable node, with $\mu_i=e^{-\lambda_i\Delta t}$, and the conditions $\lambda_2>4\lambda_1$ and $\kappa_W\neq0$ make the induced Hahn curves Frenet-regular near $\wo$. It would be interesting to characterize the contractions that induce Frenet-regular Hahn curves, and to develop a Hahn Frenet theory that admits singularities at $\wo$. Relaxation towards stable equilibria produces such singularities. If $\lambda_2<2\lambda_1$ in \cref{ex:relaxation}, the trajectories with $b_2\neq0$ have curvature of order $\abs{u}^{\lambda_2/\lambda_1-2}$ near $p$. At a node with the resonance $\lambda_2=2\lambda_1$, the normal form $\dot\zeta_1=-\lambda_1\zeta_1$, $\dot\zeta_2=-2\lambda_1\zeta_2+c\zeta_1^2$ in the plane~\cite[Chapter~5]{Arnold1988} has, besides the $\zeta_2$-axis, the orbits $\zeta_2=\zeta_1^2\bigl(C_0-(c/\lambda_1)\ln\abs{\zeta_1}\bigr)$, $C_0\in\R$; if $c\neq0$, their curvature diverges like $(2\abs{c}/\lambda_1)\abs{\ln\abs{\zeta_1}}$ at the node. At a stable focus, finally, the trajectories are spirals without a tangent at the focus, whose curvature is of the order of the inverse distance to the focus, so that already the Hahn tangent is singular at $\wo$.

    \section*{Declarations}

        \begin{itemize}
        \item Funding: No funding was received for this study.
        \item Conflict of interest: The authors declare that there is no conflict of interest regarding the publication of this article.
        \item Ethics approval and consent to participate: Not applicable.
        \item Consent for publication: Not applicable.
        \item Data availability: No data were used or generated in the preparation of this article. All results are purely mathematical and self-contained within the paper. 
        \item Materials availability: Not applicable.
        \item Code availability: Not applicable.
        \item Author contribution: \textbf{Umut Selvi:} Conceptualization, Formal analysis, Methodology, Writing -- original draft, Writing -- review \& editing. \textbf{Derya Sağlam:} Conceptualization, Formal analysis, Methodology, Writing -- original draft, Writing -- review \& editing.
        \end{itemize}

\end{document}